\documentclass{amsart} 
\usepackage{amsmath,amsfonts,amsthm,amssymb,mathrsfs,latexsym,paralist, xypic}
\usepackage{enumitem}   
\usepackage{upref}      
\usepackage{stmaryrd}   
\usepackage[normalem]{ulem} 
\usepackage{tikz}
	\usetikzlibrary{arrows,automata}
	\tikzstyle{V}=[fill=black,circle,scale=0.4, outer sep = 4pt]
\usepackage[
]{hyperref}

\newtheorem{thm}{Theorem}[section]
\newtheorem{prop}[thm]{Proposition}
\newtheorem{cor}[thm]{Corollary}
\newtheorem{lemma}[thm]{Lemma}

\theoremstyle{remark}
\newtheorem{rmk}[thm]{Remark}

\theoremstyle{definition}
\newtheorem{defn}[thm]{Definition}

\DeclareMathOperator{\supp}{supp}

\DeclareMathOperator{\dom}{dom}

\newcommand{\z}{^{(0)}}
\renewcommand{\2}{^{(2)}}
\newcommand{\inv}{^{-1}}

\newcommand{\bi}{\begin{itemize}}
\newcommand{\ei}{\end{itemize}}
\newcommand{\be}{\begin{enumerate}}
\newcommand{\ee}{\end{enumerate}}

\newcommand{\CC}{\mathbb{C}}

\newcommand{\Iso}{\operatorname{Iso}}
\newcommand{\norm}[1]{\left\|#1\right\|}  
\newcommand{\abs}[1]{\left\vert #1 \right\vert}

\date{\today}

\let\mbb\mathbb
\let\mf\mathfrak
\let\mc\mathcal
\let\sset\subseteq

\newcommand{\gpd}{\mc{G}} 
\newcommand{\agpd}{\mc{S}} 
\newcommand{\qgpd}{\mc{Q}}	
\newcommand{\specB}{\mf{B}} 
\newcommand{\qsec}{\mf{s}}	
\newcommand{\WeylGpdElement}{\Gamma}

\hyphenation{grou-poid Car-tan ho-mo-mor-phism}

\begin{document}

\title{Analyzing the Weyl construction for dynamical Cartan subalgebras}

\author{A.\ Duwenig}
\address[A.\ Duwenig]{School of Mathematics and Applied Statistics, University of Wollongong, Wollongong, Australia}
\email{aduwenig@uow.edu.au}

\author{E.\ Gillaspy}
\address[E.\ Gillaspy]{Department of Mathematical Sciences, University of Montana, 32 Campus Drive \#0864, Missoula, MT 59812, USA}
\email{elizabeth.gillaspy@mso.umt.edu}

\author{R.\ Norton}
\address[R.\ Norton]{Mathematics Department, Fitchburg State University, 160 Pearl Street,
Fitchburg, MA 01420, USA}
\email{rnorton9@fitchburgstate.edu}

\maketitle
\begin{abstract}
    
When the reduced twisted $C^*$-algebra $C^*_r(\gpd, c)$ of a non-principal groupoid $\gpd$ admits a Cartan subalgebra, Renault's work on Cartan subalgebras implies the existence of another groupoid description of $C^*_r(\gpd, c)$.  In an earlier paper, joint  with Reznikoff and Wright, we identified situations where such a Cartan subalgebra arises from a subgroupoid $\agpd$ of $\gpd$.
In this paper, we study the relationship between the original groupoids $\agpd, \gpd$ and the Weyl groupoid and twist associated to the Cartan pair. We first identify the spectrum $\specB$ of the Cartan subalgebra $C^*_r(\agpd, c)$. We then show that the quotient groupoid $\gpd/\agpd$ acts on $\specB$, and that the corresponding action groupoid is exactly the Weyl groupoid of the Cartan pair. Lastly we show that, if the quotient map $\gpd\to\gpd/\agpd$ admits a continuous section, then the Weyl twist is also given by an explicit continuous $2$-cocycle on $\gpd/\agpd \ltimes \specB$.
\end{abstract}

\setcounter{tocdepth}{1} 

~

\tableofcontents

\pagebreak[3]\section{Introduction}

One of the earliest theorems about $C^{*}$-algebras, the Gelfand--Naimark Theorem, establishes that any commutative $C^{*}$-algebra is of the form $C_0(X)$ for a locally compact Hausdorff space $X$. 
In addition to inspiring the ``noncommutative topology'' approach to $C^{*}$-algebras, the Gelfand--Naimark Theorem  has also led researchers to search for Abelian subalgebras inside noncommutative $C^{*}$-algebras, with the goal of using topological tools to analyze the Abelian
subalgebra, and from there
to obtain a better understanding of its noncommutative host.
This program has been particularly successful when the subalgebra
is
{\em Cartan} (see Definition \ref{def:cartan} below);
it has
enabled progress on
Elliott's classification program for $C^{*}$-algebras \cite{barlak-li, li-cartan-existence} as well as the theory of  continuous orbit equivalence of topological dynamical systems \cite{matsumoto-matui, brownlowe-carlsen-whittaker}.
Even beyond the setting of Cartan subalgebras,
many authors (cf.~\cite{ brownlowe-carlsen-whittaker,BFPR:graded-and-twisted,IKRSW:2020}) have successfully extended structural information from more general Abelian subalgebras to the containing $C^{*}$-algebras.

In this paper, we focus our attention on certain Cartan subalgebras which appear in a rather unexpected context.
 Renault proved in \cite{renault-cartan}, building on earlier work of Kumjian \cite{Kumjian:diag}, that if a $C^{*}$-algebra $A$
admits a Cartan subalgebra, then $A$ 
is isomorphic to the reduced $C^{*}$-algebra $C^{*}_r( G, \Sigma)$ of a twist $\Sigma$ over
a groupoid ${G}$,  and the Cartan subalgebra is realized as $C_0({G}\z).$ 
The  groupoids $G$ appearing in Renault's analysis must satisfy a number of structural contraints;  for example, they are always
topologically principal.
If ${G}$ is not topologically principal, then $C_0({G}\z)$ is not a Cartan subalgebra inside $C^{*}_r({G} , \Sigma )$ for any twist $\Sigma$ over ${G} $.
Nevertheless, there are many such groupoids whose twisted $C^{*}$-algebras contain Cartan subalgebras.
Examples include the rotation algebras $A_\theta \cong C^{*}_r(\mbb{Z}^2, c_\theta)$ and the $C^{*}$-algebras of directed graphs which do not satisfy Condition (L).

Together with Reznikoff and Wright, in \cite[Theorem 3.1]{DGNRW:Cartan} we identified a large family of twisted groupoid $C^{*}$-algebras,
associated to non-principal groupoids $\mc G$, which contain Cartan subalgebras. Moreover, these Cartan subalgebras are evident at the level of the groupoid $\mc G$: they arise from a subgroupoid $\mc S$ of $\mc G$.
The existence of a Cartan subalgebra in $C^{*}_r(\mc{G} , c)$ implies, by \cite{renault-cartan}, the existence of a topologically principal groupoid $G$, the so-called {\em Weyl groupoid,}
and a twist $\Sigma$ over $G$ such that $C^{*}_r(\mc{G} , c) \cong C^{*}_r(G, \Sigma ).$ If $\mc{G} $ is a discrete group, and $\mc{S} \leq \mc{G} $ satisfies the hypotheses of \cite[Theorem 3.1]{DGNRW:Cartan} so that $C^{*}_r(\mc{S}, c)$ is a Cartan subalgebra of $C^{*}_r(\mc{G} , c)$, then \cite[Theorem 5.2]{DGNRW:Cartan} establishes that 
\[ G =  (\mc{G} /\mc{S}) \ltimes \widehat{\mc{S}}\]
as long as the action of $\mc{G} /\mc{S}$ on $\widehat{\mc{S}}$ is topologically free.  Moreover, \cite[Theorem 5.8]{DGNRW:Cartan} shows that in this case, the twist $\Sigma$ arises from a 2-cocycle 
on $G$, which we described explicitly in \cite[Lemma 5.6]{DGNRW:Cartan}. 

The preprint \cite{IKRSW:2020} came to our attention as we were finalizing \cite{DGNRW:Cartan}, and we were struck by the structural
parallels between the two papers' main results.  
In  \cite[Theorem 3.4]{IKRSW:2020}, the authors show that if a subgroupoid $\mc{A} $  of a groupoid $\mc{G} $ consists of a closed normal bundle of Abelian groups, then 
\[C^{*}_r(\mc{G}) \cong C^{*}_r( \widehat{\mc{A} } \rtimes (\mc{G}/\mc{A}) , \Sigma ).\]
However, they only establish that $C^{*}_r(\mc{A} )$ is Cartan in $C^{*}_r(\mc{G} )$ 
if $\mc{G} $ is {\'e}tale and $\mc{G} /\mc{A} $ is topologically principal \cite[Theorem 5.6]{IKRSW:2020}, 
and they do not analyze the structure of twisted groupoid $C^{*}$-algebras $C^{*}_r(\mc G, c).$
As non-trivial discrete groups are 
never topologically principal, this  excludes the setting of \cite[Theorem 5.8]{DGNRW:Cartan}. Moreover,  the formula given in \cite{IKRSW:2020} for the twist $\Sigma $ is not 
explicit.  In particular, it is unclear when, or whether, the twist $\Sigma$ can be realized via a 2-cocycle on the groupoid $\widehat{\mc{A} } \rtimes (\mc{G} /\mc{A} ).$

In this paper, we bridge the gap between \cite{DGNRW:Cartan} and \cite{IKRSW:2020}.  Our first main result, Theorem \ref{thm:varphi}, establishes 
that when a subgroupoid $\agpd$ of an {\'e}tale groupoid $\gpd$ satisfies the hypotheses of \cite[Theorem 3.1]{DGNRW:Cartan}, so
 that $C^{*}_r(\agpd, c)$ is Cartan in
 $C^{*}_r(\gpd , c)$, then the Weyl groupoid associated to the Cartan pair
$ \left( C^{*}_r(\gpd , c), C^{*}_r(\agpd , c)\right)$ is an action groupoid
\[  (\gpd /\agpd)  \ltimes \specB,\]
where $\specB$ denotes the spectrum of the commutative algebra $C^{*}_r(\agpd , c)$.  When the 2-cocycle $c$ is trivial,    
$\specB $ agrees with the space $ \widehat{\agpd }$ of  \cite{IKRSW:2020}, and (translating our left action of $\gpd /\agpd $ into a right action) we see that our groupoid $(\gpd/\agpd) \ltimes \specB$ agrees with the groupoid $\widehat{\agpd } \rtimes (\gpd /\agpd) $ of \cite{IKRSW:2020}; see Remark \ref{rmk:comparison-with-IKRSW}. 
Theorem \ref{thm:varphi} is a substantial improvement over \cite[Theorem 5.2]{DGNRW:Cartan}.  Not only do we extend \cite[Theorem 5.2]{DGNRW:Cartan} from  groups to groupoids, we
also
show that the hypothesis of topological freeness in the latter
theorem is always satisfied.

Our second main result, Theorem \ref{thm:psi}, shows 
in particular
that if the quotient map $q\colon \gpd \to \gpd/\agpd$
admits a continuous section $\qsec$, then the Weyl twist $\Sigma$ of the Cartan pair $\left( C^{*}_r(\gpd , c), C^{*}_r(\agpd , c)\right)$  is given by a continuous 2-cocycle $C^\qsec$ on $(\gpd/\agpd) \ltimes \specB$.  The formula for $C^\qsec$ is given in Proposition \ref{prop:def:cocyle-for-Weyl}.
  Our precise description 
  of the twist $\Sigma$ represents both an extension of \cite[Theorem 5.8]{DGNRW:Cartan} to a broader setting 
  and an improvement on \cite[Theorem 3.4]{IKRSW:2020} in the
  {\'e}tale case.

A detailed analysis of the Weyl groupoid  underlies our results in this paper, 
and we expect that some of the technical results we have obtained, such as Proposition \ref{prop:Weyl-gpd-similar-to-twist} and Corollary \ref{cor:specB-is-spectrum}, will also be of general interest.
Proposition \ref{prop:Weyl-gpd-similar-to-twist} 
gives a description of the equivalence relation underlying the Weyl groupoid which, to our knowledge, has not appeared before in the literature.
Corollary~\ref{cor:specB-is-spectrum} 
describes the spectrum of the twisted groupoid $C^{*}$-algebra $C^{*}_r(\agpd , c)$ if $\agpd $ is a  bundle of Abelian groups 
and $c$ is symmetric on $\agpd$.
This description may
be known to experts -- indeed, it is similar to results such as \cite[Corollary 3.4]{MRW96}, \cite[Proposition 5]{Goehle:gp-bdl-duality}, and  \cite[Remark 5.2]{crytser-nagy} -- but we were unable to locate a reference  in the literature for the precise result we needed.

This paper is organized as follows.  We recall the relevant definitions of groupoids and Cartan subalgebras in Section \ref{sec:prelim}, and we
establish Proposition \ref{prop:Weyl-gpd-similar-to-twist}.  The first step in providing an explicit description of the Weyl groupoid associated to a Cartan pair $(A, B)$ is identifying the topological space $\widehat B$;  Section \ref{sec:abelian-gp-bdl} is devoted to describing $\widehat B$ in the case when $B = C^{*}_r(\agpd , c)$ arises from a bundle of discrete Abelian groups.

Section \ref{sec:weyl-paper2} contains our first main result.
We show in Theorem \ref{thm:varphi} that if $(A, B) = ( C^{*}_r(\gpd , c), C^{*}_r(\agpd, c))$ is one of the Cartan pairs identified by \cite[Theorem 3.1]{DGNRW:Cartan}, then its Weyl groupoid is   an action groupoid, arising from an action of the quotient groupoid $\gpd /\agpd$ on the spectrum $\widehat{B}\cong\specB$.
Given a section $\qsec \colon \gpd /\agpd \to \gpd $ of the quotient map, we describe in Section \ref{sec:cocycle} a 2-cocycle $C^{\qsec}$ on the action groupoid $(\gpd /\agpd )\ltimes \specB.$  When $\qsec$ is continuous, we prove in Theorem \ref{thm:psi} that $C^{\qsec}$ gives the Weyl twist over $(\gpd /\agpd) \ltimes \specB$, so that  $C^{*}_r(\gpd , c) \cong C^{*}_r((\gpd / \agpd) \ltimes \specB, C^{\qsec})$.

\subsection*{Acknowledgments}

The research presented here grew out of the work \cite{DGNRW:Cartan} which was begun during the authors' participation in the 2018 BIRS workshop ``Women in Operator Algebras'' and continued during a 2019 stay at the Mathematical Sciences Research Institute.  The latter stay was supported by the National Security Agency under Grant No.\ H98230-19-1-0119, The Lyda Hill Foundation, The McGovern Foundation, and Microsoft Research.

The first-named author would further like to thank Aidan Sims and Chris Bruce for helpful discussions, and the Department of Mathematics and Statistics at the University of Victoria for its hospitality. 

\pagebreak[3]\section{Preliminaries on Cartan subalgebras and the Weyl construction}
\label{sec:prelim}

Intuitively, a {\em groupoid} $\gpd $ is a generalization of a group in which multiplication is only partially defined.  More precisely, a groupoid is a set $\gpd $, together with a subset $\gpd\2 \subseteq \gpd  \times \gpd $; a multiplication map $(\gamma, \eta) \mapsto \gamma \eta $ from $\gpd\2$ to $\gpd $; and an inversion map $\gamma \mapsto \gamma\inv$ from $\gpd  $ to $\gpd $, which behave like multiplication and inversion do in groups wherever they are defined.
The {\em unit space} of $\gpd $ is $\gpd\z = \{ \gamma \gamma\inv: \gamma \in \gpd \}$.  We then have  range and source maps $r, s: \gpd  \to \gpd\z$ given by 
\[ r(\gamma) = \gamma \gamma\inv, \quad s(\gamma) := \gamma\inv \gamma,\]
which satisfy $r(\gamma) \gamma = \gamma = \gamma s(\gamma)$ for all $\gamma \in \gpd $. 
Given $u \in \gpd\z$, we write $\gpd_u := \{ \gamma \in \gpd: s(\gamma)= u\}$ and $\gpd^u := \{ \gamma \in \gpd: r(\gamma) = u\}$.
We can also describe $\gpd\2$ using the range and source maps:
\[ \gpd\2 = \{ (\gamma, \eta) \in \gpd  \times \gpd : s(\gamma) = r(\eta)\}.\]

In this paper we will assume that $\gpd $ is equipped with a locally compact Hausdorff topology with respect to which the multiplication and inversion maps are continuous.
The groupoids considered in this paper will also be {\em {\'e}tale} -- that is, $r$ and $s$ will be local homeomorphisms.

The  link between groupoids and Cartan subalgebras was established in the seminal papers \cite{Kumjian:diag, renault-cartan}.

\begin{defn}[{\cite[Definition 5.1]{renault-cartan}}]
\label{def:cartan}
Let $A$ be a $C^{*}$-algebra.  A $C^{*}$-subalgebra $B$ of $A$ is a {\em Cartan subalgebra} if:
\begin{enumerate}[label=(\arabic*)]
\item\label{item:def-Cartan:masa} $B$ is a maximal Abelian subalgebra
of $A$.
\item\label{item:def-Cartan:CondExp} There exists a faithful conditional expectation $\Phi\colon  A \to B$.
\item\label{item:def-Cartan:normalizer} $B$ is regular; that is, the normalizer of $B$, 
\[ N(B) := \{ n \in A \text{ such that } nbn^{*}, n^{*}bn \in B \text{ for all } b \in B\},\]
generates $A$ as a $C^{*}$-algebra.
\item\label{item:def-Cartan:Approx1} $B$ contains an approximate identity for $A$.
\end{enumerate}
\end{defn}
For this section, we fix a Cartan subalgebra $B$ of some separable $C^*$-algebra $A$. 
Let us first recall how $(A,B)$ gives rise to a topologically principal groupoid and twist (cf.~\cite[1.6]{Kumjian:diag} or \cite[Proposition 4.7]{renault-cartan}), and then gather a few tools to study them. Let $\widehat{B}$ be the spectrum of $B$, viewed as the space of one-dimensional representations of $B$ (a subspace of $B^{*}$, the space of linear functionals on $B$), and let $\Omega\colon C_0 (\widehat{B})\to B$ be the Gelfand representation. 

As $B$ contains an approximate identity for $A$, if $n \in N(B)$ then $n^{*}n, nn^{*} \in B$. For each $n\in N(B)$, there exists a unique partial homeomorphism $\alpha_n$ with domain
    \[
        \mathrm{dom}(n)
        :=
        \left\{
            x\in \widehat{B}
            \,\middle|\,
            \Omega\inv(n^{*}n)(x) = x(n^{*}n)>0
        \right\}
    \] and with codomain $\mathrm{dom}(n^{*})$, that satisfies
    \begin{align}\label{eq:defining-eq-for-alpha}
        n^{*} \Omega(f) n 
        =
        \Omega(f\circ\alpha_{n}) \, n^{*} n
    \end{align}
    for all $ f \in C_0(\widehat{B}).$
    If $n,m\in N(B)$, then 
    $\alpha_n \circ \alpha_m = \alpha_{nm}$ and $\alpha_{n^{*}} = \alpha_n\inv$  (cf.~\cite[Corollary 1.7]{Kumjian:diag}).

    The {\em Weyl groupoid} $\mc{G}_{(A,B)}$ is the quotient of
    \[
        D:=
        \left\{
            (\alpha_{n}(x),n, x) \mid
            n\in N(B),
             x\in \mathrm{dom}(n)
        \right\}
    \]
    by the equivalence relation
    \begin{align*}
    \begin{split}
       & (\alpha_n(x), n, x) \sim
        (\alpha_m( y), m, y)
         \iff   \\
       &x=y  
        \text{ and }
        \alpha_n\vert_{U}
        = \alpha_m\vert_{U}
        \text{ for an open neighborhood $U\sset\widehat{B}$ of $x$}
        .
    \end{split}
    \end{align*}
    We will
    denote the equivalence class of $(\alpha_n( x), n, x)$  by $[\alpha_n(x), n, x]$. 
    The  
    groupoid structure on $\mc{G}_{(A,B)}$ 
    is
    defined by 
    \begin{align*}
        [\alpha_n(\alpha_m(x)), n, \alpha_m(x)]
        \cdot
        [\alpha_m(x), m, x],
        &=
        [\alpha_{nm}(x), nm, x], \text{ and }
        \\
        [\alpha_m(x), m, x]\inv &= [x, m^{*}, \alpha_{m}(x)].
    \end{align*}
   To topologize $\mc{G}_{(A,B)}$,  we define a basic open set to be of the form $\{ [\alpha_n(x), n, x]: \alpha_n(x) \in V, x \in U\}$, where $U, V \subseteq \widehat B$ are open and $n \in N(B)$ 
   (see \cite[Section 3]{renault-cartan}).
    It follows from the remark at the bottom of page 971 in \cite{Kumjian:diag} that $x\in\dom(n)$ if and only if $\Omega\inv (n^{*}n)$ does not vanish at $x$.
     \begin{rmk}\label{rmk:dom-ran-argument}
        For each $x\in \dom(n)$, we have $\alpha_n (x)(nn^{*}) = x( n^{*}n)$. Indeed, if $b\in B$, then
        $
            x(n^{*} b n ) = \alpha_{n}(x)(b) \, x(n^{*}n).
        $
        In particular, for $b= nn^{*},$  the fact that each functional $x\in \widehat B$ is multiplicative implies that
        \[
            x(n^{*}n)\,x(n^{*}n) = x(n^{*} (nn^{*}) n )  =  \alpha_{n}(x)(nn^{*}) \cdot x(n^{*}n).
        \]
        Since $x\in\dom(n)$, we can divide by $x(n^{*}n )$ and get
        $ 0\neq x(n^{*}n)= \alpha_{n}(x)(nn^{*}) $ as claimed.
    \end{rmk}
    
    The {\em Weyl twist} $ \Sigma_{(A,B)}$ is another groupoid associated to the Cartan pair $(A,B)$. Like the Weyl groupoid, the Weyl twist is also a quotient of $D$,  but by the equivalence relation
    \begin{align*}
        &(\alpha_n(x), n, x) \approx
        (\alpha_m(y), m, y)  
        \iff  \\
        &
        x = y
        \text{ and }
        \exists \ b,b'\in B \text{ such that }
        x(b),x(b') >0 \text{ and }
        nb=mb'
        .
    \end{align*}
    We write $\llbracket \alpha_n(x), n, x \rrbracket$ for the class of $(\alpha_n(x), n, x)$ in $ \Sigma_{(A,B)}$. We point out that equivalence with respect to $\approx$ implies equivalence with respect to $\sim$. 
    
    As its name suggests, the Weyl twist is a $\mathbb T$-groupoid, or twist, over $\mc{G}_{(A,B)}$.  As such, one can construct the twisted groupoid $C^{*}$-algebra $C_{r}^{*} \left(\mc{G}_{(A,B)},\Sigma_{(A,B)}\right)$ (cf.~\cite[Section 4]{renault-cartan}).
    The Weyl twist and groupoid are constructed exactly so that
    \[
        (A,B) \cong \left(C_{r}^{*} \left(\mc{G}_{(A,B)},\Sigma_{(A,B)}\right), C_{0}\bigl(\mc{G}_{(A,B)}\z\bigr)\right),
    \]
    see \cite[Theorem 5.9]{renault-cartan}.

 We recall the construction of a twisted groupoid $C^{*}$-algebra in the case when the twist arises from a 2-cocycle, 
as this is the level of generality we will need in this paper.  Recall that a {\em \textup($\mathbb{T}$-valued\textup) $2$-cocycle} on a groupoid $\gpd $ is a 
function $c\colon \gpd\2\to\mbb{T}$ 
which satisfies the {\em cocycle condition}
\begin{equation}\label{eq:cocycle}
    c(x,yz)\, c(y,z) = c(x,y)\, c(xy,z) 
    \text{ for all } (x,y), (y,z)\in\gpd\2.
\end{equation}
Given a second countable, locally compact Hausdorff, {\'e}tale groupoid $\gpd$ and a continuous  2-cocycle $c\colon  \gpd\2 \to \mathbb T$, we denote by $C_c(\gpd, c)$ the collection of continuous $\mathbb C$-valued functions on $\gpd$ which we view as a $*$-algebra via the twisted convolution multiplication
    \[ f g\, (\gamma) : = \sum_{\eta \rho = \gamma} f(\eta) g(\rho) c(\eta, \rho)\]
    and the involution 
    \[ f^{*}(\gamma) := \overline{f(\gamma\inv ) c(\gamma, \gamma\inv )}.\]
    For each $u \in \gpd\z$, we represent $C_c(\gpd, c)$ on $\ell^2(\gpd_u) $ by left multiplication:
    \[\pi_u(f)\xi :=
    f\xi=
    \left( \gamma \mapsto \sum_{\eta \rho = \gamma} f(\eta) \xi(\rho) c(\eta, \rho)\right).\] Let $\| f\|_u $ denote the operator norm of $ \pi_u(f).$
    The {\em reduced twisted groupoid $C^{*}$-algebra} $C^{*}_r(\gpd, c)$ is then the completion of $C_c(\gpd, c)$ in the norm $\| \cdot \|_r := \sup_{u\in \gpd\z} {\| \cdot \|_u}$.

While the definition of the Weyl groupoid and Weyl twist seem quite different, the following very helpful proposition describes the groupoid in terms more similar to the twist.  The result may be known to experts, but we were unable to locate it in the literature.
\begin{prop}
\label{prop:Weyl-gpd-similar-to-twist}
    Suppose $n_{i}\in N(B)$ and $x\in \dom(n_{1})\cap\dom(n_{2})$. Then $[\alpha_{n_{1}}(x), n_{1}, x]=[\alpha_{n_{2}}(x), n_{2}, x]$ if and only if there exist $b_{i}\in B$ so that $x (b_{i}) \neq 0$ and $n_{1}b_{1}=n_{2}b_{2}.$
\end{prop}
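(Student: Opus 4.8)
The plan is to prove both implications directly from the defining equation~\eqref{eq:defining-eq-for-alpha}, the identity in Remark~\ref{rmk:dom-ran-argument}, and the maximality of $B$ in $A$ (i.e.\ $B=B'\cap A$).

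For the (easier) backward direction, suppose $n_{1}b_{1}=n_{2}b_{2}$ with $x(b_{i})\neq 0$. Conjugating $\Omega(f)$ by this common element and applying~\eqref{eq:defining-eq-for-alpha} to each $n_{i}$, I would obtain, for every $f\in C_{0}(\widehat B)$,
\[
 \Omega(f\circ\alpha_{n_{1}})\,d = b_{1}^{*} n_{1}^{*}\,\Omega(f)\,n_{1} b_{1} = b_{2}^{*} n_{2}^{*}\,\Omega(f)\,n_{2} b_{2} = \Omega(f\circ\alpha_{n_{2}})\,d,
\]
where $d:=(n_{1}b_{1})^{*}(n_{1}b_{1})=(n_{2}b_{2})^{*}(n_{2}b_{2})\in B$; here I use that the factors $\Omega(f\circ\alpha_{n_{i}})$, $b_{i}$, and $n_{i}^{*}n_{i}$ all lie in the abelian algebra $B$ and so commute. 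Since $x(d)=\abs{x(b_{1})}^{2}\,x(n_{1}^{*}n_{1})>0$, the Gelfand transform $\Omega\inv(d)$ is strictly positive on some open neighborhood $V$ of $x$. Dividing the displayed identity by $\Omega\inv(d)$ on $V$ and using that $C_{0}(\widehat B)$ separates the points of $\widehat B$, I conclude $\alpha_{n_{1}}=\alpha_{n_{2}}$ on $V$, which is exactly the statement $[\alpha_{n_{1}}(x), n_{1}, x]=[\alpha_{n_{2}}(x), n_{2}, x]$.

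The forward direction is the heart of the matter, and I expect the key obstacle to be the following local rigidity statement: \emph{if $k\in N(B)$ satisfies $\alpha_{k}=\id$ on all of $\dom(k)$, then $k\in B$.} I would prove this by showing $k$ commutes with $B$ and then invoking maximality. Concretely, $\alpha_{k}=\id$ turns~\eqref{eq:defining-eq-for-alpha} into $k^{*}\Omega(f)k = k^{*}k\,\Omega(f)$ for all $f$; expanding $(k\Omega(f)-\Omega(f)k)^{*}(k\Omega(f)-\Omega(f)k)$ and repeatedly substituting this identity (for $f$, $\bar f$, and $\abs{f}^{2}$) collapses the whole expression to $0$, so $k\Omega(f)=\Omega(f)k$ for every $f$ and hence $k\in B'\cap A = B$. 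This is where the maximality hypothesis of the Cartan condition is essential: without it one only concludes $k\in B'\cap A$, which may be strictly larger than $B$.

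Granting this, suppose $\alpha_{n_{1}}=\alpha_{n_{2}}$ on an open neighborhood $U$ of $x$, and set $x':=\alpha_{n_{1}}(x)=\alpha_{n_{2}}(x)$. Then $m:=n_{1}n_{2}^{*}\in N(B)$ satisfies $\alpha_{m}=\alpha_{n_{1}}\circ\alpha_{n_{2}}\inv=\id$ near $x'$, and $x'\in\dom(m)$. Choosing a bump function $g'$ with $g'(x')\neq 0$ supported in a small enough neighborhood of $x'$ (so that $\alpha_{k}=\id$ on all of $\dom(k)$ for $k:=m\,\Omega(g')$), the rigidity statement gives $c':=n_{1}n_{2}^{*}\,\Omega(g')\in B$; a computation with Remark~\ref{rmk:dom-ran-argument} shows $x'(c')\neq 0$ and that $\Omega\inv(c')$ is supported where $n_{2}n_{2}^{*}>0$. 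Right-multiplying $c'=n_{1}n_{2}^{*}\,\Omega(g')$ by $n_{2}$ and applying~\eqref{eq:defining-eq-for-alpha} to $n_{2}$ yields $n_{1}\beta_{1}=c'n_{2}$ with $\beta_{1}:=\Omega(g'\circ\alpha_{n_{2}})\,(n_{2}^{*}n_{2})\in B$ and $x(\beta_{1})\neq 0$. Finally, because $\Omega\inv(c')$ is supported where $n_{2}n_{2}^{*}$ is invertible, I can write $c'=u\,(n_{2}n_{2}^{*})$ for some $u\in B$ with $x'(u)\neq 0$; the intertwining identity $(n_{2}n_{2}^{*})n_{2}=n_{2}(n_{2}^{*}n_{2})$ then gives $c'n_{2}=n_{2}\beta_{2}$ with $\beta_{2}:=n_{2}^{*}u\,n_{2}\in B$ and, again by Remark~\ref{rmk:dom-ran-argument}, $x(\beta_{2})=x'(u)\,x(n_{2}^{*}n_{2})\neq 0$. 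Combining, $n_{1}\beta_{1}=n_{2}\beta_{2}$ with $x(\beta_{i})\neq 0$, as required. The main obstacle throughout is that $B$-elements do not commute past normalizers; carefully tracking the range projection $n_{2}n_{2}^{*}$ -- and basing the argument on the \emph{range} side at $x'$ rather than at $x$ -- is what makes the final cancellation go through.
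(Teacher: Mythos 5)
Your proof is correct, and while the backward direction is essentially the paper's own argument (conjugating $\Omega(f)$ by the common element $n_1b_1=n_2b_2$ and using that $C_0(\widehat B)$ separates points), the forward direction takes a genuinely different route. Both arguments rest on the same two pillars --- the maximality-based rigidity fact (a normalizer whose partial homeomorphism is the identity on its domain lies in $B$) and a Urysohn-type cut-down to localize --- but they deploy them differently. The paper invokes the rigidity fact as known in the first line of the proof of Lemma~\ref{lem:alpha-same-iff-in-B}, whereas you prove it outright via the $T^{*}T=0$ computation; that computation is correct and makes the argument self-contained. More substantively, the paper localizes at the source point $x$: it cuts down on the left, forming $m_i=\Omega(k)n_i^{*}$ with $\supp'(k)\subseteq X$ (Lemma~\ref{lem:cut-down} computes $\dom(m_i)$ and identifies $\alpha_{m_i}$), obtains $b_1=\Omega(k)n_1^{*}n_2\Omega(\overline{k})\in B$ from Lemma~\ref{lem:alpha-same-iff-in-B}, and then needs the external intertwining result \cite[Lemma 4.2]{DGNRW:Cartan} (that $n_2\Omega(f\circ\alpha_{n_2})=\Omega(f)n_2$) to rewrite $n_1b_1$ as $n_2b_2$. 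You instead localize at the range point $x'=\alpha_{n_i}(x)$, cut down $n_1n_2^{*}$ on the right by a single bump function to get $c'\in B$, and replace the intertwining lemma by the factorization $c'=u\,(n_2n_2^{*})$ together with the trivial identity $(n_2n_2^{*})n_2=n_2(n_2^{*}n_2)$; this avoids any citation outside the Kumjian--Renault formalism and produces both $\beta_1$ and $\beta_2$ from one element $c'$. One step should be stated more carefully: $n_2n_2^{*}$ is of course not invertible, so the factorization $c'=u\,(n_2n_2^{*})$ with $u\in B$ requires that $\supp(\Omega\inv(c'))$ be a \emph{compact} subset of $\dom(n_2^{*})$, on which $\Omega\inv(n_2n_2^{*})$ is then bounded below; your choice of a compactly supported bump $g'$ inside a small enough neighborhood of $x'$ delivers exactly this, and the quotient extended by zero is continuous because $\dom(n_2^{*})$ and the complement of $\supp(g')$ form an open cover of $\widehat{B}$ on whose overlap both formulas vanish. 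With that precision added, both implications go through.
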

For the proof, we require two lemmata.

\begin{lemma}\label{lem:alpha-same-iff-in-B}
    If $n,m\in N(B)$, then
    \[
        \alpha_{n}\equiv \alpha_{m} \text{ on }\dom (n)\cap\dom (m)
        \iff
        nm^{*} \in B.
    \]
\end{lemma}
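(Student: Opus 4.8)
The plan is to reduce the two–normalizer statement to a statement about a single normalizer. Since $N(B)$ is closed under multiplication and under taking adjoints, the product $w := n m^{*}$ again lies in $N(B)$, and the composition rule $\alpha_{nm^{*}} = \alpha_{n}\circ\alpha_{m^{*}} = \alpha_{n}\circ\alpha_{m}\inv$ recorded above lets me rewrite ``$\alpha_{n}$ and $\alpha_{m}$ agree'' as ``$\alpha_{w}$ is the identity''. Concretely, I first establish the change–of–variables equivalence
\[
    \alpha_{n}\equiv\alpha_{m}\text{ on }\dom(n)\cap\dom(m)
    \iff
    \alpha_{w}\equiv\id\text{ on }\dom(w).
\]
Because $\alpha_{nm^{*}} = \alpha_{n}\circ\alpha_{m^{*}}$ is an equality of partial homeomorphisms, their domains coincide, giving $\dom(w) = \alpha_{m}\bigl(\dom(n)\cap\dom(m)\bigr)$. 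Writing a general point of $\dom(w)$ as $y = \alpha_{m}(x)$ with $x\in\dom(n)\cap\dom(m)$ and using $\alpha_{m}\inv(y)=x$, the identity $\alpha_{w}(y)=\alpha_{n}(x)$ turns each direction of the displayed equivalence into the assertion $\alpha_{n}(x)=\alpha_{m}(x)=y$.

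It then remains to prove, for a single $w\in N(B)$, that $\alpha_{w}\equiv\id$ on $\dom(w)$ if and only if $w\in B$. The direction $w\in B\Rightarrow\alpha_{w}=\id$ is a short computation: if $w\in B$, commutativity of $B$ gives $w^{*}\Omega(f)w = \Omega(f)\,w^{*}w$, which compared with the defining equation \eqref{eq:defining-eq-for-alpha} forces $f\circ\alpha_{w}=f$ wherever $\Omega\inv(w^{*}w)$ is nonzero, that is, on $\dom(w)$; since $C_{0}(\widehat B)$ separates points, $\alpha_{w}=\id$ there.

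The substance of the lemma is the converse, and this is where I expect the real work to be. Assuming $\alpha_{w}\equiv\id$ on $\dom(w)$, I feed $\alpha_{w}=\id$ back into \eqref{eq:defining-eq-for-alpha}: on $\dom(w)$ we have $f\circ\alpha_{w}=f$, while off $\dom(w)$ the factor $\Omega\inv(w^{*}w)$ vanishes, so $(f\circ\alpha_{w})\,\Omega\inv(w^{*}w) = f\,\Omega\inv(w^{*}w)$ as functions on all of $\widehat B$. Translating back through the Gelfand isomorphism $\Omega$ yields the operator identity
\[
    w^{*}bw = b\,w^{*}w \qquad\text{for every } b\in B.
\]
The key step is then to upgrade this to the statement that $w$ commutes with $B$: applying the identity with $b$ replaced by $b$, $b^{*}$, and $b^{*}b$ and using that elements of $B$ commute, every one of the four terms of $(wb-bw)^{*}(wb-bw)$ collapses to $b^{*}b\,w^{*}w$, so they cancel and $(wb-bw)^{*}(wb-bw)=0$; hence $wb=bw$ for all $b\in B$.

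Finally, since $B$ is a maximal Abelian subalgebra of $A$ (Definition \ref{def:cartan}\ref{item:def-Cartan:masa}), an element commuting with all of $B$ must lie in $B$, so $w\in B$. Combining the change–of–variables step with this single–normalizer equivalence applied to $w=nm^{*}$ gives the lemma. The point demanding most care throughout is the bookkeeping of partial–homeomorphism domains, in particular the identification $\dom(w)=\alpha_{m}\bigl(\dom(n)\cap\dom(m)\bigr)$ and the fact that \eqref{eq:defining-eq-for-alpha} controls behaviour only where $w^{*}w$ does not vanish.
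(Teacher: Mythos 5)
Your proof is correct and follows essentially the same route as the paper's: reduce to the single normalizer $w = nm^{*}$ via $\alpha_{nm^{*}} = \alpha_{n}\circ\alpha_{m}\inv$, identify $\dom(nm^{*}) = \alpha_{m}\bigl(\dom(n)\cap\dom(m)\bigr)$, and appeal to maximal abelianness. Where you add genuine content is the equivalence ``$\alpha_{w}\equiv\id$ on $\dom(w)$ if and only if $w\in B$'': the paper asserts this in one line (``since $B$ is maximal Abelian''), whereas you prove the nontrivial direction, extracting $w^{*}bw = b\,w^{*}w$ for all $b \in B$ from the defining equation \eqref{eq:defining-eq-for-alpha} and then checking that all four terms of $(wb-bw)^{*}(wb-bw)$ collapse to $b^{*}b\,w^{*}w$, so that $wb=bw$ and hence $w\in B'\cap A = B$; this computation is correct and makes the argument self-contained. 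The only other difference is cosmetic: you read off $\dom(nm^{*})$ from the equality of partial homeomorphisms $\alpha_{nm^{*}}=\alpha_{n}\circ\alpha_{m^{*}}$ (equality of partial maps includes equality of domains), while the paper computes the same set directly from the Gelfand transform of $mn^{*}nm^{*}$.
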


\begin{proof}
    We know that $nm^{*}\in B$ if and only if $\alpha_{nm^{*}}=\mathrm{id}_{\dom (nm^{*})}$ since $B$ is maximal Abelian. Moreover, $\alpha_{nm^{*}} = \alpha_{n} \circ \alpha_{m^{*}} = \alpha_{n}\circ\alpha_{m}\inv$. Both things combined yield that $nm^{*}\in B$ if and only if $\alpha_{n}\circ\alpha_{m}\inv$ is the identity on
    $
        \dom (nm^{*})
    .
    $
    Since $n^{*}n \in B$, we can use the defining property of $\alpha_{m^{*}}$ (Equation~\eqref{eq:defining-eq-for-alpha}) to rewrite 
    \[
        (mn^{*}nm^{*})(x)
        =
        (n^{*}n \circ \alpha_{m^{*}}) (x) mm^{*}(x),
    \]
    so we have
    \begin{align*}
        \dom (nm^{*})
        =&
        \left\{
            x\in \widehat{B} \,\vert\, x\in\dom (m^{*}) \text{ and } (n^{*}n \circ \alpha_{m^{*}}) (x) \neq 0
        \right\}
        \\=&
        \dom (m^{*})\cap \alpha_{m^{*}}\inv\bigl( \dom (n)\bigr)
        =
        \alpha_{m}\bigl( \dom (m)\cap  \dom (n)\bigr)
        .
    \end{align*}
    Thus, $ nm^{*}\in B$ if and only if $\alpha_{n}\circ\alpha_{m}\inv$ is the identity on $\alpha_{m}\bigl( \dom (m)\cap  \dom (n)\bigr)$ if and only if $\alpha_{n} \equiv \alpha_{m}$ on $ \dom (m)\cap  \dom (n)$.
\end{proof}

We define the \emph{open support} of $k \in C_0(\widehat{B})$ by $\supp'(k) := \{x \in \widehat{B}\,\vert\, k(x) \neq 0\}$.

\begin{lemma}[Urysohn-type Lemma]
\label{lem:cut-down}
    Let $f\in N(B)$ and suppose that $k \in C_0 (\widehat{B})$ has $\supp'(k)\sset \mathrm{dom}(f^{*})$. Then the partial homeomorphism associated to $\Omega(k)f$ has domain $\alpha_{f}\inv(\supp'(k)) = \alpha_{f^{*}}(\supp'(k))$, where it coincides with $\alpha_{f}$.
\end{lemma}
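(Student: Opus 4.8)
The plan is to set $g := \Omega(k)f$ and proceed in two stages: first compute the domain of the partial homeomorphism $\alpha_g$ attached to $g$, and then identify $\alpha_g$ with $\alpha_f$ on that domain. Before anything else I would note that $g \in N(B)$, so that $\alpha_g$ is defined and the statement makes sense: since $B$ is Abelian we have $B \sset N(B)$, and $N(B)$ is closed under multiplication (this is implicit already in the relation $\alpha_{nm} = \alpha_n \circ \alpha_m$ recorded in the preliminaries), so the product $\Omega(k)f$ of the element $\Omega(k) \in B$ with $f \in N(B)$ again normalizes $B$.

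For the domain, I would compute $g^{*}g$ explicitly. Since $\Omega$ is a $*$-homomorphism, $g^{*}g = f^{*}\Omega(\bar k)\Omega(k) f = f^{*}\Omega(|k|^2) f$, and the defining equation \eqref{eq:defining-eq-for-alpha} (applied to $|k|^2 \in C_0(\widehat B)$) rewrites this as $\Omega\bigl(|k|^2 \circ \alpha_f\bigr)\, f^{*}f$. For $x \in \dom(f)$, the identity $x(f^{*}bf) = \alpha_f(x)(b)\, x(f^{*}f)$ of Remark \ref{rmk:dom-ran-argument}, taken with $b = \Omega(|k|^2)$, gives $x(g^{*}g) = |k(\alpha_f(x))|^2\, x(f^{*}f)$; while for $x \notin \dom(f)$ we have $x(f^{*}f) = 0$ and hence $x(g^{*}g) = 0$ directly from the factorization $g^{*}g = \Omega\bigl(|k|^2 \circ \alpha_f\bigr)\, f^{*}f$. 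Thus $x \in \dom(g)$ if and only if $x \in \dom(f)$ and $\alpha_f(x) \in \supp'(k)$, i.e. $\dom(g) = \alpha_f\inv(\supp'(k))$. Here the hypothesis $\supp'(k) \sset \dom(f^{*})$ is exactly what guarantees that $\supp'(k)$ lies in the range $\dom(f^{*})$ of $\alpha_f$, so that $\alpha_f\inv(\supp'(k))$ is well defined and equals $\alpha_{f^{*}}(\supp'(k))$, since $\alpha_{f^{*}} = \alpha_f\inv$.

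It then remains to show $\alpha_g \equiv \alpha_f$ on $\dom(g)$, and the cleanest route is Lemma \ref{lem:alpha-same-iff-in-B}. I would simply check that $g f^{*} \in B$, which is immediate because $g f^{*} = \Omega(k)\,(f f^{*})$ is a product of two elements of $B$ (recall $f f^{*} \in B$, as $B$ contains an approximate identity for $A$). Lemma \ref{lem:alpha-same-iff-in-B} then yields $\alpha_g \equiv \alpha_f$ on $\dom(g) \cap \dom(f)$, and since the domain computation shows $\dom(g) = \alpha_f\inv(\supp'(k)) \sset \dom(f)$, this intersection is all of $\dom(g)$, which completes the proof.

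The only genuinely delicate point is the domain computation, and in particular the interpretation of the symbol $|k|^2 \circ \alpha_f$ as an element of $C_0(\widehat B)$ (the continuous extension by zero off $\dom(f)$); the bookkeeping showing that $\dom(g)$ is cut down to precisely $\alpha_f\inv(\supp'(k))$, no more and no less, is where the support hypothesis does its real work. Everything else is formal manipulation with the defining relation for the $\alpha_n$ together with the algebraic closure properties of $B$ and $N(B)$.
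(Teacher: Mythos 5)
Your proof is correct, and half of it coincides with the paper's. Setting $g:=\Omega(k)f$, your domain computation is exactly the paper's: factor $g^{*}g=f^{*}\Omega(|k|^{2})f$ through Equation~\eqref{eq:defining-eq-for-alpha} to get $\Omega\inv(g^{*}g)=(|k|^{2}\circ\alpha_{f})\cdot\Omega\inv(f^{*}f)$, whose open support is $\alpha_{f}\inv(\supp'(k))=\alpha_{f^{*}}(\supp'(k))\sset\dom(f)$. The genuine difference is in proving $\alpha_{g}\equiv\alpha_{f}$ on $\dom(g)$. The paper does this by hand: for arbitrary $b\in B$ it computes $\Omega\inv(g^{*}bg)$ twice --- once via the defining property of $\alpha_{g}$, once by rewriting $g^{*}bg=f^{*}\bigl(\Omega(\overline{k})\,b\,\Omega(k)\bigr)f$ and using the defining property of $\alpha_{f}$ --- then cancels the common nonvanishing factor $(|k|^{2}\circ\alpha_{f})\cdot\Omega\inv(f^{*}f)$ on $\dom(g)$, using that $C_{0}(\widehat{B})$ separates points. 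You instead invoke Lemma~\ref{lem:alpha-same-iff-in-B} through the one-line observation $gf^{*}=\Omega(k)(ff^{*})\in B$, together with the inclusion $\dom(g)\sset\dom(f)$ already established; this is not circular (Lemma~\ref{lem:alpha-same-iff-in-B} is proved independently of Lemma~\ref{lem:cut-down}), and it is shorter. The only trade-off is that Lemma~\ref{lem:alpha-same-iff-in-B} rests on maximal abelianness of $B$, whereas the paper's computation needs nothing beyond the defining equation for the $\alpha_{n}$'s --- immaterial here, since $B$ is Cartan, but it makes the paper's argument slightly more self-contained. Finally, the ``delicate point'' you flag about interpreting $|k|^{2}\circ\alpha_{f}$ is no more delicate for you than for the paper: one never needs $|k|^{2}\circ\alpha_{f}$ alone to lie in $C_{0}(\widehat{B})$, only the product $(|k|^{2}\circ\alpha_{f})\cdot\Omega\inv(f^{*}f)$, which is automatic since it equals $\Omega\inv(g^{*}g)$; this is the same convention behind the paper's Equation~\eqref{eq2}.
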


\begin{proof}
   First note that $f_{2}:= \Omega(k) f$ is still a normalizer of $B$ because $f$ is and because $\Omega(k)\in B$; thus, it makes sense to speak of the corresponding partial homeomorphism $\alpha_{f_{2}}$ and its domain, $\dom (f_{2})$.
   
   By definition of $f_{2},$ we have $f_{2}^{*} f_{2} = f^{*}  \Omega(\abs{k}^{2})  f.$ The defining property of $\alpha_{f}$  (Equation~\eqref{eq:defining-eq-for-alpha}, with $b=\Omega(|k|^2)$) 
   yields
        \begin{align}\label{eq2}
            \Omega\inv (f_{2}^{*} f_{2})
            =
            (\abs{k}^{2}\circ \alpha_{f}\bigr) \cdot \Omega\inv(f^{*} f).
        \end{align}
    By assumption, $V := \alpha_{f}\inv(\supp'(k)) $ is contained in $\dom(f)$, so the above yields
        \begin{align*}
            \dom(f_{2})
            :=&
            \supp' \bigl(\Omega\inv (f_{2}^{*} f_{2})\bigr)
            =
             \supp' (k \circ \alpha_{f})  \cap \supp'\bigl( \Omega\inv (f^{*} f)\bigr) 
             \notag
             \\
            =&
             V \cap \dom (f)
             =
             V.\notag 
        \end{align*}
        By the defining property of $\alpha_{f_{2}}$,  for any $b\in B$ we have
        \begin{align}\label{eq1}
            \Omega\inv (f_{2}^{*}  b f_{2})
            &= (\Omega\inv(b)\circ \alpha_{f_{2}})\cdot \,\Omega\inv (f_{2}^{*} f_{2})\notag
            \\
            &=(\Omega\inv(b)\circ \alpha_{f_{2}})\cdot \,  (\abs{k}^{2}\circ \alpha_{f}\bigr)\ \cdot \Omega\inv(f^{*} f),
        \end{align}
        where the second equality is due to Equation~\eqref{eq2}. The definition of $f_{2}$ implies that
        \begin{align*}
            f_{2}^{*}  b f_{2} =  f^{*} (\Omega(k)^{*} b  \Omega(k) ) f ,
        \end{align*}
        so that the defining property of $\alpha_{f}$ (Equation~\eqref{eq:defining-eq-for-alpha})  yields
        \begin{align*}
            \Omega\inv (f_{2}^{*}  b f_{2})
            =&
            \bigl( (\overline{k} \cdot \Omega\inv(b) \cdot k )\circ \alpha_{f}\bigr) \cdot \Omega\inv(f^{*} f)
            \\
            =&
            (\Omega\inv(b)\circ \alpha_{f}) \cdot (\abs{k}^2\circ \alpha_{f} ) 
            \cdot \Omega\inv(f^{*} f).
        \end{align*}
        Combining this with Equation~\eqref{eq1} reveals that, for any $b \in B,$
        \[
            (\Omega\inv(b)\circ \alpha_{f_{2}})\cdot \,(\abs{k}^{2}\circ \alpha_{f}\bigr) \cdot \Omega\inv(f^{*} f)
            =
            (\Omega\inv(b)\circ \alpha_{f}) \cdot (\abs{k}^2\circ \alpha_{f} )
            \cdot \Omega\inv(f^{*} f).
        \]
        We conclude that $\alpha_{f_{2}}  = \alpha_{f}$ wherever neither $\Omega\inv(f^{*} f)$ nor $(\abs{k}^2\circ \alpha_{f} )$ vanish, which occurs precisely in $\dom(f) \cap \supp' (k\circ \alpha_{f} ) = \dom(f_{2})$.
\end{proof}

\begin{proof}[Proof of Proposition~\ref{prop:Weyl-gpd-similar-to-twist}]
    First, fix $x \in \dom(n_1) \cap \dom(n_2)$ and assume that $n_{1}b_{1}=n_{2}b_{2}$ for some $b_{i}\in B$ such that $x(b_{i})\neq 0$. In particular, there exists a neighborhood $X$ of $x$ in $\dom(n_{1})\cap\dom(n_{2})$ so that, for all $y\in X$, we have $y(b_{i})\neq 0$. 
    If $g$ is any element of $C_{0}(\widehat{B})$, then by the defining property of $\alpha_{n_{i}}$ (Equation~\eqref{eq:defining-eq-for-alpha})  and since $B$ is commutative, we have
    \begin{align*}
        (n_{i}b_{i})^{*} \,\Omega(g) \,(n_{i}b_{i})
        &=
        b_{i}^{*} \,(n_{i}^{*} \Omega(g) n_{i})\, b_{i}
        \\&=
        b_{i}^{*} \Omega(g\circ \alpha_{n_{i}}) (n_{i}^{*} n_{i}) b_{i}
        =
         \Omega(g\circ \alpha_{n_{i}}) (n_{i}^{*} n_{i})(b_{i}^{*}b_{i}),
    \end{align*}
    so that the equality $n_{1}b_{1}=n_{2}b_{2}$ implies that 
    \[
        \Omega(g\circ \alpha_{n_{1}}) (n_{1}^{*} n_{1})(b_{1}^{*}b_{1})
        =
        \Omega(g\circ \alpha_{n_{2}}) (n_{2}^{*} n_{2})(b_{2}^{*}b_{2}).
    \]
    Evaluating both sides at $y\in X$ yields
    \[
       g\bigl( \alpha_{n_{1}}(y)\bigr) \, y(n_{1}^{*} n_{1})\, \abs{y(b_{1})}^2
        =
        g\bigl( \alpha_{n_{2}}(y)\bigr) \, y(n_{2}^{*} n_{2})\, \abs{y(b_{2})}^2.
    \]
    By our construction of $X$, we have $y(b_{i})\neq 0$ and also $X\sset\dom(n_{i})$, so that $y(n_{i}^{*} n_{i}) >0$. We have shown that, for any $g\in C_0 (\widehat{B})$, $g\left( \alpha_{n_{1}}(y)\right)$ is a %
    positive multiple of $g\left( \alpha_{n_{2}}(y)\right) $. Since $C_0(\widehat{B})$ separates points (as $\widehat{B}$ is Hausdorff), this implies  $\alpha_{n_{1}}(y)=\alpha_{n_{2}}(y)$ for all $y\in X$. As $X$ is an open neighborhood of $x$, we arrive at the claimed equality in the Weyl groupoid.

    Conversely, assume that
    \[ 
        [\alpha_{n_{1}}(x), n_{1}, x]=[\alpha_{n_{2}}(x), n_{2}, x].
    \]
    We will construct $b_1, b_2 \in B$ such that $x(b_i)\neq 0$
    and  $n_{1}b_{1}=n_{2}b_{2}$.

    By assumption, there exists a neighborhood $X$ of $x$ in $\dom(n_{1})\cap \dom(n_{2})$ on which $\alpha_{n_1}$ and $\alpha_{n_2}$ agree. Let $Y := \alpha_{n_1}(X) = \alpha_{n_2}(X)$, and note that $\alpha_{n_1^{*}}(Y) = \alpha_{n_2^{*}}(Y) = X$.
    As $X$ is an open neighborhood of $x$, \cite[Proposition 4.32 (Urysohn's Lemma)]{Folland} implies the existence of $k\in C_0 (\widehat{B})$ with $k (x)=1$ and $\supp'(k)\sset X$. By our choice of $X$,
    \begin{equation}\label{eq:consequence-of-supp'k}
        y\in\supp'(k)
        \implies 
        \alpha_{n_{1}}(y)
        =
        \alpha_{n_{2}}(y),
        \text{ i.e.\ }
        y=
        \alpha_{n_{1}^{*}}\bigl( \alpha_{n_{2}}(y)\bigr).
    \end{equation}
    By Lemma~\ref{lem:cut-down}, we know that
    $$m_{i}:= \Omega(k)n_{i}^{*}\in N(B)$$
    has $\dom(m_i)= \alpha_{n_{i}} (\supp'(k)) \sset Y \sset \dom(n_{i}^{*})$, and that $\alpha_{m_{i}}$ is extended by $\alpha_{n_{i}^{*}}.$ In particular,  it follows from Implication~\eqref{eq:consequence-of-supp'k} that
    \[
        \dom(m_{1})=\alpha_{n_{1}} (\supp'(k))\overset{\eqref{eq:consequence-of-supp'k}}{=}\alpha_{n_{2}} (\supp'(k))=\dom(m_{2}) \sset Y.
    \]
    This means that $\alpha_{m_{1}}=\alpha_{n_{1}^{*}}\vert_{\dom(m_{1})}=\alpha_{n_{2}^{*}}\vert_{\dom(m_{2})}=\alpha_{m_{2}}$ on all of $\dom(m_{1})=\dom(m_{2})$. By Lemma~\ref{lem:alpha-same-iff-in-B}, we conclude that
    \begin{equation}\label{eq:def:b_1}
        b_{1}:=m_{1}m_{2}^{*} = \Omega(k) n_1^{*} n_2\Omega(\overline{k})
    \end{equation}
    is an element of $B$. To see that $x(b_1) \neq 0$, use the defining property of $\alpha_{m_2^{*}}$ (Equation~\eqref{eq:defining-eq-for-alpha})  to write 
    \begin{align*}
        x(b_{1}^{*}b_{1})
        &=
        x(m_{2}m_{1}^{*}m_{1}m_{2}^{*})
        =
        x(m_{2}m_{2}^{*}) \cdot \alpha_{m_{2}^{*}}(x)(m_{1}^{*}m_{1}).
    \end{align*}
    Since $\supp'(k) \subseteq \dom(n_i)$ and $\alpha_{n_i^{*}}$ extends $\alpha_{m_i}$, 
    \[\supp'(k) = \alpha_{n_i^{*}}(\alpha_{n_i}(\supp'(k))) = \alpha_{n_i^{*}}(\dom(m_i)) = \alpha_{m_i}(\dom(m_i)) = \dom(m_i^{*}).
    \]
    Therefore, $x \in \supp'(k) = \dom(m_2^{*})$ implies $x(m_2m_2^{*})>0$. Moreover, $\alpha_{m_2^{*}}(x) = \alpha_{m_1^{*}}(x) \in \dom(m_1)$, so $\alpha_{m_2^{*}}(x)(m_1^{*}m_1) >0$. Consequently, $x(b_1^{*}b_1)>0$, so $x(b_1) \neq 0$.
    
    By the defining property of $\alpha_{n_{1}^{*}}$, we have
    \begin{equation}\label{eq:n1b1}
        n_{1}b_{1} 
        =
        n_{1} (\Omega(k)n_{1}^{*}m_{2}^{*})
        =
        \Omega(k\circ\alpha_{n_{1}^{*}}) \,(n_{1}n_{1}^{*})\, m_{2}^{*} = \Omega(k \circ \alpha_{n_1^{*}} ) (n_1 n_1^{*}) n_2 \Omega(\overline{k}).
    \end{equation}
Our goal is to rewrite the right-hand side of Equation~\eqref{eq:n1b1} in the form $n_2 b_2$ for some $b_2 \in B$ such that $x(b_2)\neq0$. Note that if  
    \[f:=(k\circ\alpha_{n_{1}^{*}})\cdot\Omega\inv(n_{1}n_{1}^{*}) ,\]
    then $f \in C_0 (\widehat{B})$,  Equation~\eqref{eq:n1b1} becomes $n_1 b_1  = \Omega(f) m_2^{*}$, and $f$ is supported in $\alpha_{n_1}(\supp'(k)) = \alpha_{n_{2}}(\supp'(k)) \sset Y$.

As $\supp'(k) \subseteq X \subseteq \dom(n_2)$, and $\alpha_{n_2} \circ \alpha_{n_2^{*}}(y) = y$ for all $y \in Y = \alpha_{n_2}(X)$, we have $n_2 \Omega(f \circ \alpha_{n_2} ) = \Omega(f) n_2$ by \cite[Lemma 4.2]{DGNRW:Cartan}.
Equation~\eqref{eq:n1b1} can therefore be rewritten as  
\[ n_1 b_1 = n_2 \Omega(f \circ \alpha_{n_2}) \Omega(\overline{k}).\]
Setting $b_2 = \Omega( (f \circ \alpha_{n_2}) \overline{k})$, we have $n_1 b_1 = n_2 b_2$.  

We now complete the proof by showing that $x(b_2) > 0$.  As $k(x) = 1$ by construction, it suffices to show that $f(\alpha_{n_2}(x)) > 0$.  Our construction of $X \ni x$ implies that $k( \alpha_{n_1^{*}} (\alpha_{n_2}(x))) =k( x) =1$ and that 
\[ \Omega\inv(n_1n_1^{*})(\alpha_{n_2}(x)) = \Omega\inv(n_1 n_1^{*})(\alpha_{n_1}(x))= x(n_1^{*} n_1) > 0,\] where the last equality follows from Remark~\ref{rmk:dom-ran-argument}.
Thus, 
$f(\alpha_{n_2}(x))=[(k\circ\alpha_{n_{1}^{*}})\cdot\Omega\inv(n_{1}n_{1}^{*})](\alpha_{n_2}(x))>0$, as desired. 
\end{proof}

\pagebreak[3]\section{The spectrum of a twisted bundle of groups}\label{sec:abelian-gp-bdl}

Assume that $\agpd $ is a second countable, locally compact Hausdorff, {\'e}tale groupoid
and that $c\colon \agpd^{(2)} \to \mathbb{T}$ is a $2$-cocycle on $\agpd$.
We will always assume that $2$-cocycles are {\em normalized}, i.e.\ $c(r(a),a)=1=c(a,s(a))$ for each $a\in\agpd $. 
In order to construct the twisted groupoid $C^{*}$-algebra $C^{*}_r(\agpd, c),$
we will need $c$ to be continuous, so we will frequently impose this assumption.

In this section, we will be interested in bundles of groups, so on top of our topological assumptions above, assume that the range and source maps of $\agpd $ are equal, called ${ p }\colon\agpd \to\agpd\z$.
We write $\agpd_{u}:={ p }\inv(\{u\})$ for $u$ a unit. Moreover, assume that the multiplication map $\agpd\2 \to \agpd$ is commutative
and that the continuous $2$-cocycle $c$ is symmetric on $\agpd $, i.e.\ $c(a,a')=c(a',a)$ for all $a,a'\in\agpd_{u}$, so that its  reduced twisted $C^{*}$-algebra $B:= C^{*}_r (\agpd ,c)$ is commutative by \cite[Lemma 3.5]{DGNRW:Cartan}.

\begin{rmk}\label{rmk:mcA-amenable}
 Observe 
    that $\agpd $ is amenable. Indeed, since $\agpd $ is a bundle of Abelian groups, $C_{r}^{*}(\agpd )$ is commutative; in particular, it is nuclear. Since $\agpd $ is {\'e}tale, amenability therefore follows from \cite[Thm.\ 4.1.5]{AidansNotes}. In  particular, \cite[Corollary 4.3]{jon-astrid} implies that  $C^{*}(\agpd ,c) \cong C^{*}_{r}(\agpd ,c)$.
\end{rmk}

\begin{defn}
Given  $u \in \agpd\z$ and a continuous 2-cocycle $c$ on $\agpd$, let $\specB_{u}$ denote the set of 1-dimensional $c$-projective representations of the Abelian group $\agpd_{u}$. That is, $\specB_{u}$ consists of the maps $\chi\colon \agpd_{u} \to  \mbb{T}$ such that 
\begin{equation}
    \label{eq:chi-proj}
\chi(a) \chi(a') = c(a, a') \chi(aa').\end{equation}
(Observe that such maps $\chi$ necessarily satisfy $\chi(u) = 1$, because $c(a, u) = 1= c(u, a)$ for all $a\in\agpd_{u}$.)
Write $\specB  = \bigsqcup_{u \in  \agpd \z} \specB_{u}$ and $\rho\colon \specB  \to  \agpd \z$ for the projection map. 
\end{defn}

Recall that the {\em spectrum} $\widehat C$ of a commutative $C^{*}$-algebra $C$ is the set of nondegenerate one-dimensional representations of $C$, equipped with the weak-$*$ topology.
As $B= C^{*}_r (\agpd ,c)$ is commutative, the Gelfand--Naimark Theorem yields $B \cong C_0(\widehat B)$. We will show that $\widehat{B} \cong \specB $ for a suitable topology on $\specB$.

\begin{lemma}\label{lem:phi-chi-alg-repn} 
    For $\chi\in \specB $, let
    \[
        \phi_{\chi}\colon C_c(\agpd , c) \to  \mbb{C},
        \quad
        \phi_\chi(f) := \sum_{a \in \agpd_{\rho(\chi)}} \chi(a) f(a).
    \]
    Then $\phi_{\chi}$ is a $*$-algebra homomorphism and extends to an element of $\widehat{B}$.
\end{lemma}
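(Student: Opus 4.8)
The plan is to check in turn that $\phi_\chi$ is linear, multiplicative, and $*$-preserving, that it is nonzero, and finally that it is bounded in the reduced norm so that it extends continuously to a character of $B$. Throughout write $u := \rho(\chi)$ for the unit carrying $\chi$. Since $\agpd$ is {\'e}tale, each fiber $\agpd_u = { p }\inv(\{u\})$ is discrete, so any $f \in C_c(\agpd, c)$ vanishes off a finite subset of $\agpd_u$; hence the defining sum is finite and linearity of $\phi_\chi$ is immediate.

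For multiplicativity I would expand $\phi_\chi(fg) = \sum_{\gamma \in \agpd_u} \chi(\gamma)\,(fg)(\gamma)$ using the twisted convolution, and exploit that $\agpd$ is a bundle of groups: any factorization $\gamma = \eta\rho$ with $\gamma \in \agpd_u$ forces $\eta, \rho \in \agpd_u$ and $\rho = \eta\inv\gamma$. Reindexing the resulting double sum over $(\eta,\rho) \in \agpd_u \times \agpd_u$ and applying the $c$-projective identity \eqref{eq:chi-proj} in the form $\chi(\eta\rho)\, c(\eta,\rho) = \chi(\eta)\chi(\rho)$ (valid because $c$ is $\mathbb{T}$-valued) lets the double sum factor as $\phi_\chi(f)\,\phi_\chi(g)$. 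For the $*$-identity, I would reindex $\phi_\chi(f^{*}) = \sum_{a \in \agpd_u} \chi(a)\,\overline{f(a\inv)\, c(a, a\inv)}$ by the substitution $b = a\inv$ (which again stays in $\agpd_u$) and insert $\chi(b\inv) = c(b, b\inv)\,\overline{\chi(b)}$, obtained from \eqref{eq:chi-proj} with $a' = b\inv$ together with $\chi(u) = 1$. Since $c$ is symmetric, $c(b\inv, b) = c(b, b\inv)$, and as $\abs{c(b,b\inv)} = 1$ the two cocycle factors cancel, leaving $\phi_\chi(f^{*}) = \overline{\phi_\chi(f)}$. This establishes that $\phi_\chi$ is a $*$-algebra homomorphism.

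To see $\phi_\chi \neq 0$, I would evaluate it on a function supported in the open unit space $\agpd\z$ taking the value $1$ at $u$; because the only unit lying in $\agpd_u$ is $u$ itself, all other terms vanish and $\phi_\chi(f) = \chi(u) = 1$.

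The extension step is the only point that is not a routine computation, and it is where amenability enters. From the estimate $\abs{\phi_\chi(f)} \le \sum_{a \in \agpd_u} \abs{f(a)}$ one sees that the one-dimensional $*$-representation $\phi_\chi$ is bounded with respect to the $I$-norm, hence $\abs{\phi_\chi(f)} \le \norm{f}_{C^{*}(\agpd, c)}$. By Remark~\ref{rmk:mcA-amenable} the groupoid $\agpd$ is amenable, so the full and reduced norms coincide and $\abs{\phi_\chi(f)} \le \norm{f}_r$. Thus $\phi_\chi$ extends continuously from the dense $*$-subalgebra $C_c(\agpd, c)$ to a nonzero $*$-homomorphism $B \to \mathbb{C}$; since $B$ is commutative, this is precisely a nondegenerate one-dimensional representation, i.e.\ an element of $\widehat{B}$.
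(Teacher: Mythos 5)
Your proposal is correct and follows essentially the same route as the paper's proof: the same fiberwise finite-sum observation, the same convolution reindexing for multiplicativity, the same use of the projective identity \eqref{eq:chi-proj} for the $*$-condition, and the same extension argument via $I$-norm boundedness, the full norm \eqref{eq:full-norm}, and amenability (Remark~\ref{rmk:mcA-amenable}). Your explicit verification that $\phi_\chi \neq 0$ (evaluating on a function supported in the clopen unit space) is a small detail the paper leaves implicit, but it changes nothing structurally.
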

 At times, we will write $\phi(\chi)$ instead of $\phi_\chi$, to ease notation.

\begin{rmk}
   It is unclear to the authors whether the formula for $\phi_{\chi}$ in Lemma~\ref{lem:phi-chi-alg-repn} extends to elements of $B$ when thought of as $C_0$-functions on $\agpd $. In particular, if $b\in B\sset C_{0} (\agpd )$ with $\phi_{\chi}(b)\neq 0$, does it then follow that $\supp'(b)\cap \agpd_{\rho(\chi)}\neq \emptyset$?
\end{rmk}

\begin{proof}
    First note that since $\agpd $ is {\'e}tale and second countable,  $\agpd_{u}$ is countable, so $\phi_\chi(f)$ is a finite sum for any $\chi \in \specB $ and any $f \in C_c(\agpd , c)$.

    It is evident that $\phi_\chi$ is linear. To see that $\phi_\chi$ is multiplicative, note that if $u:=\rho(\chi)$, then
    \begin{align*}
        \phi_\chi (f g) &
        = \sum_{a_{1},a_{2}  \in \agpd_{ u } } \chi(a_{1} ) f(a_{1} a_{2}) g(a_{2}\inv ) c(a_{1} a_{2}, a_{2}\inv ).
    \end{align*}
    Using the defining formula~\eqref{eq:chi-proj} for $\chi$,
     \begin{align*}
        \phi_\chi(f g) 
        &= \sum_{ a_{1},a_{2} \in \agpd_{ u } } \chi(a_{1} a_{2}) \chi(a_{2}\inv ) f(a_{1} a_{2}) g(a_{2}\inv ) 
        = \phi_\chi(f) \phi_\chi(g).
    \end{align*}
    
 To see that $\phi_\chi$ is $*$-preserving, we use Equation~\eqref{eq:chi-proj} to compute
    \begin{align}\label{eq:chi-a-inv}
    \chi (a) \overline{c(a,a\inv)} = \overline{\chi (a\inv)}\chi \bigl({ p }(a)\bigr) = \overline{\chi (a\inv)}.
 \end{align}
    Thus
    \[ \phi_\chi(f^{*}) = \sum_{a\in \agpd_{ u } } \chi(a) \overline{f(a\inv ) c(a, a\inv )} = \sum_{a\in\agpd_{ u } } \overline{\chi(a\inv ) f(a\inv )} = \sum_{a\in \agpd_{ u } } \overline{\chi(a) f(a)}.\]

    To see that $\phi_{\chi}\colon C_c(\agpd , c) \to \mbb{C}$ extends to an element of $\widehat{B}$, i.e.\ a non-zero norm-decreasing $*$-homomorphism $\phi_\chi\colon C_{r}^{*}(\agpd , c) \to \mbb{C}$, recall that the full $C^{*}$-algebra $C^{*}(\agpd , c)$ is the completion of $C_c(\agpd , c)$ in the norm \begin{equation}
    \|f\| := \sup \{ \| L(f)\|: L \text{ a $*$-representation of $C_c(\agpd , c)$, } \| L(f)\| \leq \|f\|_{I}\}.
    \label{eq:full-norm}
    \end{equation}
   In our case, where $\agpd  = \Iso(\agpd ),$ we have
   $\|f\|_{I} = \sup_{u \in  \agpd \z} \sum_{\agpd_{u}} |f(x)|$ for $f \in C_c(\agpd , c).$
   We have already shown that
   each $\phi_\chi$ is a *-representation of $C_c(\agpd , c)$, and by the triangle inequality, it is $I$-norm decreasing, i.e.\ one of the representations invoked in Equation \eqref{eq:full-norm}. In particular, $\| \phi_\chi(f)\| \leq \| f\|$ for any $f \in C_c(\agpd , c)$ and any $\chi \in \specB ,$ so the representation $\phi_\chi$
    extends to a nonzero  $*$-homomorphism $\phi_\chi\colon C^{*}(\agpd , c) \to \CC$. The claim now follows because $C^{*}(\agpd ,c) \cong C^{*}_{r}(\agpd ,c)$ by Remark~\ref{rmk:mcA-amenable}.
\end{proof}

\begin{lemma}
\label{lem:phi-surjective}\label{lem:phionetoone}
    The map $
    \phi\colon \specB \to \widehat{B}, \, \chi \mapsto \phi_\chi,$ is a bijection.
\end{lemma}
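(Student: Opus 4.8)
The plan is to prove injectivity and surjectivity separately, exploiting throughout that, because $\agpd$ is {\'e}tale, the open bisections (open sets $V$ on which $p$ restricts to a homeomorphism onto an open subset of $\agpd\z$) form a basis for the topology and each fiber $\agpd_u$ is discrete. The computational engine of the whole argument will be a single observation about bisection-supported functions: if $f\in C_c(\agpd,c)$ is supported on a bisection $V$, then (using that $c$ is normalized and symmetric, and that $p=r=s$) the product $f^{*}f$ lies in the subalgebra $C_0(\agpd\z)\sset B$, with $(f^{*}f)(v)=|f(\eta)|^2$ when $V\cap\agpd_v=\{\eta\}$ and $(f^{*}f)(v)=0$ when $V\cap\agpd_v=\emptyset$. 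I will also use the standard facts that $C_0(\agpd\z)$ embeds in $B$ as pointwise-multiplied functions on the unit space and contains an approximate identity for $B$.

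For injectivity I would first note that $\phi_\chi$ restricts on $C_0(\agpd\z)$ to evaluation at $\rho(\chi)$: among the elements of $\agpd_{\rho(\chi)}$ only the unit $\rho(\chi)$ lies in $\agpd\z$, and $\chi(\rho(\chi))=1$. Hence $\phi_{\chi_1}=\phi_{\chi_2}$ forces $\rho(\chi_1)=\rho(\chi_2)=:u$. Then for each $a\in\agpd_u$, using that $\agpd_u$ is discrete and $\agpd$ is Hausdorff, I would pick $f\in C_c(\agpd,c)$ supported on a bisection $V$ with $V\cap\agpd_u=\{a\}$ and $f(a)=1$; evaluating gives $\phi_{\chi_i}(f)=\chi_i(a)$, so $\chi_1=\chi_2$.

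Surjectivity is the substantive direction. Given $\psi\in\widehat B$, its restriction to $C_0(\agpd\z)$ is nonzero (by the approximate identity), hence a character, hence evaluation at some $u\in\agpd\z$; the goal is to build $\chi\in\specB_u$ with $\phi_\chi=\psi$. For $a\in\agpd_u$ and any bisection $V\ni a$, the identity $(f^{*}f)(u)=|f(a)|^2$ together with $\psi(f^{*}f)=|\psi(f)|^2$ shows that for $f\in C_c(V)$ the value $\psi(f)$ depends only on $f(a)$ and is unimodular when $f(a)=1$; I would therefore set $\chi(a):=\psi(f)$ for any such $f$ with $f(a)=1$, checking independence of $V$ by comparing on $V_1\cap V_2$. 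To verify $\chi(a)\chi(a')=c(a,a')\chi(aa')$ I would multiply bisection-supported functions $f\ni a$ and $g\ni a'$: their product is supported on the bisection $VV'$, satisfies $(fg)(aa')=f(a)g(a')c(a,a')=c(a,a')$, and multiplicativity of $\psi$ yields the projective relation.

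The final and most delicate step is checking $\phi_\chi=\psi$ on all of $C_c(\agpd,c)$ (and hence on $B$ by density). I would decompose an arbitrary $f\in C_c(\agpd,c)$ through a finite partition of unity into pieces $f_i$ supported on bisections $V_i$. For those $V_i$ disjoint from $\agpd_u$, the identity $(f_i^{*}f_i)(u)=0$ forces $\psi(f_i)=0$; for those meeting $\agpd_u$ in a single point $a_i$, one has $\psi(f_i)=\chi(a_i)f_i(a_i)$. Regrouping the surviving terms by the point of $\agpd_u$ then reassembles $\psi(f)=\sum_{a\in\agpd_u}\chi(a)f(a)=\phi_\chi(f)$. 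I expect the main obstacle to be exactly this bookkeeping --- confirming that $\psi$ annihilates the bisection pieces lying off $\agpd_u$ and that the remaining contributions recombine into $\phi_\chi$ --- all of which rests on the clean computation that $f^{*}f\in C_0(\agpd\z)$ takes the stated fiberwise values.
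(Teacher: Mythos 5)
Your proposal is correct, but it takes a genuinely different route from the paper. The paper's injectivity argument is essentially yours (separating characters on a fixed fiber by bisection-supported functions with prescribed value at a single point $a$; your extra step, identifying $\rho(\chi)$ from the restriction of $\phi_\chi$ to $C_0(\agpd\z)$, handles the different-base-point case more explicitly than the paper does). For surjectivity, however, the paper does not construct $\chi$ by hand: it observes that $B$ is a $C_0(\agpd\z)$-algebra, invokes the disintegration $\widehat{B}=\bigsqcup_{u\in\agpd\z}\widehat{B(u)}$ from the theory of $C_0(X)$-algebras, identifies each fiber $B(u)$ with the twisted group $C^{*}$-algebra $C^{*}_r(\agpd_u,c)$ (extending a result of Crytser--Nagy to the twisted setting), and then quotes the Busby--Smith correspondence between representations of twisted group algebras and projective representations. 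Your argument replaces all of that machinery with direct convolution-algebra computations: the restriction of $\psi\in\widehat B$ to $C_0(\agpd\z)$ pins down $u$, the identity $(f^{*}g)(u)=\overline{f(a)}\,g(a)$ for $f,g$ supported on a common bisection (which follows from your stated engine $f^{*}f\in C_0(\agpd\z)$ by polarization) defines $\chi(a)$ and shows it is well defined and unimodular, products of bisections give the projective relation, and the vanishing $\psi(h)=0$ for bisection-supported $h$ with $\supp'(h)\cap\agpd_u=\emptyset$ lets the partition-of-unity bookkeeping close the loop. What your approach buys is self-containedness: it avoids the $C_0(X)$-algebra decomposition, avoids the Busby--Smith theorem, and in particular avoids the point the paper itself flags as delicate, namely that the untwisted fiber identification of Crytser--Nagy needs ``a careful examination'' to carry over to nontrivial $c$. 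What the paper's route buys is brevity and a structural byproduct --- the explicit identification of the fibers $B(u)\cong C^{*}_r(\agpd_u,c)$ --- which is of independent interest. Both proofs rest on the same étale-groupoid facts (clopen unit space, spanning by bisection-supported functions), so your argument is a legitimate, more elementary substitute.
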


\begin{proof}
A moment's thought reveals that the map $\chi \mapsto \phi_\chi$ is injective: If $\chi \not= \chi'$ while $\rho(\chi)=\rho(\chi')$, there must exist $a \in \agpd_{\rho(\chi)}$ with $\chi(a) \not= \chi'(a)$.
Then, if  $f \in C_c(\agpd, c )$ is supported in a bisection containing $a$ and $f(a) = 1$, we have 
\[ \phi_\chi(f) = \chi(a) f(a) = \chi(a) \not= \chi'(a) = \phi_{\chi'}(f),\]
so $\phi_\chi \not= \phi_{\chi'}$.
Therefore, to show that $\phi$ is a bijection, it suffices to show that all one-dimensional representations of $C^{*}_r(\agpd_{u}, c)$ are of the form $\phi_\chi$ for some $\chi\in\specB$.

To that end, we observe 
that $B$ is a $C_0(\agpd\z)$-algebra (cf.~\cite[Remark 5.1]{crytser-nagy}).  Then \cite[Proposition C.5]{xprod} implies that
$\widehat B = \bigsqcup_{u \in \agpd\z} \widehat{B(u)}$.  When $c$ is trivial, \cite[Remark 5.2]{crytser-nagy} and Remark \ref{rmk:mcA-amenable} imply that $B(u) \cong C^{*}_r(\agpd_{u}, c)$. 
In fact,
a careful examination of the proof of \cite[Remark 5.2]{crytser-nagy} reveals that, even if $c$ is not trivial, the formulae used there will also give an isomorphism between the fiber algebra $B(u) $ and the twisted group $C^{*}$-algebra $C^{*}_r(\agpd_u, c)$.
 
It is a classical fact that representations of the twisted group $C^{*}$-algebra $C^{*}(\agpd_{u}, c) \cong C^{*}_r(\agpd_{u}, c) \cong B(u)$ are in bijection with unitary projective representations  of $\agpd_{u}$. 
In particular,  the one-dimensional representations of $B(u)$ are of the form
\[ \pi(f) = \sum_{a \in \agpd_{u}}  \chi(a) f(a) \quad \text{for all } f\in C_c (\agpd_{u},c)\]
for some $\chi \in \specB_{u}$, see the formula in \cite[Theorem 3.3(2)]{BS:Reps-twisted-gp}. 
Comparing the above with the definition of $\phi$, we see that indeed, the map $\chi \mapsto \phi_\chi$ is surjective, and hence a bijection.
\end{proof}

\begin{prop}\label{shortened-prop:mcB-cong-widehatB}
    If we equip $\specB$ with the topology induced by $\phi$ from $\widehat{B}$, then a net $(\chi_i)_i$ converges to $\chi$ in $\specB$ if and only if the following two conditions hold:
    \begin{enumerate}[label=\textup{(\arabic*)}] 
    \item\label{item1:convergence-in-specB}  $\rho(\chi_{i}) \to \rho(\chi)$ in $\agpd\z$; 
    \item\label{item2:convergence-in-specB} whenever $( a_{i} )_{i} \subseteq \agpd $ satisfies ${ p }( a_{i} ) = \rho(\chi_{i}),$ ${ p }(a) = \rho(\chi)$, and $ a_{i} \to a $ in $\agpd $, then $\chi_{i}( a_{i} ) \to \chi(a)$ in $\mbb{T}$.
    \end{enumerate} 
\end{prop}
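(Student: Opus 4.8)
The plan is to reduce the statement to a concrete convergence criterion on $C_c(\agpd,c)$ and then prove the two implications using the {\'e}tale structure of $\agpd$. Since $\phi$ is a bijection (Lemma~\ref{lem:phi-surjective}) and $\specB$ carries the topology induced from the weak-$*$ topology on $\widehat B$ via $\phi$, a net $\chi_i\to\chi$ in $\specB$ exactly when $\phi_{\chi_i}(b)\to\phi_\chi(b)$ for every $b\in B$. Because each $\phi_\chi$ is a character, hence norm-decreasing, and $C_c(\agpd,c)$ is dense in $B=C^*_r(\agpd,c)$, a routine $\varepsilon/3$ estimate shows this is equivalent to $\phi_{\chi_i}(f)\to\phi_\chi(f)$ for every $f\in C_c(\agpd,c)$. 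Thus I would work throughout with the explicit formula $\phi_\chi(f)=\sum_{a\in\agpd_{\rho(\chi)}}\chi(a)f(a)$, and it remains to show that convergence of these quantities for all $f\in C_c(\agpd,c)$ is equivalent to \ref{item1:convergence-in-specB} and \ref{item2:convergence-in-specB}.

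For the forward implication, assume $\phi_{\chi_i}(f)\to\phi_\chi(f)$ for all $f$. To obtain \ref{item1:convergence-in-specB} I would test against units: since $\agpd$ is {\'e}tale, $\agpd\z$ is open in $\agpd$, so any $g\in C_c(\agpd\z)$ extends by zero to $\tilde g\in C_c(\agpd,c)$, and because the only unit in $\agpd_{\rho(\chi)}$ is $\rho(\chi)$ itself, where $\chi$ takes the value $1$, one computes $\phi_\chi(\tilde g)=g(\rho(\chi))$. Hence $g(\rho(\chi_i))\to g(\rho(\chi))$ for all such $g$, which forces $\rho(\chi_i)\to\rho(\chi)$. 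For \ref{item2:convergence-in-specB}, take an arbitrary net $a_i\to a$ with $p(a_i)=\rho(\chi_i)$ and $p(a)=\rho(\chi)$, choose an open bisection $U\ni a$, and pick $f\in C_c(\agpd,c)$ supported in $U$ with $f(a)=1$. Eventually $a_i\in U$, and since $U$ is a bisection, $a_i$ is the unique point of $\agpd_{\rho(\chi_i)}\cap U$, so the sum collapses to $\phi_{\chi_i}(f)=\chi_i(a_i)f(a_i)$; likewise $\phi_\chi(f)=\chi(a)$. As $f(a_i)\to f(a)=1\neq 0$, it follows that $\chi_i(a_i)=\phi_{\chi_i}(f)/f(a_i)\to\chi(a)$.

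The reverse implication is where the real work lies, since the defining sum for $\phi_\chi(f)$ is a priori infinite and must be controlled uniformly along the net. First I would use a partition of unity subordinate to a finite cover of the compact set $\supp f$ by open bisections to reduce, by linearity of $\phi_\chi$ in $f$, to the case where $f$ is supported in a single open bisection $U$. Assuming \ref{item1:convergence-in-specB} and \ref{item2:convergence-in-specB}, I would then split into two cases according to whether $\rho(\chi)$ lies in the open set $p(U)$. If $\rho(\chi)\in p(U)$, then eventually $\rho(\chi_i)\in p(U)$, and writing $a_i=(p|_U)\inv(\rho(\chi_i))$ and $a=(p|_U)\inv(\rho(\chi))$, continuity of $(p|_U)\inv$ gives $a_i\to a$; the sums collapse to single terms as before, and \ref{item2:convergence-in-specB} together with continuity of $f$ yields $\phi_{\chi_i}(f)=\chi_i(a_i)f(a_i)\to\chi(a)f(a)=\phi_\chi(f)$. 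If $\rho(\chi)\notin p(U)$, then $\phi_\chi(f)=0$, and the point is to rule out fiber elements escaping into $\supp f$: if $\phi_{\chi_i}(f)$ failed to converge to $0$, a subsequence would keep the relevant $a_i$ in the compact set $\{|f|\ge\varepsilon\}\subseteq\supp f\subseteq U$, and extracting a convergent sub-subsequence $a_i\to a'\in U$ would force $p(a')=\lim\rho(\chi_i)=\rho(\chi)\in p(U)$, a contradiction.

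I expect the main obstacle to be precisely this last case of the reverse implication: it is where the infinitude of the fiber sum genuinely matters, and where the interaction between \ref{item1:convergence-in-specB}, the openness of $p$, and the compactness of $\supp f$ has to be orchestrated carefully. The reduction to a single bisection via a partition of unity is the device that tames the sum, after which each piece contributes at most one term per fiber and the bisection homeomorphism $p|_U$ transports the base-point convergence of \ref{item1:convergence-in-specB} into the fiber-element convergence needed to invoke \ref{item2:convergence-in-specB}.
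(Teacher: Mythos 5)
Your proposal is correct and takes essentially the same approach as the paper: your forward implication reproduces Lemma~\ref{lem:phi-open} (testing $\phi_{\chi_i}$ against Urysohn-type functions supported in $\agpd\z$, respectively in a bisection around $a$, and collapsing the fiber sums), while your reverse implication combines the bisection-collapse argument of Lemma~\ref{lem:phi-continuous} with the compactness contradiction of Lemma~\ref{lem:not-in-support}. The only differences are cosmetic: you use an explicit partition of unity where the paper cites the span result \cite[Lemma 3.1.3]{AidansNotes}, and your word ``subsequence'' in the final escape argument should read ``subnet,'' since the statement concerns nets.
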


\begin{rmk}
    Note that, with respect to this topology on $\specB$, $\rho\colon \specB\to \agpd\z$ is clearly continuous. Note further that, when $c$ is trivial, this result is well-known; the description of the topology on $\specB$ should be compared to \cite[Proposition 3.3]{MRW96}.
\end{rmk}

  The proof of Proposition \ref{shortened-prop:mcB-cong-widehatB} proceeds through a series of lemmata.

\begin{lemma}
\label{lem:not-in-support}
    Suppose $(\chi_i)_{i\in I}$ and $\chi$ are elements of $\specB$ %
    satisfying Condition~\ref{item1:convergence-in-specB}
    of 
    Proposition~\ref{shortened-prop:mcB-cong-widehatB}. If $f\in C_c (\agpd,c)$ is supported in a bisection and $\supp'(f)\cap \agpd_{\rho(\chi)}=\emptyset$, then $\phi_{\chi_i}(f)\to 0.$
\end{lemma}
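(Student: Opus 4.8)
The plan is to dominate $\abs{\phi_{\chi_i}(f)}$ by the value at $\rho(\chi_i)$ of a continuous function on $\agpd\z$ that vanishes at $\rho(\chi)$, and then let Condition~\ref{item1:convergence-in-specB} and continuity do the rest. Because each $\chi_i$ takes values in $\mbb{T}$, the triangle inequality gives
\[
    \abs{\phi_{\chi_i}(f)}
    =
    \Bigl\lvert \sum_{a\in\agpd_{\rho(\chi_i)}} \chi_i(a) f(a) \Bigr\rvert
    \le
    \sum_{a\in\agpd_{\rho(\chi_i)}} \abs{f(a)}
    =: F(\rho(\chi_i)),
\]
where $F\colon \agpd\z \to [0,\infty)$ is defined by $F(u) := \sum_{a\in\agpd_u} \abs{f(a)}$. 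The hypothesis $\supp'(f)\cap\agpd_{\rho(\chi)} = \emptyset$ says precisely that $f$ vanishes on the whole fiber over $\rho(\chi)$, so $F(\rho(\chi)) = 0$. Everything therefore reduces to showing that $F$ is continuous: once this is known, Condition~\ref{item1:convergence-in-specB} forces $F(\rho(\chi_i)) \to F(\rho(\chi)) = 0$, and the sandwich $0 \le \abs{\phi_{\chi_i}(f)} \le F(\rho(\chi_i))$ yields $\phi_{\chi_i}(f)\to 0$.

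The continuity of $F$ is where the bisection hypothesis is used, and is the main step. Let $V$ be an open bisection containing $\supp'(f)$, and let $\sigma := (p\vert_V)\inv\colon p(V)\to V$ be the continuous inverse of the homeomorphism $p\vert_V$. Since $V$ is a bisection, each fiber meets $\supp'(f)\sset V$ in at most one point, so the sum defining $F$ has at most one nonzero term, and
\[
    F(u) = \begin{cases} \abs{f(\sigma(u))}, & u\in p(V),\\ 0, & u\notin p(V).\end{cases}
\]
On the open set $p(V)$ this is continuous, being the composition of the continuous maps $\sigma$ and $\abs{f}$. What remains is continuity at a unit $u_0\in\agpd\z\setminus p(V)$, i.e.\ that $F(u_j)\to 0$ for every net $u_j\to u_0$; I may assume $u_j\in p(V)$, since $F$ vanishes off $p(V)$.

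For this I would exploit compactness of $\supp(f)$ via a subnet argument, writing $a_j := \sigma(u_j)\in V$. Given any subnet, either cofinally many of its terms leave $\supp(f)$ — in which case a further subnet has $\abs{f(a_j)} = 0$ — or the terms eventually lie in the compact set $\supp(f)$, so a further subnet satisfies $a_{j'}\to a_\infty\in\supp(f)$. In the latter case continuity of $p$ gives $p(a_\infty) = u_0\notin p(V)$, hence $a_\infty\notin V$ and therefore $a_\infty\notin\supp'(f)$, so $\abs{f(a_\infty)} = 0$; continuity of $\abs{f}$ then forces $\abs{f(a_{j'})}\to 0$. Thus every subnet of $\bigl(F(u_j)\bigr)_j$ has a further subnet converging to $0$, so $F(u_j)\to 0$, establishing continuity of $F$ at $u_0$ and completing the proof. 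The one subtlety to handle carefully is exactly this passage to subnets, which is what allows the argument to cope with the possibility that $\supp(f)$ accumulates on the fiber over a boundary unit $u_0\notin p(V)$.
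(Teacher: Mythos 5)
Your proof is correct, and it reaches the result by a somewhat different decomposition than the paper, though the technical heart is the same. The paper argues by contradiction: assuming $\phi_{\chi_i}(f)\not\to 0$, it extracts a subnet along which $\abs{\phi_{\chi_j}(f)}>\epsilon$, which (since $f$ is supported in a bisection) produces points $a_j\in\supp'(f)\cap\agpd_{\rho(\chi_j)}$; compactness of $\supp(f)$ then gives a further subnet $a_\kappa\to a$ with $\abs{f(a)}\geq\epsilon$ and $p(a)=\lim_\kappa\rho(\chi_\kappa)=\rho(\chi)$, contradicting the hypothesis $\supp'(f)\cap\agpd_{\rho(\chi)}=\emptyset$. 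You instead dominate $\abs{\phi_{\chi_i}(f)}$ by $F(\rho(\chi_i))$, where $F(u)=\sum_{a\in\agpd_u}\abs{f(a)}$, and reduce everything to continuity of $F$, which you then combine with Condition~\ref{item1:convergence-in-specB} and the observation $F(\rho(\chi))=0$. Your key step --- continuity of $F$ at a unit $u_0\notin p(V)$ --- is proved by exactly the same compactness-of-$\supp(f)$ subnet argument, run in contrapositive form: your limit point $a_\infty$ is forced outside $\supp'(f)$ because $p(a_\infty)=u_0$, whereas the paper derives a contradiction because $p(a)=\rho(\chi)$. What your packaging buys is a direct (non-contradiction) proof and a reusable, slightly stronger intermediate fact, namely that the fiberwise sum $u\mapsto\sum_{a\in\agpd_u}\abs{f(a)}$ of any function supported in a bisection is continuous on $\agpd\z$. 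Two small points you should make explicit: the existence of an \emph{open} bisection $V\supseteq\supp'(f)$ is not literally part of the hypothesis, but it is immediate since $\supp'(f)$ is open and any subset of a bisection is again a bisection; and it is {\'e}taleness of $\agpd$ that makes $p(V)$ open and $p\vert_V$ a homeomorphism onto it, which your construction of $\sigma$ requires.
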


\begin{proof}
    Assume for a contradiction that 
    $\phi_{\chi_i}(f)\not\to 0$, i.e.\ 
    there exists $\epsilon>0$ so that for all $i\in I $, there exists $g(i)\in  I $ with $g(i)\geq i$ such that
    \(  \abs{\phi_{\chi_{g(i)}}(f)}
        >\epsilon.
    \) In particular, for each $ j $ in  
    \(
        J:= \{ g(i) \,\vert\, i\in I \},
    \)
    we have with $u_i :=\rho(\chi_i)$, 
    \begin{align*}
\epsilon < \abs{\phi_{\chi_{ j }}(f)}  &= \abs{\sum_{ a \in \agpd_{u_{ j }}} \chi_{ j }(a) f(a)} =\begin{cases}
|f(a_{ j })|, & \{a_{ j }\} = \supp'(f) \cap \agpd_{u_{ j }} \neq \emptyset \\
0, & \text{otherwise},
\end{cases}
\end{align*}
so there must exist an element $a_j $ in $\supp'(f)\cap \agpd_{ u_j }$.
    
    Note that $J$ is a directed set when equipped with $ I $'s preorder: if we take $ j_{1}, j_{2}\in J,$ then since $ I $ is directed, there exists $i \in I$ with $i \geq  j_{1}, j_{2}$. Then $g(i)\geq i$, so $g(i)$ is an upper bound for $ j_{1}$ and $ j_{2}$ in $J$. Firstly, this implies that $(u_{ j })_{ j \in J}$ is a subnet of $(u_{i})_{i\in I }$ (the inclusion $J\hookrightarrow  I $ is monotone and final),
    so that
    Condition~\ref{item1:convergence-in-specB} of Proposition~\ref{shortened-prop:mcB-cong-widehatB} implies $\lim_{ j \in J}u_{ j }= \lim_{i\in I } u_i = \rho(\chi)$. 
    Secondly,
    we conclude
    that $(a_{ j })_{ j \in J}$ is a net in $\supp'(f)$. Since $\supp(f)$ is compact, there exists a subnet $(a_{\kappa})_{\kappa\in K}$ of $(a_{ j })_{ j \in J}$ which converges to some $a\in \supp(f)$. By continuity of $f$, we have
    \[
        \abs{f(a)} = \lim_{\kappa} \abs{f(a_{\kappa})} \geq \epsilon,
    \]
    i.e.\ $a\in \supp'(f)$. But 
    \(p(a)=\lim_{\kappa} p(a_\kappa) = \lim_{\kappa} u_{\kappa} = \rho(\chi),\) i.e.\ \(
a\in \agpd_{ \rho(\chi)}
    \)
    also, which contradicts our hypothesis that $\supp'(f) \cap \agpd_{\rho(\chi)} = \emptyset.$
\end{proof}

\begin{lemma}\label{lem:phi-continuous}
    Suppose $(\chi_i)_{i\in I}$ and $\chi$ are elements of $\specB$    satisfying Conditions~\ref{item1:convergence-in-specB} and~\ref{item2:convergence-in-specB} of
    Proposition~\ref{shortened-prop:mcB-cong-widehatB}. Then $\phi_{\chi_i}\to\phi_\chi$ in $\widehat{B}$. 
\end{lemma}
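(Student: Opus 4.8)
The plan is to prove weak-$*$ convergence $\phi_{\chi_i}\to\phi_\chi$ in $\widehat B\subseteq B^{*}$, i.e.\ that $\phi_{\chi_i}(b)\to\phi_\chi(b)$ for every $b\in B$. Since each $\phi_\chi$ extends to a $*$-homomorphism $C^{*}_r(\agpd,c)\to\CC$ by Lemma~\ref{lem:phi-chi-alg-repn}, the functionals $\phi_{\chi_i}$ and $\phi_\chi$ all have norm at most $1$. A standard $\epsilon/3$ argument then reduces the claim to proving $\phi_{\chi_i}(f)\to\phi_\chi(f)$ only for $f$ in the dense subalgebra $C_c(\agpd,c)$: given $b\in B$ and $\epsilon>0$, choose $f\in C_c(\agpd,c)$ with $\norm{b-f}<\epsilon/3$, and bound $\abs{\phi_{\chi_i}(b)-\phi_\chi(b)}$ by $\norm{b-f}+\abs{\phi_{\chi_i}(f)-\phi_\chi(f)}+\norm{f-b}$, where the middle term goes to $0$ by the reduced statement and the outer terms are each below $\epsilon/3$ by the norm bound.

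Next I would reduce to the case where $f$ is supported in a single open bisection. Since $\agpd$ is {\'e}tale, $\supp(f)$ is compact and can be covered by finitely many open bisections $V_1,\dots,V_n$; choosing a partition of unity $\{\psi_k\}$ subordinate to this cover with $\sum_k\psi_k\equiv 1$ on $\supp(f)$, we may write $f=\sum_k \psi_k f$ with each $\psi_k f\in C_c(\agpd,c)$ supported in $V_k$. By linearity of the functionals it suffices to treat each $\psi_k f$, so from now on assume that $f$ is supported in an open bisection $V$.

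Write $u:=\rho(\chi)$ and $u_i:=\rho(\chi_i)$. Because $V$ is a bisection and $r=s=p$ on $\agpd$, the restriction $p|_V$ is a homeomorphism onto an open set $p(V)\sset\agpd\z$, so $\agpd_w\cap V$ is a single point for $w\in p(V)$ and empty otherwise; consequently $\phi_{\chi_i}(f)$ reduces to a single term of its defining sum. There are two cases. If $\supp'(f)\cap\agpd_u=\emptyset$, then $\phi_\chi(f)=0$, and Lemma~\ref{lem:not-in-support} (whose hypothesis is exactly Condition~\ref{item1:convergence-in-specB}) gives $\phi_{\chi_i}(f)\to 0=\phi_\chi(f)$. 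Otherwise $\supp'(f)\cap\agpd_u=\{a_0\}$, so $\phi_\chi(f)=\chi(a_0)f(a_0)$. Here $u=p(a_0)\in p(V)$, which is open, so $u_i\in p(V)$ eventually; for such $i$ set $a_i:=(p|_V)\inv(u_i)$, which satisfies $\phi_{\chi_i}(f)=\chi_i(a_i)f(a_i)$ and, by continuity of $(p|_V)\inv$, converges to $a_0$. Continuity of $f$ gives $f(a_i)\to f(a_0)$, and Condition~\ref{item2:convergence-in-specB} applied to the net $(a_i)$ gives $\chi_i(a_i)\to\chi(a_0)$; multiplying these bounded convergent nets yields $\phi_{\chi_i}(f)\to\chi(a_0)f(a_0)=\phi_\chi(f)$.

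The argument is essentially bookkeeping once these reductions are in place; the heaviest lifting in the disjoint case is done by Lemma~\ref{lem:not-in-support}. The one point requiring care is that $a_i$ is only defined on a tail of the net (where $u_i\in p(V)$), which is harmless for net convergence, and that Condition~\ref{item2:convergence-in-specB} must be applied precisely to the net $(a_i)$ converging to $a_0$ with $p(a_i)=\rho(\chi_i)$ and $p(a_0)=\rho(\chi)$.
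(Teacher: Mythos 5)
Your proof is correct and follows essentially the same route as the paper's: reduce to functions supported in a single bisection, handle the empty-intersection case via Lemma~\ref{lem:not-in-support}, and in the remaining case use \'etaleness to produce a net $a_i \to a_0$ with $p(a_i)=\rho(\chi_i)$ to which Condition~\ref{item2:convergence-in-specB} applies. The only differences are cosmetic: you spell out the density/$\epsilon/3$ reduction and prove the bisection decomposition by a partition of unity, where the paper leaves the former implicit and cites \cite[Lemma 3.1.3]{AidansNotes} for the latter.
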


\begin{proof}
We must show that, 
for all $f \in C_c(\agpd ,c)$ and for all $ \epsilon >0$, there exists  $i_0 
\in I
$  such that, if $i \geq i_0$, then $|\phi_{\chi_i}(f) - \phi_\chi(f)| < \epsilon$.

We will begin by proving the claim for $f \in C_c(\agpd ,c)$ such that $\supp(f)$ is a bisection. 
Let $u:=\rho(\chi)$ and $u_i :=\rho(\chi_i)$. 
If $\supp'(f) \cap \agpd_{u} = \emptyset$, then $\phi_\chi (f)=0$ and Lemma~\ref{lem:not-in-support} yields $\phi_{\chi_i}(f) \to 0 = \phi_\chi(f)$, as claimed. Otherwise,    
fix $\epsilon >0$ and let $a \in \supp'(f)$ such that $p(a)=u$. Since $f(a) \neq 0$ and $f$ is continuous,  there exists an open neighborhood $V$ of $a$ on which $f$ is nonzero.
In fact, $V$ is a bisection around $a$ because $V \subseteq \supp'(f)$, and $p(V)$ is an open neighborhood of $u$ because $\agpd$ is {\'e}tale. As $u_i\to u$, by shrinking the neighborhoods $V$ we see that $\{a_i\in V\,\vert\, p(a_i)=u_i\}$ is a net in $\agpd$ converging to $a$.
In this case,
\begin{align*}
\abs{\phi_{\chi_i}(f) - \phi_\chi(f)}
&= \abs{ \sum_{a_i' \in \agpd_{u_i}} \chi_{i}(a_i') f(a_i') - \sum_{a' \in \agpd_{u}} \chi(a') f(a')}
\\
&= \abs{\chi_i(a_i)f(a_i) - \chi(a)f(a)}.
\end{align*}
If we now use the fact that $f \in C_c(\agpd, c)$ is bounded in $\norm{\cdot}_{\infty}$
and our hypothesis that $(\chi_i)_i$ and $\chi$ satisfy Condition~\ref{item2:convergence-in-specB} of Proposition~\ref{shortened-prop:mcB-cong-widehatB}, an easy $\epsilon/2$-argument establishes that $|\phi_{\chi_i}(f) - \phi_\chi(f)| < \epsilon $ for $i \geq i_0$ for some $i_0 \in I$.

For more general functions, recall that since $\agpd $ is a second countable, locally compact Hausdorff, {\'e}tale groupoid, we have
\[ C_c(\agpd ,c) = \text{span}\{f \in C_c(\agpd ,c) \mid \supp(f) \text{ is a bisection}\},\]
see~\cite[Lemma 3.1.3]{AidansNotes}.
An $\epsilon/k$-argument now shows that, for any $g \in C_c(\agpd, c),$ there exists $i_1\in I$ so that $i\geq i_1$ implies $|\phi_{\chi_i}(g) - \phi_\chi(g)| < \epsilon$. \qedhere
\end{proof}

\begin{lemma}\label{lem:phi-open}
    Let $(\chi_{i})_{i\in\Lambda}$ be some net in $\specB $ such that $\phi_{\chi_{i}} \to \phi_\chi$ for some $\chi\in\specB $. Then 
    $(\chi_i)_i$ and $ \chi$ satisfy Conditions~\ref{item1:convergence-in-specB} and~\ref{item2:convergence-in-specB} of Proposition~\ref{shortened-prop:mcB-cong-widehatB}.
\end{lemma}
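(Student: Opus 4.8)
The plan is to unpack the weak-$*$ convergence hypothesis and test it against two carefully chosen families of functions, one adapted to each condition. Since $\widehat B$ carries the weak-$*$ topology and $C_c(\agpd,c)\subseteq B$, the hypothesis $\phi_{\chi_i}\to\phi_\chi$ gives immediately that $\phi_{\chi_i}(f)\to\phi_\chi(f)$ for every $f\in C_c(\agpd,c)$. The whole proof then consists of choosing the right test functions $f$ so that the sum defining $\phi_\chi$ collapses to a single, readable term.

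For Condition~\ref{item1:convergence-in-specB}, I would test against functions supported on the unit space. Because $\agpd$ is {\'e}tale, $\agpd\z$ is open in $\agpd$, so any $f\in C_c(\agpd\z)$ extends by zero to an element of $C_c(\agpd,c)$. For such $f$ the defining sum for $\phi_\chi$ has only one nonzero summand: the sole unit in the isotropy group $\agpd_{\rho(\chi)}$ is $\rho(\chi)$ itself, and $\chi(\rho(\chi))=1$, so $\phi_\chi(f)=f(\rho(\chi))$. The hypothesis then yields $f(\rho(\chi_i))\to f(\rho(\chi))$ for every $f\in C_c(\agpd\z)$. A standard contradiction argument in the locally compact Hausdorff space $\agpd\z$ finishes this: if $\rho(\chi_i)\not\to\rho(\chi)$, pass to a subnet avoiding some open neighborhood $W$ of $\rho(\chi)$ and pick, via \cite[Proposition 4.32]{Folland}, an $f$ with $f(\rho(\chi))=1$ and $\supp(f)\subseteq W$; then $f(\rho(\chi_i))=0\not\to 1$, a contradiction. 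Hence $\rho(\chi_i)\to\rho(\chi)$.

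For Condition~\ref{item2:convergence-in-specB}, I would take an arbitrary net $(a_i)$ with $p(a_i)=\rho(\chi_i)$, $p(a)=\rho(\chi)$, and $a_i\to a$, and test against a function supported on a bisection around $a$. Choose an open bisection $V\ni a$ (these form a basis since $\agpd$ is {\'e}tale) and, by Urysohn, $f\in C_c(\agpd,c)$ with $\supp(f)\subseteq V$ and $f(a)\neq 0$. Since $V$ is a bisection, $p|_V$ is injective, so $V$ meets each fiber $\agpd_u$ in at most one point; because $a\in V\cap\agpd_{\rho(\chi)}$, this intersection is exactly $\{a\}$, whence $\phi_\chi(f)=\chi(a)f(a)$. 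For $i$ large enough that $a_i\in V$ (possible as $a_i\to a\in V$ open) the same reasoning gives $V\cap\agpd_{\rho(\chi_i)}=\{a_i\}$ and hence $\phi_{\chi_i}(f)=\chi_i(a_i)f(a_i)$. Convergence then yields $\chi_i(a_i)f(a_i)\to\chi(a)f(a)$; since $f$ is continuous and $f(a)\neq 0$, we have $f(a_i)\to f(a)\neq 0$, so $f(a_i)\neq 0$ eventually, and dividing gives $\chi_i(a_i)\to\chi(a)$ in $\mathbb{T}$.

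I expect this to be mostly bookkeeping rather than deep. The decisive structural input is that in an {\'e}tale groupoid the open bisections form a basis and each meets every isotropy fiber at most once, which is precisely what collapses the defining sum for $\phi_\chi$ to a single summand and lets us isolate the character value. The only points needing a little care are checking that the extension-by-zero function and the Urysohn function genuinely lie in $C_c(\agpd,c)$, and justifying the final division by $f(a_i)$, which is legitimate because $f(a_i)\to f(a)\neq 0$.
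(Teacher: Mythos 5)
Your proof is correct and follows essentially the same strategy as the paper's: test the weak-$*$ convergence against Urysohn functions supported in the (open) unit space for Condition~\ref{item1:convergence-in-specB}, and against Urysohn functions supported in an open bisection around $a$ for Condition~\ref{item2:convergence-in-specB}, using in both cases that a bisection meets each fiber $\agpd_u$ at most once to collapse the sum defining $\phi_\chi$ to a single term. The only cosmetic differences are that the paper argues Condition~\ref{item1:convergence-in-specB} directly rather than by contradiction on a subnet, and for Condition~\ref{item2:convergence-in-specB} it chooses $f\equiv 1$ on a precompact neighborhood of $a$, which makes $f(a_i)=1$ exactly and avoids your (correctly justified) division by $f(a_i)$.
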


\begin{proof}
    Recall that our assumption $\phi_{\chi_{i}} \to \phi_\chi$ means that, for every $f \in C_c(\agpd , c)$ and $\epsilon >0,$ there exists $N_{f, \epsilon} \in \Lambda$ such that  if $i \geq N_{f, \epsilon}$ then $| \phi_{\chi_{i}}(f) - \phi_\chi(f)| < \epsilon$.

We start by proving \ref{item1:convergence-in-specB} of Proposition~\ref{shortened-prop:mcB-cong-widehatB}. Recall first that the unit space in an {\'e}tale groupoid is clopen (\cite[Lemma 2.4.2]{AidansNotes}), so that $u_{i}\to u$ in $\agpd\z$ if and only if $u_{i}\to u$ in $\agpd $. 
Let $V\subseteq \agpd \z$  be an open neighborhood of $u.$
Since $\agpd $ is locally compact Hausdorff and $V$ is an open neighborhood of $u$ in $\agpd $, there exists by 
  \cite[4.32 (Urysohn's Lemma)]{Folland} a function $f$ in $C_{c}(\agpd ,c)$ with $f (u) = 1$ and $f_{\vert \agpd \setminus V}\equiv 0$. Since $\phi_{\chi_{i}} \to \phi_\chi$, then for any fixed $1 > \epsilon >0,$ there exists an $M\in\Lambda $ such that if $i \geq M$, then
\[
| \phi_{\chi_{i}}(f) - \phi_\chi(f)| < \epsilon < 1,\]
which, by definition of $\phi$, implies
\[
\abs{ \sum_{a' \in \agpd_{u_{i}}} \chi_{i}(a')f(a') - \sum_{a' \in \agpd_{u}} \chi(a')f(a') } <1.\]
As 
$\agpd\z \supseteq V$
is a bisection
containing $\supp'(f)$, and $\xi(v) = 1$ for any $\xi \in \specB$ and $v \in \agpd\z$, the above inequality becomes \[\abs{\chi_i(u_i) f(u_i) - \chi(u) f(u)} = \abs{f(u_i) - 1} < 1,\]
for all $i \geq M$.
Therefore, if $ i \geq M$, then $u_i \in \supp'(f) \subseteq V$.
This concludes the proof of \ref{item1:convergence-in-specB}.

We proceed with proving \ref{item2:convergence-in-specB}. Suppose $( a_{i} )_{i} \sset \agpd $ such that ${ p }( a_{i} ) = u_{i}$, ${ p }(a) = u$, and $ a_{i} \to a$. Fix $\epsilon >0$. We must show there exists $M \in  \Lambda $ such that if $i \geq M$, then $|\chi_{i}( a_{i} ) - \chi(a)|<\epsilon.$ By \cite[Lemma 2.4.9]{AidansNotes}, there exists an 
open bisection $W$ in $\agpd $ that contains $a$. Since $\agpd $ is locally compact Hausdorff, there exists by \cite[Proposition 4.31]{Folland} a precompact open set $U$ with $a\in U \sset \overline{U} \sset W$. Since $ a_{i} \to a$, there exists $N \in  \Lambda$ such that if $i \geq N$, then $ a_{i} \in U$.

Again by \cite[4.32 (Urysohn's Lemma)]{Folland}, there exists $f\in C_{c}(\agpd ,c)$ which is equal to $1$ on $\overline{U}$ and $0$
outside of $W$.
So for all $i$ in $\Lambda$ which are larger than both $N$ and $N_{f,\epsilon}$, 
we know $ a_{i} \in U \subseteq  \supp (f)$ and 
\[
\abs{ \sum_{a' \in \agpd_{u_{i}}} \chi_{i}(a')f(a') - \sum_{a' \in \agpd_{u}} \chi(a')f(a') }< \epsilon.
\] 
Note that $W$ is a bisection, and $ a_{i} ,a$ are elements of $U \sset W$ with ${ p }( a_{i} ) = u_{i}$ and ${ p }(a)=u$. All of these facts combined yield that $ a_{i} $ is the 
unique
element in $\agpd_{u_{i}} \cap U$ and $a$ is the unique element in $\agpd_{u} \cap U$.
Since $f$ is
equal to $1$ on $U$, the inequality becomes 
\[
|  \chi_{i}( a_{i} ) -  \chi(a) |< \epsilon.
\]
This completes the proof of the lemma, and of Proposition~\ref{shortened-prop:mcB-cong-widehatB}.
\end{proof}

\begin{cor}\label{cor:specB-is-spectrum}
    The map $\specB \to \widehat{B}, \, \chi \mapsto \phi_\chi,$ is a homeomorphism when $\specB$ is equipped with the topology described in Proposition~\ref{shortened-prop:mcB-cong-widehatB}.  In particular, $\specB$ is locally compact Hausdorff and $B$ is isomorphic to the $C^{*}$-algebra $C_0(\specB )$. 
\end{cor}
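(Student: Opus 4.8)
The plan is to assemble Corollary~\ref{cor:specB-is-spectrum} from the pieces already established in this section, recognizing that all the substantive work has been done. The map $\phi\colon\specB\to\widehat B$ is a bijection by Lemma~\ref{lem:phionetoone}, so it remains only to verify that it is a homeomorphism with respect to the net-convergence criterion of Proposition~\ref{shortened-prop:mcB-cong-widehatB}, and then to transport the standard topological properties of $\widehat B$ back to $\specB$.

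First I would recall that $\specB$ is defined to carry precisely the topology induced by $\phi$ from $\widehat B$; under this definition, $\phi$ is tautologically a homeomorphism onto its image, and since $\phi$ is a bijection by Lemma~\ref{lem:phionetoone}, it is a homeomorphism onto all of $\widehat B$. The content of Proposition~\ref{shortened-prop:mcB-cong-widehatB} is that this induced topology is characterized concretely by Conditions~\ref{item1:convergence-in-specB} and~\ref{item2:convergence-in-specB}: Lemma~\ref{lem:phi-continuous} shows that if a net satisfies those two conditions then $\phi_{\chi_i}\to\phi_\chi$, while Lemma~\ref{lem:phi-open} shows the converse. Since a topology on a first-countable (indeed, any) space is determined by its convergent nets, these two lemmata together complete the proof of Proposition~\ref{shortened-prop:mcB-cong-widehatB}, and hence confirm that $\phi$ is a homeomorphism for the explicitly described topology.

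For the final sentence, I would invoke the Gelfand--Naimark Theorem. Since $B = C^*_r(\agpd,c)$ is a commutative $C^*$-algebra, its spectrum $\widehat B$ is locally compact Hausdorff and $B\cong C_0(\widehat B)$. Transporting these facts across the homeomorphism $\phi$ yields that $\specB$ is locally compact Hausdorff and that $B\cong C_0(\specB)$, as claimed.

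There is no real obstacle here: the Corollary is a bookkeeping statement that packages the three preceding lemmata and the Gelfand--Naimark Theorem. The only point requiring slight care is the logical relationship between ``the topology induced by $\phi$'' and ``the topology described by Conditions~\ref{item1:convergence-in-specB}--\ref{item2:convergence-in-specB}''; one must be explicit that Lemmata~\ref{lem:phi-continuous} and~\ref{lem:phi-open} jointly establish that these two topologies have the same convergent nets and are therefore identical, so that the tautological homeomorphism for the induced topology becomes the asserted homeomorphism for the concretely described one.
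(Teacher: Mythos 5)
Your proposal is correct and takes essentially the same approach the paper intends: the paper states Corollary~\ref{cor:specB-is-spectrum} with no separate proof, treating it as the immediate assembly of Lemma~\ref{lem:phionetoone} (bijectivity of $\phi$), Proposition~\ref{shortened-prop:mcB-cong-widehatB} (whose proof via Lemmata~\ref{lem:phi-continuous} and~\ref{lem:phi-open} shows the induced topology is exactly the one characterized by Conditions~\ref{item1:convergence-in-specB} and~\ref{item2:convergence-in-specB}), and the Gelfand--Naimark Theorem. Your explicit remark that convergent nets determine a topology --- so that the tautological homeomorphism for the pullback topology is the asserted homeomorphism for the concretely described one --- is precisely the right point of care, and nothing further is needed.
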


\pagebreak[3]\section{Computing the Weyl groupoid}
\label{sec:weyl-paper2}

Our standing assumptions for the remainder of this paper are the following:
\begin{enumerate}[label=\textup{(\alph*)}]
    \item\label{assumption:Sigma} $\gpd $ is a second countable, locally compact Hausdorff, {\'e}tale groupoid;
    \item\label{assumption:cocycle} $c$ is a
    normalized,
    continuous
    $\mbb{T}$-valued 2-cocycle on $\gpd $;
    \item\label{assumption:mcA} $\agpd \sset \Iso(\gpd )$ is an Abelian subgroupoid, containing $\gpd\z$, on which $c$ is symmetric;
    \item\label{assumption:mcA-clopen-normal} $\agpd $ is clopen and normal in $\gpd $; and
    \item\label{assumption:mcA-maximal} $\agpd $ is chosen such that $B:=C^{*}_r(\agpd , c)$ is maximal Abelian in $A:=	C^{*}_r(\gpd , c)$.
\end{enumerate}
Note that Assumptions~\ref{assumption:Sigma}, \ref{assumption:cocycle}, and~\ref{assumption:mcA} make sure that $(\agpd ,c)$ falls into the scope of Section~\ref{sec:abelian-gp-bdl}; in particular, $B$ is a commutative algebra, and 
$\specB $ 
is
its spectrum, which comes with the map $\rho\colon \specB \to\agpd\z=\gpd\z$. As $\agpd $ is open in $\gpd $, $B$ is naturally a subalgebra of $A$, and normality of $\agpd $ implies that $B$ is regular in $A$. Furthermore, $B$ contains an approximate unit for $A$ because $\gpd\z\sset \agpd $. As $\agpd $ is also closed, the map $C_c (\agpd ,c) \to C_c (\gpd ,c)$ which extends functions by zero, extends to a faithful conditional expectation $\Phi\colon B\to A$; see \cite[Proposition~3.13]{DGNRW:Cartan}. Thus, Assumptions~\ref{assumption:Sigma}--\ref{assumption:mcA-maximal} make $B$ a Cartan subalgebra of $A$.

Let us explain why our last assumption on $\agpd $ is reasonable. It was shown in \cite[Theorem~3.1]{DGNRW:Cartan} that, in order to get \ref{assumption:mcA-maximal}, a sufficient assumption on $\agpd $ is that $(1)$ $\agpd $ is maximal among the Abelian subgroupoids of $\Iso(\gpd ) $ on which $c$ is symmetric, and additionally $(2)$ $\agpd $ is \emph{immediately centralizing} (see \cite[Definition~2]{DGNRW:Cartan}). A careful examination of the proof of \cite[Proposition 3.9]{DGNRW:Cartan} reveals that, instead of $(2)$, we may assume that for each $\eta\in \Iso(\gpd )$ with $u=r(\eta)=s(\eta)$, the set $\{ a \nu a\inv \, | \, a\in \agpd_{u}\}$ is either the singleton $\{\nu\}$ or infinite.

In the sections about to come, we will use the techniques we have developed so far to compute the Weyl groupoid $\mc{G}_{(A,B)}$ and the Weyl twist $\Sigma_{(A,B)}$ of the Cartan pair $(A,B)$. In particular, we will see in Theorems~\ref{thm:varphi} and~\ref{thm:psi} that there is a strong connection to a certain groupoid action of $\gpd  / \agpd $ on $\specB $. As such, it seems prudent to briefly state a few facts about the quotient groupoid $\gpd/\agpd.$
\begin{rmk}\label{rmk:mcGs-topol-properties}
    We let $q\colon \gpd  \to \gpd /\agpd =: \qgpd , \gamma \mapsto q(\gamma)=:\dot{\gamma},$ denote the quotient map. 
    Since $\agpd $ is a {\em wide} subgroupoid of $\gpd $ (i.e.\ $\agpd \sset\Iso(\gpd )$ is closed with $\agpd\z=\gpd\z$), openness of $\agpd $ and {\'e}taleness of $\gpd $ imply that $q$ is an open map. Since $\gpd $ is Hausdorff
    and $\agpd$ is closed in $\gpd$,
    this implies that $\qgpd $ is Hausdorff also. Furthermore, it follows from \cite[Corollary 2.13]{williams-groupoids} (taking $G = \agpd$ and $X = \gpd$) that $\qgpd $ is locally compact and second countable because $\gpd $ is.
    
    Lastly we point out that, if one is interested in groupoids $\agpd \sset\gpd $ with {\'e}tale quotient $\qgpd $ (as in \cite[Theorem 5.6]{IKRSW:2020}), then one must ask for $\agpd $ to be open in $\gpd ,$ as we have done.
\end{rmk}

We will now construct a continuous left action  $\tilde{\alpha}$ of the locally compact Hausdorff groupoid $\qgpd =\gpd /\agpd $ 
on the spectrum $\specB $ of $B=C_{r}^{*}(\agpd ,c)$, with the  moment map $\rho:\specB  \to \qgpd\z$ given by  $\rho|_{ \specB_{u}} = u $. In the following, we will write
\begin{equation*}
\qgpd \ast_{\rho} \specB  := \{ (\dot{\gamma}, \chi )\in \qgpd \times \specB \,|\, s_{\qgpd } (\dot{\gamma}) = \rho  (\chi )\}.
\end{equation*}

\begin{prop}\label{prop:tilde-alpha-is-action}
    Let $\rho\colon \specB \to\qgpd\z$ be given by $\rho|_{ \specB_{u}} = u. $  For $\gamma\in \gpd$, $(\dot{\gamma},\chi) \in \qgpd \ast_{\rho} \specB$, and $a \in \agpd_{r(\gamma)}$, define 
    \[ 	
    	\tilde{\alpha}_{\dot{\gamma}}(\chi) (a)
    	:=
    	\overline{c(\gamma, \gamma\inv)} \, c(\gamma\inv, a) \, c(\gamma\inv a, \gamma) \ \chi (\gamma\inv a \gamma)
    	.
    \]
    Then
 \begin{enumerate}[label= \textup{(\arabic*)}]
    \item\label{item:1} $\tilde{\alpha}_{\dot{\gamma}}(\chi)$ only depends on $\dot{\gamma}\in\qgpd = \gpd / \agpd$, not on $\gamma\in\gpd .$
    \item\label{item:2} $\tilde{\alpha}_{\dot{\gamma}}(\chi) \in \specB_{r(\gamma)} $ and $\tilde{\alpha}_{\dot{ \tau}\dot{\gamma}}(\chi) = \tilde{\alpha}_{\dot{ \tau}}\bigl(\tilde{\alpha}_{\dot{\gamma}}(\chi)\bigr)$.
    \item\label{item:3}  If $u\in\qgpd\z$, then $\tilde{\alpha}_{u}(\chi) = \chi$ for all $\chi\in \specB_{u}$.
    \item\label{item:continuous} The map     $\qgpd  \ast_{\rho} \specB  \to \specB $, $(\dot{\gamma}, \chi) \mapsto \tilde{\alpha}_{\dot{\gamma}} (\chi)$, is continuous.
\end{enumerate}
In other words, $\tilde{\alpha}$ is a continuous left action of~$\qgpd $ on~$\specB$ with moment map~$\rho$. 
\end{prop}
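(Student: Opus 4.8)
The plan is to verify the four claims in order, since they form a logical progression establishing that $\tilde\alpha$ is a well-defined continuous action. Throughout, the guiding principle is that $\tilde\alpha_{\dot\gamma}(\chi)$ is defined by conjugating the argument $a \in \agpd_{r(\gamma)}$ into $\agpd_{s(\gamma)}$ via $\gamma^{-1} a \gamma$ (which makes sense because $\agpd$ is normal in $\gpd$), evaluating $\chi$, and then correcting by cocycle terms to account for the fact that $c$ is only a $2$-cocycle rather than a genuine homomorphism.

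For \ref{item:1}, I would take two representatives $\gamma, \gamma' \in \gpd$ of the same class $\dot\gamma$, so that $\gamma' = \gamma s$ for some $s \in \agpd_{s(\gamma)}$ (using that $\agpd$ is a wide subgroupoid and the cosets are of the form $\gamma\agpd_{s(\gamma)}$). The task is then to show the defining formula gives the same value on $\gamma$ and $\gamma s$. This is a direct computation: substitute $\gamma' = \gamma s$ into the formula, expand $(\gamma s)^{-1} a (\gamma s) = s^{-1}(\gamma^{-1} a \gamma) s$, and repeatedly apply the cocycle condition \eqref{eq:cocycle} together with the projective-representation identity \eqref{eq:chi-proj} for $\chi$ and the symmetry of $c$ on $\agpd$. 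The various cocycle correction terms involving $s$ should cancel against the terms arising from $\chi(s^{-1}(\gamma^{-1}a\gamma)s)$ being rewritten via \eqref{eq:chi-proj}. This is the step I expect to be the main obstacle: it is the most delicate bookkeeping, and getting the correction terms in the definition to conspire correctly is precisely why the formula has the particular shape it does.

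For \ref{item:2}, I would first check that $\tilde\alpha_{\dot\gamma}(\chi)$ lies in $\specB_{r(\gamma)}$, i.e.\ that it satisfies the projective identity \eqref{eq:chi-proj} with cocycle $c$. This means computing $\tilde\alpha_{\dot\gamma}(\chi)(a)\,\tilde\alpha_{\dot\gamma}(\chi)(a')$ for $a, a' \in \agpd_{r(\gamma)}$ and showing it equals $c(a,a')\,\tilde\alpha_{\dot\gamma}(\chi)(aa')$; again this reduces to the cocycle condition applied several times, using that $\chi$ itself is $c$-projective on $\agpd_{s(\gamma)}$ and that conjugation $a \mapsto \gamma^{-1}a\gamma$ is a group isomorphism $\agpd_{r(\gamma)} \to \agpd_{s(\gamma)}$. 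The composition identity $\tilde\alpha_{\dot\tau\dot\gamma}(\chi) = \tilde\alpha_{\dot\tau}(\tilde\alpha_{\dot\gamma}(\chi))$ is another cocycle manipulation: I would expand both sides using the definition, noting $(\tau\gamma)^{-1} a (\tau\gamma) = \gamma^{-1}(\tau^{-1} a \tau)\gamma$, and match the accumulated cocycle factors. Having already proved \ref{item:1} is convenient here, since it lets me choose convenient representatives $\tau, \gamma$ with $s(\tau) = r(\gamma)$.

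For \ref{item:3}, when $\dot\gamma = u \in \qgpd\z$ I would take the representative $\gamma = u$ itself; then $\gamma^{-1} a \gamma = a$, and the cocycle terms $c(u,u)$, $c(u,a)$, $c(a,u)$ all equal $1$ by normalization, so $\tilde\alpha_u(\chi)(a) = \chi(a)$ immediately. Finally, for \ref{item:continuous}, I would use the characterization of convergence in $\specB$ from Proposition~\ref{shortened-prop:mcB-cong-widehatB}. Given a net $(\dot\gamma_i, \chi_i) \to (\dot\gamma, \chi)$ in $\qgpd \ast_\rho \specB$, I must verify the two conditions for $\tilde\alpha_{\dot\gamma_i}(\chi_i) \to \tilde\alpha_{\dot\gamma}(\chi)$: condition \ref{item1:convergence-in-specB} on the base points is automatic since $\rho(\tilde\alpha_{\dot\gamma_i}(\chi_i)) = r(\gamma_i) \to r(\gamma)$, and for condition \ref{item2:convergence-in-specB} I take $a_i \to a$ with $p(a_i) = r(\gamma_i)$ and must show $\tilde\alpha_{\dot\gamma_i}(\chi_i)(a_i) \to \tilde\alpha_{\dot\gamma}(\chi)(a)$. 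This follows by plugging into the defining formula: continuity of $c$ handles the three cocycle factors, while the term $\chi_i(\gamma_i^{-1}a_i\gamma_i)$ converges to $\chi(\gamma^{-1}a\gamma)$ by applying condition \ref{item2:convergence-in-specB} for the convergence $\chi_i \to \chi$, once I note that lifting $\dot\gamma_i \to \dot\gamma$ to convergent representatives $\gamma_i \to \gamma$ is possible (using that $q$ is open, so local continuous sections exist) and that $\gamma_i^{-1} a_i \gamma_i \to \gamma^{-1} a \gamma$ in $\agpd$ by continuity of multiplication and inversion.
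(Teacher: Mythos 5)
Your proposal is correct and takes essentially the same route as the paper: the paper dismisses \ref{item:1}--\ref{item:3} as computations ``readily verified'' from precisely the ingredients you invoke (the cocycle identity, symmetry of $c$ on $\agpd$, normalization, and $c(\gamma,\gamma\inv)=c(\gamma\inv,\gamma)$), and it proves \ref{item:continuous} exactly as you do, by verifying the convergence criteria of Proposition~\ref{shortened-prop:mcB-cong-widehatB} for $\tilde{\alpha}_{\dot{\gamma}_i}(\chi_i)$ using continuity of $c$ and of multiplication and inversion. One caveat about your parenthetical justification for lifting $\dot{\gamma}_i\to\dot{\gamma}$ to representatives $\gamma_i\to\gamma$: openness of $q$ alone does \emph{not} yield local continuous sections in general; what does is that $q$ is here a local homeomorphism --- it is injective on open bisections, since $q(\gamma_1)=q(\gamma_2)$ forces $s(\gamma_1)=s(\gamma_2)$ because $\agpd\sset\Iso(\gpd)$ with $\agpd\z=\gpd\z$ --- a lifting step that the paper's own proof glosses over entirely.
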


Before embarking on the proof, we
point out that the formula for $\tilde{\alpha}$ is not surprising. Indeed, if $\chi$ were defined  on all of $\gpd $ and satisfied Equation~\eqref{eq:chi-proj} (and, by extension, Equation~\eqref{eq:chi-a-inv}),  then  we would have
\begin{align*}
    \tilde{\alpha}_{\dot{\gamma}}(\chi) (a)
    &=
    \overline{c(\gamma, \gamma\inv)} \, c(\gamma\inv, a) \,  \chi (\gamma\inv a)\,\chi( \gamma)
    \\
    &=
    \overline{c(\gamma, \gamma\inv)} \, \chi (\gamma\inv)\, \chi( a)\,\chi( \gamma)
    =
    \chi(a).
\end{align*}

\begin{proof}
 One readily verifies \ref{item:1}--\ref{item:3} using the cocycle identity (Equation~\eqref{eq:cocycle}), the fact that $c$ is symmetric on the Abelian subgroupoid $\agpd $, that $c$ is normalized, and that $c(\gamma\inv, \gamma)=c(\gamma, \gamma\inv)$ for any $\gamma\in\gpd $ by \cite[Lemma 2.1]{DGNRW:Cartan}. 
 
 For \ref{item:continuous}, suppose that $(\dot{\gamma}_{i},\chi_{i}) \to (\dot{\gamma},\chi)$ in $\qgpd  \ast \specB $. We need to show 
    (see \cite[Proposition 3.3]{MRW96})
    that, if $ a_{i} \to a$ in $\agpd $ and $s( a_{i} ) = r(\gamma_{i} )$ for all $i$, then $\tilde{\alpha}_{\dot{\gamma}_{i}}(\chi_{i})( a_{i} ) \to \tilde{\alpha}_{\dot{\gamma}}(\chi)(a).$  Since the cocycle $c$ and both multiplication and inversion on $\gpd $ are continuous, we have 
    \[ \overline{c(\gamma_{i} , \gamma_{i}\inv) } c(\gamma\inv_{i},  a_{i}  ) c(\gamma_{i}\inv  a_{i}  , \gamma_{i} ) \to \overline{c(\gamma, \gamma\inv)} c(\gamma\inv, a ) c(\gamma\inv a , \gamma).\]
    Similarly, $\gamma_{i}\inv  a_{i} \gamma_{i} \to \gamma\inv a \gamma.$ Since the assumption $\chi_{i} \to \chi$ in $\specB $ implies in particular that $\chi_{i}(\gamma_{i}\inv  a_{i} \gamma_{i} ) \to \chi(\gamma\inv a\gamma)$, it follows that $\tilde{\alpha}_{\dot{\gamma_{i} }}(\chi_{i})( a_{i} ) \to \tilde{\alpha}_{\dot{\gamma}}(\chi)(a)$.
\end{proof}

 The action $\tilde{\alpha}$ of $\qgpd $ on $\specB $ allows us to endow the space $\qgpd \ast_{\rho} \specB $ with the structure of a topological groupoid. This so-called \emph{left action groupoid} is denoted $\qgpd  \ltimes \specB $, and we will show in Theorem~\ref{thm:varphi} that it is isomorphic to the Weyl groupoid $\mc{G}_{(A,B)}$.

Recall (see \cite[p.\ 3]{IK:gpd-actions-on-fractafolds}) that the elements $(\dot{\tau},\chi_{1})$, $(\dot{\gamma},\chi_{2}) \in \qgpd \ltimes \specB $ are composable if $\chi_{1}=\tilde{\alpha}_{\dot{\gamma}}( \chi_{2})$, and their product is given by
    \[
        (\dot{\tau},\chi_{1})(\dot{\gamma},\chi_{2}) = (\dot{\tau}\dot{\gamma},\chi_{2}).
    \]
    The inverse of an element $(\dot{\gamma},\chi)$ is $(\dot{\gamma}\inv,\tilde{\alpha}_{\dot{\gamma}}( \chi_{2}))$.  Therefore, 
    \[
        (\qgpd \ltimes \specB )\z = \left\{(\rho (\chi),\chi)|\chi\in\specB \right\},
    \]
    and so the unit space may be identified with $\specB $ using the projection onto the second factor, $(\rho (\chi),\chi) \mapsto \chi$. Note that $s(\dot{\gamma},\chi) = (s(\dot{\gamma}),\chi)$ and $r(\dot{\gamma},\chi) = (r(\dot{\gamma}),\tilde{\alpha}_{\dot{\gamma}}(\chi))$. The  topology of $\qgpd\ltimes\specB$ is inherited from $\qgpd \times \specB $; since $\qgpd $ and $\specB $ are locally compact, so is $\qgpd \ltimes \specB $, and the fact that $\qgpd$ is {\'e}tale and $\rho$ is continuous implies that $\qgpd \ltimes \specB$ is {\'e}tale.
    See \cite[p.\ 5]{mrw-equiv&isom} for more details.

Our next goal will be to  describe the relationship between the partial homeomorphisms $\alpha_n$
used to construct the Weyl groupoid $\mc{G}_{(A, B)}$ and the action $\tilde{\alpha}$ (see Proposition \ref{prop:from-alpha-to-tilde-alpha} below). We begin with a few preliminary results.

\begin{lemma}\label{lem:bisections-enough-for-Weyl-gpd}
    Every element of the Weyl groupoid associated to $(A, B)$ can be represented by some $(\alpha_n (x), n, x)$ where $n$ is an element of 
    \[
        N:=\left\{ f\in C_{c}(\gpd, c ) \,\vert\, \supp(f) \text{ is a bisection}\right\}.
    \]
\end{lemma}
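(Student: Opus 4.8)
The plan is to show that every normalizer can be replaced, up to the equivalence relation defining the Weyl groupoid, by one whose support is a bisection. The key structural fact available to me is the étale decomposition used earlier in Lemma~\ref{lem:phi-continuous}: since $\gpd$ is a second countable, locally compact Hausdorff, \'etale groupoid, $C_c(\gpd,c)$ is spanned by functions supported on bisections (cf.~\cite[Lemma 3.1.3]{AidansNotes}). So given an arbitrary $n \in N(B)$ with $x \in \dom(n)$, I would first aim to produce a bisection-supported normalizer $n'$ such that $[\alpha_{n'}(x), n', x] = [\alpha_n(x), n, x]$.

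First I would localize near $x$. Since $x \in \dom(n)$, the point $x(n^*n) > 0$, and by continuity there is an open neighborhood of $x$ on which $\Omega\inv(n^*n)$ is nonvanishing. The strategy is to cut $n$ down by an element of $B$ supported near $x$. Concretely, write $n = \sum_j f_j$ (or approximate $n$) as a combination of elements $f_j \in C_c(\gpd,c)$ each supported on a bisection; the subtlety is that $n$ lives in the completed $C^*$-algebra $A$, not a priori in $C_c(\gpd,c)$, so I would instead argue via the conditional expectation and the concrete picture of normalizers. The cleaner route is to observe that an element supported on a bisection is automatically a normalizer of $B = C_0(\specB)$, and then to use a Urysohn-type cut-down (in the spirit of Lemma~\ref{lem:cut-down}) so that only one ``bisection piece'' of $n$ survives near $x$.

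The concrete execution I would favor: choose $k \in C_0(\specB)$ with $k(x) = 1$ supported in a small neighborhood $U$ of $x$ inside $\dom(n)$, and set $n_k := n\,\Omega(k)$. Then $n_k \in N(B)$, and by Lemma~\ref{lem:cut-down} (applied appropriately, or its analogue for right multiplication) the partial homeomorphism $\alpha_{n_k}$ agrees with $\alpha_n$ on $\supp'(k) \ni x$, so $[\alpha_{n_k}(x), n_k, x] = [\alpha_n(x), n, x]$. By shrinking $U$ enough, I expect $n_k$ to be representable by a function on $\gpd$ whose support is a single bisection: the homeomorphism $\alpha_n$ is locally given by a bisection of $\gpd$, and cutting down by a small enough $k$ isolates exactly that bisection's contribution. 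I would then verify $n_k \in N$ directly from its support being a bisection.

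The main obstacle is the passage between the abstract normalizer $n \in A$ and a concrete compactly-supported function: normalizers need not lie in $C_c(\gpd,c)$, so I cannot simply decompose $n$ into bisection pieces and discard all but one. I would address this by working entirely through the action on $\widehat B \cong \specB$ and the defining relation~\eqref{eq:defining-eq-for-alpha}, using that $\alpha_n$ is a partial \emph{homeomorphism} (hence locally realized by a bisection of $\gpd$ via the \'etale structure) and that the equivalence relation $\sim$ only remembers the germ of $\alpha_n$ at $x$. Thus the geometric content—that the germ of any $\alpha_n$ at $x$ is the germ of some bisection—is what makes the replacement possible, and confirming that the cut-down $n\,\Omega(k)$ is genuinely supported on a bisection for small $U$ is the step I would treat most carefully.
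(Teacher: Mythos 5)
The step you yourself flagged as needing the most care is where the argument breaks, and it cannot be repaired in the form you propose. Right multiplication by $\Omega(k)\in B$ does preserve the germ at $x$ (your appeal to a right-handed variant of Lemma~\ref{lem:cut-down} is fine), but the claim that $n\,\Omega(k)$ is supported on a bisection once $\supp'(k)$ is small is false: localizing in $\widehat{B}$ \emph{delocalizes} along the fibres of $\agpd$. The fibres of $\widehat{B}\to\gpd\z$ are duals of the groups $\agpd_u$, so by an uncertainty-principle phenomenon a $k$ with small support in $\widehat{B}$ has $\Omega(k)$ smeared over infinitely many points of each fibre $\agpd_u$, and the same is true of $n\,\Omega(k)$ along $\agpd$-cosets. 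Concretely, take $\gpd=\mathbb{Z}^2$, $\agpd=\mathbb{Z}\times\{0\}$, $c=c_\theta$ with $\theta$ irrational, so that $A=A_\theta$, $B\cong C(\mathbb{T})$, and $N$ consists of scalar multiples of the canonical unitaries $u_g$ (bisections of a discrete group are singletons). Let $n=u_{(0,1)}f_1$ with $f_1\in C(\mathbb{T})$ supported in a small arc; this is a normalizer and $\dom(n)=\supp'(f_1)$. For any $k$ with $k(x)=1$ and small support, the cut-down is $n\,\Omega(k)=u_{(0,1)}g$ with $g:=f_1k\neq 0$, and writing $g=\sum_m\widehat{g}(m)\,u_{(m,0)}$, this element is supported on $\bigl\{(m,1)\,:\,\widehat{g}(m)\neq 0\bigr\}$, which is infinite: $g$ vanishes on a nonempty open set, hence is not a trigonometric polynomial. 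So the cut-down \emph{never} lies in $N$, and shrinking $\supp'(k)$ only makes the smearing worse; for general normalizers, which come with no visible coset decomposition, the situation is strictly harder. Your fallback claim --- that the germ of $\alpha_n$ at $x$ is the germ of a bisection because $\alpha_n$ is a partial homeomorphism and $\gpd$ is \'etale --- is circular: a partial homeomorphism of $\widehat{B}$ has no a priori relation to bisections of $\gpd$ (the only link is Equation~\eqref{eq:defining-eq-for-alpha}), and that claim \emph{is} the lemma to be proven.

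What is missing is a mechanism for passing from the completed-algebra element $n$ to an honest element of $C_c(\gpd,c)$, and cut-downs by elements of $B$ cannot supply it; approximation can. This is exactly how the paper argues: $\mathrm{span}(N)$ is dense in $A$ by \cite[Lemma 3.1.3]{AidansNotes} (the fact you cite but do not exploit), $N\subseteq N(B)$ by \cite[Lemma 3.11]{DGNRW:Cartan}, and then \cite[Proposition 4.1]{DGNRW:Cartan} performs the step at issue. The mechanism behind such a result, going back to Renault, is to approximate $n$ in norm by a finite sum $\sum_j f_j$ with $f_j\in N$ closely enough that $x\bigl(\Phi(n^{*}\sum_j f_j)\bigr)\neq 0$; then $x\bigl(\Phi(n^{*}f_j)\bigr)\neq 0$ for at least one $j$, and a germ-detection argument in the spirit of Proposition~\ref{prop:Weyl-gpd-similar-to-twist} and Lemma~\ref{lem:3-implies-2} upgrades this to $[\alpha_n(x),n,x]=[\alpha_{f_j}(x),f_j,x]$. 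In short, it is the conditional expectation applied to an approximant, not a cut-down of $n$ itself, that identifies the bisection carrying the germ of $n$ at $x$.
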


\begin{proof}
    Since $\gpd $ is a second-countable locally compact Hausdorff {\'e}tale groupoid, it follows from \cite[Lemma 3.1.3]{AidansNotes} that $
        C_c (\gpd , c) = \mathrm{span} (N).$
    In particular, $\mathrm{span} (N)$ is dense in $A$. By \cite[Lemma 3.11]{DGNRW:Cartan}, $N$ is contained in the normalizer of $C_{c}(\agpd ,c)$, which implies $N\sset N(B)$. The claim then follows from \cite[Proposition\ 4.1]{DGNRW:Cartan}.
\end{proof}

\begin{lemma}
\label{lem:3-implies-2}
    Suppose $f_{i}\in N$ and $\chi\in \specB_{u}$.
    Let $\Phi\colon A \to B$ denote the faithful conditional expectation associated to the Cartan pair.
    \begin{enumerate}[label=\textup{(\arabic*)}]
        \item\label{item:Weyl-gpd-implies-points} If $[\alpha_{f_{1}}(\phi_{\chi}), f_{1}, \phi_{\chi}]=[\alpha_{f_{2}}(\phi_{\chi}), f_{2}, \phi_{\chi}]$, then $\phi_{\chi}(\Phi (f_{2}^{*}f_{1}))\neq 0.$
        \item\label{item:Weyl-twist-implies-points} If $\llbracket\alpha_{f_{1}}(\phi_{\chi}), f_{1}, \phi_{\chi}\rrbracket=\llbracket\alpha_{f_{2}}(\phi_{\chi}), f_{2}, \phi_{\chi}\rrbracket$, then $\phi_{\chi}(\Phi (f_{2}^{*}f_{1}))> 0.$
    \end{enumerate}
    Moreover, if $\dot{\gamma}_{i}\in q(\supp'(f_i))$ with $s(\dot{\gamma}_1)=s(\dot{\gamma}_{2})=u$, then either of the above assumptions implies $\dot{\gamma}_1=\dot{\gamma}_2$.
\end{lemma}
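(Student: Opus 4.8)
The plan is to derive the claim directly from parts~\ref{item:Weyl-gpd-implies-points} and~\ref{item:Weyl-twist-implies-points}. Each of the two hypotheses yields $\phi_{\chi}(\Phi(f_{2}^{*}f_{1}))\neq 0$ (for~\ref{item:Weyl-twist-implies-points} this is immediate, since $>0$ implies $\neq 0$), so it suffices to prove the contrapositive of the implication ``$\phi_{\chi}(\Phi(f_{2}^{*}f_{1}))\neq 0 \Rightarrow \dot{\gamma}_{1}=\dot{\gamma}_{2}$''. I would therefore assume $\dot{\gamma}_{1}\neq\dot{\gamma}_{2}$ and show that the scalar $\phi_{\chi}(\Phi(f_{2}^{*}f_{1}))$ vanishes.

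First I would fix canonical representatives. Since $\dot{\gamma}_{i}\in q(\supp'(f_{i}))$ with $s(\dot{\gamma}_{i})=u$, and since $\supp'(f_{i})$ is a bisection on which $s$ restricts to a homeomorphism, there is a \emph{unique} element $\gamma_{i}\in\supp'(f_{i})$ with $s(\gamma_{i})=u$, and this $\gamma_{i}$ satisfies $q(\gamma_{i})=\dot{\gamma}_{i}$. Next, because $\Phi\colon A\to B$ is restriction of functions to $\agpd$ and $\phi_{\chi}$ sums over $\agpd_{u}$ (where $u=\rho(\chi)$), I would write
\[ \phi_{\chi}\bigl(\Phi(f_{2}^{*}f_{1})\bigr) = \sum_{a\in\agpd_{u}} \chi(a)\,(f_{2}^{*}f_{1})(a). \]
Expanding $(f_{2}^{*}f_{1})(a)$ via the twisted-convolution and involution formulas and substituting $\beta=\eta\inv$, a summand can be nonzero only when there is some $\beta\in\supp'(f_{2})$ with $s(\beta)=u$ and $\beta a\in\supp'(f_{1})$. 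Injectivity of $s$ on the bisection $\supp'(f_{2})$ forces $\beta=\gamma_{2}$, and then $\gamma_{2}a\in\supp'(f_{1})$ with $s(\gamma_{2}a)=s(a)=u$ forces $\gamma_{2}a=\gamma_{1}$ by injectivity of $s$ on $\supp'(f_{1})$.

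Thus the only index that can contribute to the sum is one for which $\gamma_{2}a=\gamma_{1}$, i.e.\ $a=\gamma_{2}\inv\gamma_{1}$, and such an $a\in\agpd_{u}$ exists precisely when $\gamma_{2}\inv\gamma_{1}$ is defined and lies in $\agpd$. Since $q$ is a groupoid homomorphism, $q(\gamma_{2}\inv\gamma_{1})=\dot{\gamma}_{2}\inv\dot{\gamma}_{1}$, which is a unit of $\qgpd$ exactly when $\dot{\gamma}_{1}=\dot{\gamma}_{2}$; equivalently, $\gamma_{2}\inv\gamma_{1}\in\agpd$ iff $\dot{\gamma}_{1}=\dot{\gamma}_{2}$. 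Consequently, under the standing assumption $\dot{\gamma}_{1}\neq\dot{\gamma}_{2}$ no index survives, the sum is empty, and $\phi_{\chi}(\Phi(f_{2}^{*}f_{1}))=0$, which proves the contrapositive. I expect the only delicate points to be bookkeeping rather than conceptual: carefully tracking the range and source constraints through the convolution (in particular, that a surviving term also forces $r(\gamma_{1})=r(\gamma_{2})$, so that $\gamma_{2}\inv\gamma_{1}$ is genuinely composable, this case being automatically consistent since $r(\gamma_{1})\neq r(\gamma_{2})$ already gives $\dot{\gamma}_{1}\neq\dot{\gamma}_{2}$), and repeatedly invoking that $s$ is injective on each bisection $\supp'(f_{i})$.
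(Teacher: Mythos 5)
Your proposal addresses only the final ``Moreover'' clause of the lemma and never proves items \ref{item:Weyl-gpd-implies-points} and \ref{item:Weyl-twist-implies-points} themselves. Those two implications are conclusions of the lemma, not hypotheses: the lemma asserts that equality in the Weyl groupoid (resp.\ Weyl twist) \emph{forces} $\phi_{\chi}(\Phi(f_{2}^{*}f_{1}))\neq 0$ (resp.\ $>0$), and this is precisely the nontrivial content. Your opening sentence -- ``Each of the two hypotheses yields $\phi_{\chi}(\Phi(f_{2}^{*}f_{1}))\neq 0$'' -- simply invokes the statements you are supposed to establish. What is missing is the unpacking of the two equivalence relations: for \ref{item:Weyl-twist-implies-points}, the definition of $\approx$ gives $b_{1},b_{2}\in B$ with $\phi_{\chi}(b_{i})>0$ and $f_{1}b_{1}=f_{2}b_{2}$; one then observes $f_{2}^{*}f_{1}b_{1}=f_{2}^{*}f_{2}b_{2}\in B$, evaluates using multiplicativity of $\phi_{\chi}$ and $\phi_{\chi}(f_{2}^{*}f_{2})>0$ (which holds since $\phi_{\chi}\in\dom(f_{2})$), and finally uses $B$-linearity of $\Phi$ to extract
\[
\phi_{\chi}\bigl(\Phi(f_{2}^{*}f_{1})\bigr)\,\phi_{\chi}(b_{1})
=\phi_{\chi}(f_{2}^{*}f_{1}b_{1})>0 .
\]
For \ref{item:Weyl-gpd-implies-points} the same argument runs after first producing such $b_{i}$ with $\phi_{\chi}(b_{i})\neq 0$ from equality in the Weyl groupoid, which is exactly what Proposition~\ref{prop:Weyl-gpd-similar-to-twist} provides; without citing that proposition (or reproving it), equality of germs gives you no algebraic identity of the form $f_{1}b_{1}=f_{2}b_{2}$ to work with.

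The part you did prove is correct and is essentially the paper's own argument for the ``Moreover'' clause, just phrased contrapositively: the paper notes that nonvanishing of $\sum_{a\in\agpd_{u}}\chi(a)\,(f_{2}^{*}f_{1})(a)$ forces $\supp'(f_{2}^{*}f_{1})\cap\agpd_{u}\neq\emptyset$, and since $\gamma_{2}\inv\gamma_{1}$ is the unique element of $\supp'(f_{2}^{*}f_{1})$ with source $u$, it must lie in $\agpd$, giving $\dot{\gamma}_{1}=\dot{\gamma}_{2}$; your bisection bookkeeping, including the composability caveat about $r(\gamma_{1})=r(\gamma_{2})$, matches this. But as submitted, the proof establishes roughly the last third of the lemma and leaves the two enumerated implications unproven.
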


\begin{proof}
    We start by proving~\ref{item:Weyl-twist-implies-points}. By assumption, $\phi_\chi\in \dom(f_{1})\cap\dom(f_{2})$ and there exist $b_{1},b_{2}\in B$ with $\phi_\chi (b_{i})   >  0$ and such that $f_{1} b_{1} = f_{2} b_{2}$. In particular, since $f_{2}\in N\sset N(B)$, it follows that $f_{2}^{*}f_{1} b_{1} = f_{2}^{*} f_{2} b_{2}$ is an element of $B$. Since $\phi_\chi\in\dom(f_{2})$, we conclude that
    \[
        \phi_{\chi}(f_{2}^{*}f_{1} b_{1})
        =
        \phi_{\chi}(f_{2}^{*}f_{2} b_{2})
        =
        \phi_{\chi}(f_{2}^{*} f_{2})\, \phi_{\chi}( b_{2})
          >  0.
    \]
    As the conditional expectation $\Phi$ fixes $B$ and is $B$-linear, we get the equality in the following:
    \[
        \phi_{\chi}\bigl(\Phi( f_{2}^{*}f_{1})\bigr)\,\phi_\chi (b_{1})
        =
        \phi_{\chi}(f_{2}^{*}f_{1} b_{1})
          >  0.
    \]
    It follows that $\phi_{\chi}\bigl(\Phi( f_{2}^{*}f_{1})\bigr)  >  0$, as claimed.

    For \ref{item:Weyl-gpd-implies-points}, we use Proposition~\ref{prop:Weyl-gpd-similar-to-twist} to obtain $b_i\in B$ such that $\phi_\chi (b_i)\neq 0$ and $f_{1}b_{1}=f_{2}b_{2}$. The above proof now works \textit{mutatis mutandis}, replacing each instance of `$>0$' by `$\neq 0$'.

    Lastly, in either of the two cases, $f_{2}^{*}f_{1}$ is an element of $N\sset C_{c}(\gpd ,c)$. 
    As $\Phi(g) = g|_{\agpd}$ for $g \in C_c(\gpd, c)$ (cf.~\cite[Proposition 3.13]{DGNRW:Cartan}),
    $\Phi (f_{2}^{*}f_{1})\in C_{c}(\agpd ,c)$. Thus the  definition of $\phi_\chi$ yields
    \[
        0 \neq  \phi_{\chi}\bigl(\Phi( f_{2}^{*}f_{1})\bigr)
        =
        \sum_{a\in\agpd_{\rho(\chi)}}
        \chi(a)\,
        \Phi( f_{2}^{*}f_{1}) (a)
        = \sum_{a \in \agpd_{\rho(\chi)}} \chi(a) f_2^{*}f_1(a)
        ,
    \]
    and consequently
    \[
        \supp'(f_{2}^{*}f_{1}) \cap \agpd_{\rho(\chi)} \neq \emptyset.
    \]
   If $\dot \gamma_i \in q(\supp'(f_i))$ satisfy $s(\dot \gamma_1) = s(\dot \gamma_2) = u$, let
   $\gamma_{i}$ denote the representative of $\dot{\gamma}_i$ in $\supp' (f_{i})$. Since $\gamma_{2}\inv \gamma_{1}$ is then the unique element in $\supp'(f_{2}^{*}f_{1})$ with source $\rho(\chi)$, it follows that $\gamma_{2}\inv\gamma_{1}\in\agpd_{\rho(\chi)}$, i.e.\ $\dot{\gamma}_{1}=\dot{\gamma}_{2}$.
\end{proof}

\begin{prop}
\label{prop:from-alpha-to-tilde-alpha}
    Suppose $f\in N$ and $\chi\in \specB $.
    \begin{enumerate}[label=\textup{(\arabic*)}]
        \item\label{item:dom-f} $\rho(\chi) \in s(\supp'(f))$ if and only if $\phi_{\chi}\in\dom(f)$.
        \item\label{item:from-alpha-to-tilde-alpha} If $ \{\gamma\} = \supp'(f)\cap \gpd_{\rho(\chi)},$ so that $\chi\in\dom(\tilde{\alpha}_{\dot{\gamma}})$ and $\phi_\chi \in \dom(f)$, then we have
    $ \alpha_{f}(\phi_\chi) = \phi_{\tilde \alpha_{\dot \gamma}(\chi)}.$
    \end{enumerate}
\end{prop}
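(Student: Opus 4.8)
The plan is to prove the two parts separately but using the same computational tool, namely the defining equation~\eqref{eq:defining-eq-for-alpha} for $\alpha_f$ together with the explicit formula for $\phi_\chi$ and the formula for $\tilde\alpha$. For part~\ref{item:dom-f}, recall that $\phi_\chi \in \dom(f)$ means $\phi_\chi(f^*f) > 0$. Since $f \in N$ has support a bisection, $f^*f$ is supported in $\agpd$ (it lies in $C_c(\agpd,c)$ by the conditional-expectation computation), so by the definition of $\phi_\chi$ I can write $\phi_\chi(f^*f) = \sum_{a \in \agpd_{\rho(\chi)}} \chi(a)\, f^*f(a)$. The key point is that because $\supp(f)$ is a bisection, $f^*f$ is supported on the image $s(\supp'(f))$ inside $\gpd\z \subseteq \agpd$, and in fact $f^*f$ evaluated at the unit $\rho(\chi)$ equals $\overline{f(\gamma)}f(\gamma)\,c(\gamma,\gamma\inv)\cdots = |f(\gamma)|^2$ (up to the normalized cocycle factors, which are trivial at the unit) where $\gamma$ is the unique element of $\supp'(f)$ with source $\rho(\chi)$, if such a $\gamma$ exists. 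So $\phi_\chi(f^*f) = \chi(\rho(\chi))\,|f(\gamma)|^2 = |f(\gamma)|^2 > 0$ precisely when $\rho(\chi) \in s(\supp'(f))$; otherwise $f^*f(\rho(\chi)) = 0$ and $\phi_\chi(f^*f) = 0$. This gives the biconditional.

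For part~\ref{item:from-alpha-to-tilde-alpha}, the strategy is to verify that $\alpha_f(\phi_\chi)$ and $\phi_{\tilde\alpha_{\dot\gamma}(\chi)}$ agree as characters of $B$, and since $\phi$ is a bijection (Lemma~\ref{lem:phionetoone}) it suffices to check they agree on elements of the form $\Omega(g)$ for $g \in C_0(\widehat B)$, or more concretely on $\phi_{\chi'}$-generators. The cleanest route is: for a test function $h \in C_c(\agpd,c)$ supported in a bisection with $h(a) \neq 0$ for the relevant $a \in \agpd_{r(\gamma)}$, evaluate both sides. On one side, the defining equation~\eqref{eq:defining-eq-for-alpha} gives $f^* \Omega(g) f = \Omega(g \circ \alpha_f)\, f^* f$, which I evaluate at $\phi_\chi$ to extract $g(\alpha_f(\phi_\chi))$. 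On the other side, I compute $\alpha_f(\phi_\chi)(h)$ directly: the left-hand side $\phi_\chi(f^* \Omega(h) f)$ expands, using the definition of $\phi_\chi$ and the twisted-convolution formula for $f^* (\cdot) f$, into a sum over $\agpd_{\rho(\chi)}$, and the only surviving term corresponds to conjugation $a \mapsto \gamma\inv a \gamma$ (using that $\gamma$ is the unique support element over $\rho(\chi)$). Tracking the cocycle factors that arise from the two multiplications $f^* \cdot (\Omega(h)f)$ and from $f^*f$ should reproduce exactly the factors $\overline{c(\gamma,\gamma\inv)}\,c(\gamma\inv,a)\,c(\gamma\inv a,\gamma)$ appearing in the definition of $\tilde\alpha_{\dot\gamma}(\chi)$.

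Concretely, the equation $\alpha_f(\phi_\chi)(b)\cdot \phi_\chi(f^*f) = \phi_\chi(f^* b f)$ (which is the defining relation for $\alpha_f$ at the functional level, as in Remark~\ref{rmk:dom-ran-argument}) becomes, after dividing by $\phi_\chi(f^*f) = |f(\gamma)|^2 > 0$, a formula for $\alpha_f(\phi_\chi)$ evaluated on $b \in B$. Taking $b = \Omega(h)$ and expanding $f^* b f$ via twisted convolution, I isolate the coefficient at each $a \in \agpd_{r(\gamma)}$; the normalized cocycle bookkeeping then shows that $\alpha_f(\phi_\chi)$, viewed as a character via $\phi$, is $\phi_{\tilde\alpha_{\dot\gamma}(\chi)}$. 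The main obstacle I anticipate is purely bookkeeping: correctly assembling the several cocycle factors coming from $f^*$ (which carries a $\overline{c(\gamma,\gamma\inv)}$ from the involution), from the two convolution products, and from dividing by $f^*f$, and confirming they collapse to exactly $\overline{c(\gamma,\gamma\inv)}\,c(\gamma\inv,a)\,c(\gamma\inv a,\gamma)$. The cocycle identity~\eqref{eq:cocycle}, symmetry of $c$ on $\agpd$, normalization, and the identity $c(\gamma\inv,\gamma)=c(\gamma,\gamma\inv)$ from \cite[Lemma 2.1]{DGNRW:Cartan} are the tools that make these factors combine correctly.
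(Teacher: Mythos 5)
Your proposal is correct and follows essentially the same route as the paper: for part (1), both compute $\phi_\chi(f^*f)$ using that $f^*f$ is supported in $\gpd\z$ with value $|f(\gamma)|^2$ at $\rho(\chi)$ when $\gamma$ exists; for part (2), both use the defining relation $\phi_\chi(f^*bf) = \alpha_f(\phi_\chi)(b)\,\phi_\chi(f^*f)$, expand $f^*bf$ by twisted convolution, exploit the bisection property to isolate the single surviving term $b(\gamma\eta\gamma\inv)$, and match the resulting cocycle factors to the definition of $\tilde\alpha_{\dot\gamma}$ (via the cocycle identity and $c(\gamma,\gamma\inv)=c(\gamma\inv,\gamma)$), concluding by density of $C_c(\agpd,c)$ in $B$. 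The cocycle bookkeeping you defer is exactly the computation the paper carries out, and it does collapse to the asserted factors.
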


\begin{proof}
    Since $f$ is supported in a bisection, it follows from \cite[Lemma 3.11]{DGNRW:Cartan} that $f$ is a normalizer of $B$, so $\alpha_{f}$ exists and has domain $\dom(f) \sset \widehat{B}$.
It then follows from (a twisted variant of) \cite[Lemma 3.1.4]{AidansNotes} that
\begin{equation*}\label{eq:f*f-support} f^{*} f(\eta) = \begin{cases}
|f(\zeta)|^2, & \eta \in \gpd\z \text{ and } \supp'( f)\cap \gpd_{\eta} = \{\zeta\}\\
0, &
 \text{otherwise}.
\end{cases} \end{equation*}

Using that $\chi(\rho(\chi))=1$ and that $f^{*}f\in C_{c}(\agpd ,c)\sset B$, this implies
\begin{align}\label{eq:phi-chi-f*f}
\phi_{  \chi}(f^{*} f) &=\sum_{\eta \in \agpd_{\rho(\chi)}}   \chi(\eta)\, (f^{*}f)(\eta)
=
\begin{cases}
 |f(\gamma)|^2 & \text{ if } \supp'(f)\cap \gpd_{\rho(\chi)}   =   \{\gamma\},
 \\
 0 & \text{ if } \supp'(f)\cap \gpd_{\rho(\chi)}   =\emptyset.
 \end{cases}
\end{align}
This proves \ref{item:dom-f}.

 For \ref{item:from-alpha-to-tilde-alpha}, note first that $\rho(\chi)=s(\gamma)$, so $\chi\in \specB_{\rho(\chi)}$ is automatically an element of $\dom(\tilde{\alpha}_{\dot{\gamma}}) = \specB_{s(\gamma)}$. It remains to prove that $\alpha_{f}(\phi_\chi) = \phi_{\tilde \alpha_{\dot \gamma}(\chi)}.$ Recall that $\alpha_{f}$ is uniquely determined by satisfying
\begin{align*}
    x(f^{*} bf)& = \alpha_{f}(x)(b)\,  x(f^{*}f)
    \text{ for all } b \in C^{*}_r(\agpd , c) \text{ and }x \in \widehat{B}.
\end{align*}
Observe that for $b\in C_{c}(\agpd ,c)$ and $\eta \in \agpd $,
\begin{align*} 
f^{*} b f (\eta) &= \sum_{\xi \in\gpd^{s(\eta)}} \overline{f(\xi\inv \eta\inv) c( \eta \xi, \xi\inv\eta\inv)} b*f(\xi\inv) c(\eta \xi, \xi\inv) \\
&= \sum_{\xi\in\gpd^{s(\eta)}}
\sum_{\beta \in\gpd^{s(\xi\inv)}} \overline{f(\xi\inv \eta\inv) c(\eta \xi, \xi\inv\eta\inv)} b(\xi\inv \beta) f(\beta\inv)c(\eta \xi, \xi\inv )  c(\xi\inv\beta, \beta\inv)
 \\
&= \sum_{\xi, \beta\in\gpd^{s(\eta)}} \overline{f(\xi\inv \eta\inv) c( \eta \xi, \xi\inv\eta\inv)} b(\xi\inv \beta) f(\beta\inv)c(\eta \xi, \xi\inv )  c(\xi\inv\beta, \beta\inv).
\end{align*}

The factor $b(\xi\inv\beta)$ will only be nonzero if $r(\xi\inv \beta) = s(\xi\inv\beta)$, i.e.\ $r(\xi\inv) = s(\beta)$.  
In that case, since  $f$ is supported in a bisection and since $r(\beta\inv) = s(\beta)=r(\xi\inv)=r(\xi\inv\eta\inv)$, the only nonzero terms in the sum occur when $\beta = \eta \xi$,
so that 
\begin{align*} f^{*} b f(\eta) &= \sum_{\beta \in \gpd^{ r(\eta)}} |f(\beta\inv)|^2 b(\beta\inv \eta \beta) \overline{c(\beta, \beta\inv)} c(\beta, \beta\inv \eta) c(\beta\inv \eta \beta, \beta\inv)
\\
&=\sum_{\zeta \in \gpd_{r(\eta)}} |f(\zeta )|^2 b(\zeta  \eta\zeta\inv) \overline{c(\zeta\inv,\zeta)} c(\zeta\inv,\zeta  \eta) c(\zeta  \eta\zeta\inv,\zeta ) .
\end{align*}
In particular, if $\eta\in \agpd_{\rho(\chi)}^{\rho(\chi)}$ -- so that $r(\eta)=\rho(\chi)$ -- then the assumption $\supp'(f)\cap \gpd_{\rho(\chi)}
    =
    \{\gamma\}$ shows that
\begin{align*}
f^{*} b f(\eta) &= |f(\gamma )|^2 b(\gamma  \eta \gamma\inv) \overline{c( \gamma\inv, \gamma)} c(\gamma\inv, \gamma  \eta) c(\gamma  \eta \gamma\inv, \gamma ) .
\end{align*}

Therefore, 
 for $x=\phi_{\chi}$  and $b\in C_{c}(\agpd ,c)\sset B$,
\begin{align}\label{eq:phichi-f*bf}
     \phi_{  \chi}(f^{*} b f) =& \sum_{\eta \in \agpd_{\rho(\chi)}}   \chi(\eta)\, (f^{*} bf)(\eta)\notag
     \\
     =&
    |f(\gamma )|^2 
    \sum_{\eta \in \agpd_{\rho(\chi)}}   \chi(\eta)\, b(\gamma \eta \gamma\inv) \overline{c( \gamma\inv, \gamma)} c(\gamma\inv, \gamma  \eta) c(\gamma \eta \gamma\inv, \gamma).
\end{align}
By Equation~\eqref{eq:phi-chi-f*f} and by definition of $\alpha_{f}$, we have
\[
    \alpha_{f}(\phi_{  \chi})(b) \, \abs{f(\gamma)}^2
    =
    \alpha_{f}(\phi_{  \chi})(b)\,  \phi_{\chi}(f^{*}f)
    =
    \phi_{\chi}(f^{*} bf),
\]
and since $f(\gamma) \neq 0$ as $\gamma\in\supp' (f)$, Equation~\eqref{eq:phichi-f*bf} allows us to conclude that 
\begin{align*}
    \alpha_{f}(\phi_{  \chi})(b)
    &=
    \sum_{\eta \in \agpd_{\rho(\chi)}}   \chi(\eta)\, b(\gamma \eta \gamma\inv) \overline{c( \gamma\inv, \gamma)} c(\gamma\inv, \gamma  \eta) c(\gamma \eta \gamma\inv, \gamma)
    \\
    & = \sum_{a \in \agpd_{r(\gamma)}}
\chi (\gamma\inv a \gamma)\, \overline{c(\gamma, \gamma\inv}) c(\gamma\inv, a \gamma ) c(a, \gamma)b(a)
    \\
    &= \sum_{a \in \agpd_{r(\gamma)}} \chi (\gamma\inv a \gamma)\, \overline{c(\gamma, \gamma\inv)} c(\gamma\inv a, \gamma ) c(\gamma\inv, a)b(a) =  \phi_{\tilde \alpha_{\dot \gamma}}(\chi) (b).
\end{align*}
To obtain the second equality, we invoked the fact that $c(\gamma, \gamma\inv ) = c(\gamma\inv , \gamma)$ for any $\gamma \in \gpd$ (see \cite[Lemma 2.1]{DGNRW:Cartan}).
It follows that, as desired, $\alpha_{f}(\phi_\chi) (b) = \phi_{\tilde \alpha_{\dot \gamma}(\chi)} (b)$ for all $b$ in the dense subalgebra $C_{c}(\agpd )$ and thus on all of $B$.
\end{proof}

The fact that $
        \alpha_{f}(\phi_\chi) = \phi_{\tilde \alpha_{\dot \gamma}(\chi)}$ whenever $ \{\gamma\} = \supp'(f)\cap \gpd_{\rho(\chi)}$ for $f\in N$ and $\chi\in \specB$, shows that there is an intimate connection between the partial action $\alpha$ of $N(B)$ on $\specB$ and the action $\tilde{\alpha}$ of $\qgpd$ on $\specB$. In order to describe this connection, we first need to better understand equality in the Weyl groupoid $\mc{G}_{(A,B)}$.

\begin{prop}
\label{prop:Weyl-gpd-equality-equivalences}
    Suppose $f_{i}\in N$ and let $X^{i}:= \mathrm{supp}'(f_{i})\sset\gpd $ and $\chi\in \specB $. 
    Then the following are equivalent:
    \begin{enumerate}[label=\textup{(\arabic*)}]
        \item\label{item:same-in-unitspace} There exists an open neighborhood $U$ of $\rho(\chi)$ in $s(X^{1})\cap s(X^{2}) \sset \gpd\z$ such that $q(X^{1}_{u}) = q(X^{2}_{u})$ for all $u\in U$.
        \item\label{item:same-on-rho-chi} $q(X^{1}_{\rho(\chi)}) = q(X^{2}_{\rho(\chi)})$.
        \item\label{item:same-in-Weyl} $\phi_\chi\in\dom(f_1) \cap \dom(f_2)$ and $[\alpha_{f_{1}}(\phi_{\chi}), f_{1}, \phi_{\chi}]=[\alpha_{f_{2}}(\phi_{\chi}), f_{2}, \phi_{\chi}]$.
    \end{enumerate}
\end{prop}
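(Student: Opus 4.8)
The plan is to prove the cyclic chain $\ref{item:same-in-unitspace}\Rightarrow\ref{item:same-in-Weyl}\Rightarrow\ref{item:same-on-rho-chi}\Rightarrow\ref{item:same-in-unitspace}$. Throughout I would work under the understanding that $\rho(\chi)\in s(X^1)\cap s(X^2)$: if this membership fails then \ref{item:same-in-Weyl} fails by Proposition~\ref{prop:from-alpha-to-tilde-alpha}\ref{item:dom-f} and \ref{item:same-in-unitspace} fails by definition, so the statement is only meaningful once both sources contain $\rho(\chi)$. The one structural fact I would record first is that, since $\supp(f_i)$ is a bisection, $s|_{X^i}$ is a homeomorphism onto the open set $s(X^i)\subseteq\gpd\z$; writing $\sigma_i:=(s|_{X^i})\inv\colon s(X^i)\to X^i$ for the resulting continuous section, we have $X^i_u=\{\sigma_i(u)\}$ for every $u\in s(X^i)$.

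For $\ref{item:same-in-unitspace}\Rightarrow\ref{item:same-in-Weyl}$ I would run the section picture through Proposition~\ref{prop:from-alpha-to-tilde-alpha}. Fix the open set $U$ from \ref{item:same-in-unitspace}. For any $\chi'\in\specB$ with $\rho(\chi')\in U$, the unique elements $\gamma_i':=\sigma_i(\rho(\chi'))\in X^i_{\rho(\chi')}$ satisfy $\dot{\gamma}_1'=\dot{\gamma}_2'$ by hypothesis, so part~\ref{item:from-alpha-to-tilde-alpha} of that proposition gives
\[
\alpha_{f_1}(\phi_{\chi'})=\phi_{\tilde{\alpha}_{\dot{\gamma}_1'}(\chi')}=\phi_{\tilde{\alpha}_{\dot{\gamma}_2'}(\chi')}=\alpha_{f_2}(\phi_{\chi'}).
\]
Because $\rho$ is continuous and $\phi\colon\specB\to\widehat B$ is a homeomorphism (Corollary~\ref{cor:specB-is-spectrum}), the set $\phi(\rho\inv(U))$ is an open neighborhood of $\phi_\chi$ contained in $\dom(f_1)\cap\dom(f_2)$ (the containment uses Proposition~\ref{prop:from-alpha-to-tilde-alpha}\ref{item:dom-f}), and the display shows $\alpha_{f_1}$ and $\alpha_{f_2}$ agree there; this is exactly the equality of Weyl-groupoid classes in \ref{item:same-in-Weyl}.

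For $\ref{item:same-in-Weyl}\Rightarrow\ref{item:same-on-rho-chi}$ I would appeal directly to Lemma~\ref{lem:3-implies-2}: its part~\ref{item:Weyl-gpd-implies-points} converts the Weyl-groupoid equality into $\phi_\chi(\Phi(f_2^{*}f_1))\neq0$, and the concluding ``Moreover'' clause of that lemma, applied to the representatives $\gamma_i=\sigma_i(\rho(\chi))$ which share source $\rho(\chi)$, yields $\dot{\gamma}_1=\dot{\gamma}_2$, i.e.\ $q(X^1_{\rho(\chi)})=q(X^2_{\rho(\chi)})$.

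The remaining implication $\ref{item:same-on-rho-chi}\Rightarrow\ref{item:same-in-unitspace}$ is where I expect the genuine work to lie, as it is the only step that promotes a one-point coincidence to a coincidence on a neighborhood. The plan is to exploit that $\qgpd=\gpd/\agpd$ is {\'e}tale (Remark~\ref{rmk:mcGs-topol-properties}). Since $\agpd$ is wide, $\qgpd\z$ is identified with $\gpd\z$ and $s_{\qgpd}\circ q=s$, so the maps $g_i:=q\circ\sigma_i\colon s(X^i)\to\qgpd$ are continuous local sections of the source map $s_{\qgpd}$, and \ref{item:same-on-rho-chi} says exactly $g_1(\rho(\chi))=g_2(\rho(\chi))$. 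I would then pick an open bisection $W$ of $\qgpd$ containing this common value; by continuity there is an open $U\ni\rho(\chi)$ in $s(X^1)\cap s(X^2)$ with $g_i(U)\subseteq W$, and since $s_{\qgpd}$ is injective on $W$ while $s_{\qgpd}(g_i(u))=u$, the two sections must agree on $U$, giving $q(X^1_u)=q(X^2_u)$ for all $u\in U$. The main obstacle is thus recognizing that {\'e}taleness of $\qgpd$ supplies precisely the local rigidity required; once the sections $g_i$ are in place, the bisection argument is routine.
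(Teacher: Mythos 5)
Your proposal is correct, and two of its three implications are argued exactly as in the paper: \ref{item:same-in-unitspace}$\implies$\ref{item:same-in-Weyl} via the sections $\sigma_i$, Proposition~\ref{prop:from-alpha-to-tilde-alpha}\ref{item:from-alpha-to-tilde-alpha}, and the open set $\phi(\rho\inv(U))$; and \ref{item:same-in-Weyl}$\implies$\ref{item:same-on-rho-chi} via Lemma~\ref{lem:3-implies-2}\ref{item:Weyl-gpd-implies-points} together with its ``Moreover'' clause. The genuine difference is in \ref{item:same-on-rho-chi}$\implies$\ref{item:same-in-unitspace}. The paper stays inside $\gpd$: it forms the open bisection $(X^{1})\inv\cdot X^{2}$ (citing \cite[Proposition 2.2.4]{Paterson:book}), intersects it with the open set $\agpd$, and takes $U:=s\bigl([(X^{1})\inv\cdot X^{2}]\cap\agpd\bigr)$; unwinding definitions, this $U$ is exactly your agreement locus $\{u\in s(X^{1})\cap s(X^{2})\,\vert\, g_{1}(u)=g_{2}(u)\}$, shown open directly from openness of $\agpd$ and {\'e}taleness of $\gpd$. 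You instead pass to the quotient and use that two continuous sections of the {\'e}tale map $s_{\qgpd}$ which agree at a point agree near it. Both arguments are sound, and both ultimately spend the same hypothesis (openness of $\agpd$), but the paper's version is self-contained, whereas yours leans on {\'e}taleness of $\qgpd$ --- a fact which Remark~\ref{rmk:mcGs-topol-properties} does \emph{not} state (it records Hausdorffness, local compactness, second countability, and openness of $q$), and which the paper asserts without proof only later, in the paragraph of Section~\ref{sec:weyl-paper2} introducing $\qgpd\ltimes\specB$. The fact is true and routine to verify (push an open bisection of $\gpd$ through the open map $q$), but to be airtight you should either prove it or cite that later passage rather than the Remark. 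Finally, your opening normalization is the same convention the paper adopts when it calls $X^{i}_{u}$ ``the singleton-set''; note that it is genuinely needed, since under a literal reading \ref{item:same-on-rho-chi} would hold vacuously when both sets $X^{i}_{\rho(\chi)}$ are empty, while \ref{item:same-in-unitspace} and \ref{item:same-in-Weyl} fail.
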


In the above, we wrote $X^{i}_{u}$ for the singleton-set $X^{i}\cap \gpd_{u}$, and $q\colon \gpd \to \qgpd $ for the quotient map.
\begin{proof}
Note that  \ref{item:same-in-Weyl}$\implies$\ref{item:same-on-rho-chi} is the second assertion of Lemma~\ref{lem:3-implies-2}\ref{item:Weyl-gpd-implies-points}, so it suffices to prove \ref{item:same-on-rho-chi}$\implies$\ref{item:same-in-unitspace}$\implies$\ref{item:same-in-Weyl}. 

   Assume that \ref{item:same-on-rho-chi} holds. By
   \cite[Proposition 2.2.4]{Paterson:book},
   $(X^{1})\inv\cdot X^{2}$ is an    open   bisection in $\gpd $.    Setting   $\{\gamma_{i} \}=X^{i}_{\rho(\chi)}$,    by assumption $\dot{\gamma}_{1} = \dot{\gamma}_{2}$; in particular, $\gamma_{1}\inv \gamma_{2} \in [(X^{1})\inv\cdot X^{2} ] \cap \agpd $. Since $\agpd $ is open and $\gpd $ is {\'e}tale, the set $U:=s([(X^{1})\inv\cdot X^{2} ] \cap \agpd )$ is  
   an open subset of $\gpd\z$ which contains $s(\gamma_{1}\inv\gamma_{2}) = s(\gamma_{2})=\rho(\chi)$. Furthermore, $U$ is contained in $s(X^{1})\cap s(X^{2})$: Any $u\in U$ can be written as $u=s(\gamma_{u}\inv  \tau_{u})$ for $\gamma_{u}\in X^{1}$ and $ \tau_{u} \in X^{2}$ such that $\gamma_{u}\inv  \tau_{u}\in\agpd $. In particular,
    \begin{alignat*}{2}
        s( \tau_{u}) &= s(\gamma_{u}\inv  \tau_{u}) = u &&\text{by assumption, and}
        \\
        s(\gamma_{u})&
        = r(\gamma_{u}\inv  \tau_{u})
        \overset{(*)}{=}s(\gamma_{u}\inv  \tau_{u})=u,\quad&&\text{where $(*)$ follows from }\agpd  \sset \Iso(\gpd ).
    \end{alignat*}
    This shows that $\{\gamma_{u}\} = X^1_{u}$, $\{ \tau_{u}\} = X^2_{u}$, and $U\sset s(X^{1})\cap s(X^{2})$. Moreover, it follows from  $\gamma_{u}\inv \tau_{u}\in\agpd $ that $q(X^1_{u})=q(X^2_{u})$. This proves~\ref{item:same-in-unitspace}.
    
    Next, we will show \ref{item:same-in-unitspace}$\implies$\ref{item:same-in-Weyl}.
    Pick any $u\in U \sset s(X^{1})\cap s(X^{2})$, and let $
    \gamma^{i}_u 
    $
    denote the unique element in $X^{i}_{u} = s\inv (u) \cap X^{i}$. Note that, by our assumption on $U$, we have $
    \dot{\gamma}^{1}_u=\dot{\gamma}^2_{u}
    $. Using Proposition~\ref{prop:from-alpha-to-tilde-alpha}\ref{item:from-alpha-to-tilde-alpha} for $(*)$ in the following, we thus see that,  for any $\nu\in\specB_{u}$,
    \[
        \alpha_{f_{1}} (\phi_{\nu})
        \overset{(*)}{=}
        \phi_{\tilde{\alpha}_{\dot{\gamma}^{1}_u}(\nu)}
        =
        \phi_{\tilde{\alpha}_{\dot{\gamma}^{2}_u}(\nu)}
        \overset{(*)}{=}
        \alpha_{f_{2}} (\phi_{\nu})
        .
    \]
    Since $u$ was arbitrary, this shows that $\alpha_{f_{1}}$ and $\alpha_{f_{2}}$ coincide on all of $\phi (\rho\inv (U)).$ This set contains $\phi_\chi$ by the assumption that $\rho(\chi)\in U$ and it is open in $\widehat{B}$ since $\phi$ is a homeomorphism, $\rho$ is continuous, and $U$ is open in $\gpd\z$. This proves \ref{item:same-in-Weyl}.
\end{proof}

\begin{thm}\label{thm:varphi}
There is an isomorphism $\varphi$ of topological groupoids $\qgpd  \ltimes \specB  \to \mc{G}_{(A,B)}$ given by
\begin{equation} 
\label{eq:defn-of-varphi} \varphi(\dot{\gamma}, \chi) := [\phi_{\tilde{\alpha}_{\dot{\gamma}}(\chi)},  f , \phi_\chi] =[\alpha_{f}(\phi_\chi),  f , \phi_\chi], 
\end{equation}
where $f\in C_{c}(\gpd ,c)$ is any function supported on a bisection such that $\dot{\gamma}\in q(\supp'(f))$.
\end{thm}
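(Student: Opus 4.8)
The plan is to establish, in order, that $\varphi$ is well defined, that it is an isomorphism of abstract groupoids, and finally that it is a homeomorphism; since the substantive content sits in the preceding propositions, the argument is largely one of assembly.

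\emph{Well-definedness.} First I would observe that for any $(\dot\gamma,\chi)\in\qgpd\ast_\rho\specB$ a function $f\in N$ with $\dot\gamma\in q(\supp'(f))$ exists: choosing a representative $\gamma$ of $\dot\gamma$, an open bisection around it, and applying Urysohn's Lemma produces $f\in C_c(\gpd,c)$ supported there with $f(\gamma)\neq0$. As $s(\dot\gamma)=\rho(\chi)$, the point $\gamma$ lies in $\supp'(f)\cap\gpd_{\rho(\chi)}$, so $\phi_\chi\in\dom(f)$ by Proposition~\ref{prop:from-alpha-to-tilde-alpha}\ref{item:dom-f}, and the two expressions in~\eqref{eq:defn-of-varphi} coincide by Proposition~\ref{prop:from-alpha-to-tilde-alpha}\ref{item:from-alpha-to-tilde-alpha}. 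If $f_1,f_2\in N$ both satisfy $\dot\gamma\in q(\supp'(f_i))$, then with $X^i:=\supp'(f_i)$ one has $q((X^1)_{\rho(\chi)})=\{\dot\gamma\}=q((X^2)_{\rho(\chi)})$, which is condition~\ref{item:same-on-rho-chi} of Proposition~\ref{prop:Weyl-gpd-equality-equivalences}; its equivalence with condition~\ref{item:same-in-Weyl} gives $[\alpha_{f_1}(\phi_\chi),f_1,\phi_\chi]=[\alpha_{f_2}(\phi_\chi),f_2,\phi_\chi]$, so the value of $\varphi$ is independent of $f$.

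\emph{Groupoid isomorphism.} To see that $\varphi$ respects multiplication, I would take composable elements $(\dot\tau,\chi_1),(\dot\gamma,\chi_2)$, so that $\chi_1=\tilde\alpha_{\dot\gamma}(\chi_2)$, and choose $h,g\in N$ with $\dot\tau\in q(\supp'(h))$ and $\dot\gamma\in q(\supp'(g))$. The product $hg$ is again supported on a bisection, and tracking the unique factorization in $\supp'(h)\,\supp'(g)$ shows $\dot\tau\dot\gamma\in q(\supp'(hg))$. Using $\phi_{\chi_1}=\alpha_g(\phi_{\chi_2})$ from Proposition~\ref{prop:from-alpha-to-tilde-alpha}\ref{item:from-alpha-to-tilde-alpha} and the multiplication rule of $\mc{G}_{(A,B)}$, one computes
\[ \varphi(\dot\tau,\chi_1)\,\varphi(\dot\gamma,\chi_2)=[\alpha_h(\alpha_g(\phi_{\chi_2})),h,\alpha_g(\phi_{\chi_2})]\cdot[\alpha_g(\phi_{\chi_2}),g,\phi_{\chi_2}]=[\alpha_{hg}(\phi_{\chi_2}),hg,\phi_{\chi_2}], \]
which equals $\varphi(\dot\tau\dot\gamma,\chi_2)$ by well-definedness. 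Preservation of units follows by representing a unit $(\rho(\chi),\chi)$ via an $f$ supported on a bisection inside $\agpd$: then $q(\supp'(f))\subseteq\qgpd\z$, so $\tilde\alpha$ acts trivially and Proposition~\ref{prop:from-alpha-to-tilde-alpha}\ref{item:from-alpha-to-tilde-alpha} together with Proposition~\ref{prop:tilde-alpha-is-action}\ref{item:3} makes $\alpha_f$ the identity near $\phi_\chi$, whence $[\phi_\chi,f,\phi_\chi]$ is a unit of $\mc{G}_{(A,B)}$; preservation of inverses is then automatic in a groupoid. For injectivity, an equality $\varphi(\dot\tau,\chi_1)=\varphi(\dot\gamma,\chi_2)$ forces $\phi_{\chi_1}=\phi_{\chi_2}$, hence $\chi_1=\chi_2$ by Lemma~\ref{lem:phionetoone}, after which Proposition~\ref{prop:Weyl-gpd-equality-equivalences} (\ref{item:same-in-Weyl}$\Rightarrow$\ref{item:same-on-rho-chi}) gives $\dot\tau=\dot\gamma$. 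For surjectivity, Lemma~\ref{lem:bisections-enough-for-Weyl-gpd} represents any class as $[\alpha_n(x),n,x]$ with $n\in N$; writing $x=\phi_\chi$ and letting $\gamma$ be the unique point of $\supp'(n)\cap\gpd_{\rho(\chi)}$ (which exists by Proposition~\ref{prop:from-alpha-to-tilde-alpha}\ref{item:dom-f}), the pair $(q(\gamma),\chi)$ is sent to this class.

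\emph{Homeomorphism.} For $f\in N$ I would introduce the open set $W_f:=(\qgpd\ltimes\specB)\cap\bigl(q(\supp'(f))\times\rho\inv(s(\supp'(f)))\bigr)$, open because $q$ is open, $\rho$ continuous, and $\gpd$ \'etale. The key observation is that on $W_f$ the first coordinate is redundant: any $(\dot\eta,\nu)\in W_f$ satisfies $\dot\eta=q(\eta_0)$, where $\eta_0$ is the unique element of the bisection $\supp'(f)$ with $s(\eta_0)=\rho(\nu)$. Hence $\varphi|_{W_f}$ factors as the composite of the homeomorphism $W_f\cong\rho\inv(s(\supp'(f)))$, the homeomorphism $\nu\mapsto\phi_\nu$ onto $\dom(f)$ (Corollary~\ref{cor:specB-is-spectrum} with Proposition~\ref{prop:from-alpha-to-tilde-alpha}\ref{item:dom-f}), and the canonical homeomorphism $x\mapsto[\alpha_f(x),f,x]$ of $\dom(f)$ onto the basic open set $\{[\alpha_f(x),f,x]:x\in\dom(f)\}$ of the Weyl groupoid. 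Thus each $\varphi|_{W_f}$ is a homeomorphism onto an open set; since the $W_f$ cover the domain and, by Lemma~\ref{lem:bisections-enough-for-Weyl-gpd}, their images cover $\mc{G}_{(A,B)}$, the bijection $\varphi$ is continuous and open, hence an isomorphism of topological groupoids.

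I expect the main obstacle to be the homeomorphism step rather than the algebra: one must verify carefully that on $W_f$ the groupoid coordinate $\dot\eta$ is genuinely determined by $\nu$ through the bisection $\supp'(f)$, so that $\varphi|_{W_f}$ reduces to the single-chart description of the Weyl-groupoid topology, and one must invoke—rather than reprove—the fact from the Kumjian--Renault construction that the charts $x\mapsto[\alpha_f(x),f,x]$ are homeomorphisms onto basic open sets.
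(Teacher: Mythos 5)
Your proposal is correct, and its algebraic half is essentially the paper's own argument: well-definedness via Proposition~\ref{prop:from-alpha-to-tilde-alpha}\ref{item:dom-f}--\ref{item:from-alpha-to-tilde-alpha} and Proposition~\ref{prop:Weyl-gpd-equality-equivalences}, multiplicativity via the product $hg$ supported on the bisection $\supp'(h)\cdot\supp'(g)$, and bijectivity via Lemma~\ref{lem:bisections-enough-for-Weyl-gpd}, Lemma~\ref{lem:phionetoone}, and Proposition~\ref{prop:Weyl-gpd-equality-equivalences} exactly as in Lemmas~\ref{lem:varphi-welldefined} and~\ref{lem:varphi-onto} (your separate treatment of units is superfluous, since a multiplicative map of groupoids preserves units and inverses automatically). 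Where you genuinely depart from the paper is the homeomorphism step. The paper (Lemma~\ref{lem:varphi-cts}) proves continuity and openness by two direct net arguments: for continuity it stabilizes the representing function $f_i=f$ eventually and invokes Lemma~\ref{lem:phi-continuous} and continuity of $\tilde\alpha$; for openness it runs a more delicate net argument through Lemma~\ref{lem:phi-open} and the uniqueness of the elements $\tau_i\in\supp'(f)$ with prescribed source. You instead give an atlas argument: the sets $W_f$ are open and cover $\qgpd\ltimes\specB$, their images cover $\mc{G}_{(A,B)}$ by Lemma~\ref{lem:bisections-enough-for-Weyl-gpd}, and on each $W_f$ the map $\varphi$ factors through three homeomorphisms; this is correct, including the key observation that the first coordinate is redundant on $W_f$, which holds because $s_{\qgpd}(\dot\eta)=s_{\gpd}(\eta)$ and $\supp'(f)$ is a bisection. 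Your route is shorter and isolates all of the topology in one standard fact about Renault's germ topology, namely that $x\mapsto[\alpha_f(x),f,x]$ is a homeomorphism of $\dom(f)$ onto a basic open set. That fact is true and fair to cite (injectivity and openness are immediate from the definition of the basic open sets; continuity reduces to openness of germ-equality loci), but be aware that it is precisely the content the paper chooses to re-derive by hand, so you should at least include the one-line germ argument rather than a bare citation. In short: your version exhibits $\varphi$ locally as a composition of known homeomorphisms and is arguably cleaner; the paper's version is self-contained in that it never presupposes the chart structure of the Weyl groupoid.
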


We point out here that this result is a significant strengthening of \cite[Theorem 5.2]{DGNRW:Cartan}. Not only is Theorem \ref{thm:varphi}  true for {\'e}tale groupoids (not just discrete groups), but we also do not need to assume that $\tilde{\alpha}$ is topologically free. Instead, as our theorem suggests, this simply follows from $\mc{G}_{(A,B)}$ being topologically principal. Moreover, Theorem \ref{thm:varphi} applies in the setting of \cite[Section 3]{IKRSW:2020} if one assumes the groupoids involved to be {\'e}tale, as discussed in the following remark.

\begin{rmk}
\label{rmk:comparison-with-IKRSW} 
When the 2-cocycle $c$ on the {\'e}tale groupoid $\gpd $ is trivial, $\specB$ is precisely the dual bundle $\widehat{\agpd }$ used in \cite[Section 3]{IKRSW:2020}. Moreover, the right action of $\gpd /\agpd $ on $\widehat{\agpd }$ given at the bottom of \cite[page 14]{IKRSW:2020},
\[ \chi \cdot \dot \gamma (a) = \chi(\gamma a \gamma\inv),\]
is precisely the left action $\tilde \alpha_{\dot \gamma\inv}(\phi_\chi)$ of $\dot \gamma\inv \in \gpd /\agpd $ on $\phi_\chi$ in this case. In other words, Theorem \ref{thm:varphi} establishes that the groupoid $\widehat{\agpd } \rtimes \qgpd $ of \cite[Theorem 3.4]{IKRSW:2020} is indeed the Weyl groupoid, if $\gpd$ is {\'e}tale and $C^{*}_r(\agpd )$ is Cartan in $C^{*}_r(\gpd )$.
\end{rmk}

We will use the rest of this section to prove Theorem \ref{thm:varphi} through a series of lemmata.

\begin{lemma}\label{lem:varphi-welldefined}\label{lem:varphi-homom}
    The map $ \varphi \colon  \qgpd   \ltimes \specB  \to \mc{G}_{(A,B)}$ 
    of Equation \eqref{eq:defn-of-varphi}
    is a 
    well-defined groupoid homomorphism.
\end{lemma}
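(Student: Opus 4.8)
The plan is to verify the two assertions separately: first that $\varphi$ is well-defined -- that the two bracketed expressions in Equation~\eqref{eq:defn-of-varphi} agree and that the value is independent of the chosen $f$ -- and then that $\varphi$ respects the groupoid operations. Both halves will be direct appeals to Propositions~\ref{prop:from-alpha-to-tilde-alpha} and~\ref{prop:Weyl-gpd-equality-equivalences} together with the definitions of the two groupoid structures.

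For well-definedness, fix $(\dot\gamma, \chi) \in \qgpd \ltimes \specB$ and a function $f$ supported on a bisection with $\dot\gamma \in q(\supp'(f))$. Since $s_{\qgpd}(\dot\gamma) = \rho(\chi)$ and $q$ fixes units, any representative of $\dot\gamma$ lying in $\supp'(f)$ has source $\rho(\chi)$; as $\supp'(f)$ is a bisection, this singles out the unique $\gamma$ with $\{\gamma\} = \supp'(f)\cap \gpd_{\rho(\chi)}$, and Proposition~\ref{prop:from-alpha-to-tilde-alpha}\ref{item:dom-f} gives $\phi_\chi \in \dom(f)$. Part~\ref{item:from-alpha-to-tilde-alpha} of the same proposition then yields $\alpha_f(\phi_\chi) = \phi_{\tilde\alpha_{\dot\gamma}(\chi)}$, so the two expressions in Equation~\eqref{eq:defn-of-varphi} coincide. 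To see independence of $f$, suppose $f_1, f_2 \in N$ are two valid choices with open supports $X^1, X^2$. Each $X^i \cap \gpd_{\rho(\chi)}$ is a singleton whose image under $q$ is $\dot\gamma$, so $q(X^1_{\rho(\chi)}) = q(X^2_{\rho(\chi)})$; this is condition~\ref{item:same-on-rho-chi} of Proposition~\ref{prop:Weyl-gpd-equality-equivalences}, whose equivalence with~\ref{item:same-in-Weyl} shows $[\alpha_{f_1}(\phi_\chi), f_1, \phi_\chi] = [\alpha_{f_2}(\phi_\chi), f_2, \phi_\chi]$.

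For the homomorphism property, take composable elements $(\dot\tau, \chi_1), (\dot\gamma, \chi_2)$, so that $\chi_1 = \tilde\alpha_{\dot\gamma}(\chi_2)$ and $(\dot\tau,\chi_1)(\dot\gamma,\chi_2) = (\dot\tau\dot\gamma, \chi_2)$. Choose $f, g \in N$ with $\dot\gamma \in q(\supp'(f))$ and $\dot\tau \in q(\supp'(g))$, and let $\gamma \in \supp'(f)$, $\tau \in \supp'(g)$ be the representatives with $s(\gamma) = \rho(\chi_2)$ and $s(\tau) = \rho(\chi_1) = r(\gamma)$. Since $\alpha_f(\phi_{\chi_2}) = \phi_{\chi_1}$, the images $\varphi(\dot\tau,\chi_1)$ and $\varphi(\dot\gamma,\chi_2)$ are composable in $\mc{G}_{(A,B)}$, and the Weyl groupoid multiplication gives
\[ \varphi(\dot\tau, \chi_1)\cdot\varphi(\dot\gamma, \chi_2) = [\alpha_g(\phi_{\chi_1}), g, \phi_{\chi_1}]\cdot[\alpha_f(\phi_{\chi_2}), f, \phi_{\chi_2}] = [\alpha_{gf}(\phi_{\chi_2}), gf, \phi_{\chi_2}]. \]
It remains to recognize the right-hand side as $\varphi(\dot\tau\dot\gamma, \chi_2)$. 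The convolution $gf$ lies in $N$ because $\supp(gf) \subseteq \supp(g)\cdot\supp(f)$ is a product of bisections, hence a bisection. Moreover $\tau\gamma$ is the unique element of this product with source $\rho(\chi_2)$, and the unique factorization $\tau\gamma = \tau\cdot\gamma$ through $\supp(g)\times\supp(f)$ yields $gf(\tau\gamma) = g(\tau)f(\gamma)c(\tau,\gamma) \neq 0$; thus $\dot\tau\dot\gamma = q(\tau\gamma) \in q(\supp'(gf))$, so $gf$ is a valid choice for $(\dot\tau\dot\gamma, \chi_2)$, proving multiplicativity. Preservation of inverses follows similarly, using $f^* \in N$ as representative for $\dot\gamma^{-1}$ together with $\alpha_{f^*} = \alpha_f\inv$, so that $\varphi(\dot\gamma\inv, \tilde\alpha_{\dot\gamma}(\chi)) = [\phi_\chi, f^*, \phi_{\tilde\alpha_{\dot\gamma}(\chi)}] = \varphi(\dot\gamma,\chi)\inv$.

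The main obstacle is the bookkeeping in the multiplicativity step: confirming that the convolution $gf$ is again bisection-supported and verifying, via the unique factorization of $\tau\gamma$, that $\dot\tau\dot\gamma$ genuinely lies in $q(\supp'(gf))$. Once that is in place, the identification of products reduces to the composition rule for $\mc{G}_{(A,B)}$.
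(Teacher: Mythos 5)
Your proof is correct and follows essentially the same route as the paper's: well-definedness via Proposition~\ref{prop:from-alpha-to-tilde-alpha}\ref{item:dom-f} and the implication \ref{item:same-on-rho-chi}$\implies$\ref{item:same-in-Weyl} of Proposition~\ref{prop:Weyl-gpd-equality-equivalences}, and multiplicativity by checking that the convolution product of the two chosen bisection-supported normalizers is itself a valid representative for the product element. Your explicit treatment of inverses is a harmless addition the paper omits, since a multiplicative map between groupoids automatically preserves units and inverses.
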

\begin{proof}
Let $(\dot \gamma, \chi) \in \qgpd\ltimes \specB$ and $f \in N$ satisfy $\dot \gamma \in q(\supp'(f)).$
This assumption
implies $\rho(\chi)=s(\gamma)\in s(\supp'(f))$, which guarantees that $\phi_\chi \in \dom (f)$ by Proposition~\ref{prop:from-alpha-to-tilde-alpha}\ref{item:dom-f}, so that $[\phi_{\tilde{\alpha}_{\dot{\gamma}}(\chi)},  f , \phi_\chi]$ is indeed an element of $\mc{G}_{(A,B)}$. Moreover, this element is independent of the choice of $f$ by Proposition~\ref{prop:Weyl-gpd-equality-equivalences}, \ref{item:same-on-rho-chi}$\implies$\ref{item:same-in-Weyl}. 
In other words, $\varphi$ is well-defined.

    Next suppose $\bigl((\dot{ \tau}, \chi'), (\dot{\gamma},\chi)\bigr)$ is a composable pair in $\qgpd  \ltimes \specB $,  i.e.\ $\chi' = \tilde{\alpha}_{\dot{\gamma}}(\chi)$. 
    It follows that $\varphi$ takes this composable pair to a composable pair:
    \[s (\varphi(\dot\tau, \chi')) = \phi_{\chi'} = \phi_{\tilde{\alpha}_{\dot \gamma}(\chi)} =r(\varphi(\dot\gamma, \chi)).\]
  Moreover, if $g,h\in N$ with $\dot{ \tau} \in q(\supp'(g))$ and $\dot{\gamma}\in q(\supp' (h))$, then
    \[\varphi(\dot{ \tau}, \chi') \varphi (\dot{\gamma},\chi) = [\phi_{\tilde{\alpha}_{\dot{ \tau}}(\chi')}, g, \phi_{\chi'}] [\phi_{\tilde{\alpha}_{\dot{\gamma}}(\chi)}, h, \phi_{\chi}] = [\phi_{\tilde{\alpha}_{\dot{ \tau}}( \tilde{\alpha}_{\dot{\gamma}} (\chi))},
    g h,
    \phi_{\chi} ],
   \] 
     which equals $ [\phi_{\tilde \alpha_{\dot \tau \dot \gamma}(\chi)},
    g h%
    , \phi_\chi]$ since $\tilde \alpha$ is an action.
    On the other hand,
     \[ \varphi\left((\dot{ \tau}, \chi') (\dot{\gamma},\chi)\right) = \varphi(q( \tau\gamma), \chi) = [\phi_{\tilde{\alpha}_{q( \tau\gamma)}(\chi)}, f, \phi_{\chi}],
    \]
    where $f\in N$ with $q( \tau\gamma)\in q(\supp'(f))$.
    
    In order to show that $[\phi_{\tilde{\alpha}_{q( \tau\gamma )}(\chi)}, f, \phi_{\chi}] =
    [\phi_{\tilde{\alpha}_{\dot{ \tau}{\dot{\gamma}}}(\chi)},
    g h%
    , \phi_{\chi} ]
    ,$
    it suffices to show that, like $f$, $
    g h%
    $ is an element of $N$ with $q( \tau\gamma) \in q(\supp'(
    g h%
    ))$.
    
    Since $g, h \in N $, 
    $
    g h%
    $ is supported on 
   the bisection
   $\supp(g)\cdot \supp(h)$ (cf.~\cite[Lemma 3.1.4]{AidansNotes} in the untwisted case). Lastly, if $ \tau \in \supp'(g)$ and $\gamma \in \supp'(h)$ are representatives of $\dot{ \tau}$ and $\dot{\gamma}$ respectively, then
    \[0\neq  g( \tau)h(\gamma)c(\tau, \gamma) =(
    g h %
    )( \tau \gamma) ,\]
    so $q( \tau\gamma)\in q (\supp'(
    g h %
    ))$.
    Therefore, the fact that $\varphi$ is well-defined implies that $[\phi_{\tilde{\alpha}_{q( \tau\gamma)}(\chi)}, f, \phi_{\chi}] = [\phi_{\tilde{\alpha}_{\dot{ \tau}}( \tilde{\alpha}_{\dot{\gamma}}(\chi))}, 
    g h %
    , \phi_{\chi} ]$.
\end{proof}
    
\begin{lemma}\label{lem:varphi-onto}\label{lem:varphi-injective-wo-principalness}
    The  
    groupoid homomorphism   
    $\varphi\colon  \qgpd  \ltimes \specB  \to \mc{G}_{(A,B)}$ is a
    bijection.
\end{lemma}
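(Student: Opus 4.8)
The plan is to prove surjectivity and injectivity separately. Since $\varphi$ is already known to be a well-defined groupoid homomorphism (Lemma~\ref{lem:varphi-homom}), and since the substantive analytic work has been packaged into Propositions~\ref{prop:from-alpha-to-tilde-alpha} and~\ref{prop:Weyl-gpd-equality-equivalences}, both directions should be comparatively short applications of those results.

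For surjectivity, I would start from an arbitrary element of $\mc{G}_{(A,B)}$ and use Lemma~\ref{lem:bisections-enough-for-Weyl-gpd} to represent it as $[\alpha_n(x), n, x]$ with $n \in N$ (so $\supp(n)$ is a bisection) and $x \in \dom(n)$. By Lemma~\ref{lem:phionetoone} the point $x$ equals $\phi_\chi$ for a unique $\chi \in \specB$, and Proposition~\ref{prop:from-alpha-to-tilde-alpha}\ref{item:dom-f} converts the condition $\phi_\chi \in \dom(n)$ into $\rho(\chi) \in s(\supp'(n))$. Since $\supp'(n)$ is a bisection, there is a unique $\gamma$ with $\{\gamma\} = \supp'(n) \cap \gpd_{\rho(\chi)}$; setting $\dot\gamma := q(\gamma)$ produces a composable pair $(\dot\gamma, \chi) \in \qgpd \ast_{\rho} \specB$, because units are fixed by $q$ and so $s_{\qgpd}(\dot\gamma) = s(\gamma) = \rho(\chi)$, with $\dot\gamma \in q(\supp'(n))$. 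Proposition~\ref{prop:from-alpha-to-tilde-alpha}\ref{item:from-alpha-to-tilde-alpha} then gives $\alpha_n(\phi_\chi) = \phi_{\tilde\alpha_{\dot\gamma}(\chi)}$, so that, using $n$ itself as the defining function, $\varphi(\dot\gamma, \chi) = [\alpha_n(\phi_\chi), n, \phi_\chi] = [\alpha_n(x), n, x]$, as required.

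For injectivity, suppose $\varphi(\dot\gamma_1, \chi_1) = \varphi(\dot\gamma_2, \chi_2)$, with the two classes represented by functions $f_i \in N$ satisfying $\dot\gamma_i \in q(\supp'(f_i))$. The definition of the equivalence relation $\sim$ on $D$ forces the unit-space coordinates to agree, i.e.\ $\phi_{\chi_1} = \phi_{\chi_2}$; by the injectivity half of Lemma~\ref{lem:phionetoone} this yields $\chi_1 = \chi_2 =: \chi$. It then remains only to deduce $\dot\gamma_1 = \dot\gamma_2$. Since $\rho(\chi) \in s(\supp'(f_i))$, we have $\phi_\chi \in \dom(f_1) \cap \dom(f_2)$, so condition~\ref{item:same-in-Weyl} of Proposition~\ref{prop:Weyl-gpd-equality-equivalences} holds with $X^i = \supp'(f_i)$; the implication \ref{item:same-in-Weyl}$\implies$\ref{item:same-on-rho-chi} then gives $q(X^1_{\rho(\chi)}) = q(X^2_{\rho(\chi)})$. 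As the representative of $\dot\gamma_i$ having source $\rho(\chi)$ is exactly the unique element of $X^i_{\rho(\chi)}$, this equality reads precisely $\dot\gamma_1 = \dot\gamma_2$, so $(\dot\gamma_1, \chi_1) = (\dot\gamma_2, \chi_2)$. (Alternatively, one may invoke the ``Moreover'' clause of Lemma~\ref{lem:3-implies-2}\ref{item:Weyl-gpd-implies-points} directly.)

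I expect the only real subtlety, and the point most worth double-checking, to be the bookkeeping around the quotient's source map and the singleton intersections: specifically, verifying that the representative of $\dot\gamma_i$ entering the definition of $\varphi$ coincides with the unique element of $\supp'(f_i) \cap \gpd_{\rho(\chi)}$, so that matching $q(X^1_{\rho(\chi)})$ against $q(X^2_{\rho(\chi)})$ genuinely forces $\dot\gamma_1 = \dot\gamma_2$. It is worth emphasizing that neither direction uses topological freeness of $\tilde\alpha$ or topological principality of $\mc{G}_{(A,B)}$, which is exactly the improvement over \cite[Theorem 5.2]{DGNRW:Cartan} advertised after Theorem~\ref{thm:varphi}.
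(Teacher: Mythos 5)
Your proposal is correct and follows essentially the same route as the paper's own proof: surjectivity via Lemma~\ref{lem:bisections-enough-for-Weyl-gpd}, the bijectivity of $\phi$ (Lemma~\ref{lem:phionetoone}), and Proposition~\ref{prop:from-alpha-to-tilde-alpha}; injectivity via the forced equality $\chi_1=\chi_2$ followed by the implication \ref{item:same-in-Weyl}$\implies$\ref{item:same-on-rho-chi} of Proposition~\ref{prop:Weyl-gpd-equality-equivalences} (which the paper notes is itself the ``Moreover'' clause of Lemma~\ref{lem:3-implies-2}, matching your parenthetical alternative). Your bookkeeping point about the representative of $\dot\gamma_i$ being the unique element of $\supp'(f_i)\cap\gpd_{\rho(\chi)}$ is exactly how the paper closes the argument.
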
    
    
\begin{proof}
   By Lemma~\ref{lem:bisections-enough-for-Weyl-gpd}, we know that every element of $\mc{G}_{(A,B)}$ is of the form $
        [\alpha_{f} (x), f, x]
    $
    for $f \in N$
    and $x\in \dom(f) \sset \widehat{B}$. By Lemma~\ref{lem:phi-surjective}, we may write $x=\phi_{\chi}$ for a unique $\chi\in\specB $. Since $
    \phi_{\chi} (f^{*}f)>0$ by assumption, Proposition~\ref{prop:from-alpha-to-tilde-alpha}\ref{item:dom-f} shows that $\rho(\chi) \in s(\supp'(f))$, i.e.\ $ \supp'(f)\cap \gpd_{\rho(\chi)}=  \{\gamma\} $ for some $\gamma$. In particular,  
    $(\dot{\gamma},\chi)\in \qgpd  \ltimes \specB  .$ Moreover, 
    the fact that $\varphi$ is well-defined means that
    \[
        \varphi (\dot{\gamma},\chi) 
        =
        [\phi_{\tilde{\alpha}_{\dot{\gamma}}(\chi)},
        f
        , \phi_\chi].
    \]
    Proposition~\ref{prop:from-alpha-to-tilde-alpha}\ref{item:from-alpha-to-tilde-alpha}
    now implies that $\varphi (\dot{\gamma},\chi)  = [\alpha_{f} (x), f, x]$, so $\varphi$ is surjective.

  For injectivity, assume that $\varphi (\dot{\gamma}_{1},\chi)=\varphi(\dot{\gamma}_{2},\chi')$, i.e.\
  \[
    [\alpha_{f_{1}}(\phi_\chi), f_{1}, \phi_\chi] = [\alpha_{f_{2}}(\phi_{\chi'}), f_{2}, \phi_{\chi'}],
    \text{ where } f_{i}\in N \text{ have } \dot{\gamma_{i} }\in\supp'(f_{i}).
  \]
  This immediately forces $\chi=\chi'$ by definition of the Weyl groupoid and injectivity of 
  $\phi$
  (Lemma~\ref{lem:phionetoone}). In particular, $s(\dot{\gamma}_{1})=\rho(\chi)=s(\dot{\gamma}_{2})$.

  Proposition~\ref{prop:Weyl-gpd-equality-equivalences}, \ref{item:same-in-Weyl}$\implies$\ref{item:same-on-rho-chi}, tells us that  if $X^{i}:=\supp'(f_{i})$, then $q(X^{1}_{\rho(\chi)})=q(X^{2}_{\rho(\chi)})$. By choice of $f_{i}$, $X^{i}_{\rho(\chi)}=\{\gamma_{i} \}$, 
  so
  $\dot{\gamma}_{1}=\dot{\gamma}_{2}$ 
  and $(\dot \gamma_1, \chi) = (\dot \gamma_2, \chi')$.
\end{proof}

\begin{lemma}\label{lem:varphi-cts}\label{lem:varphi-open}
    The  
    bijective groupoid homomorphism
    $\varphi\colon  \qgpd  \ltimes \specB  \to \mc{G}_{(A,B)}$ is a
    homeomorphism.
\end{lemma}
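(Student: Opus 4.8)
The plan is to prove that $\varphi$ is a \emph{local homeomorphism}: every point of $\qgpd \ltimes \specB$ has an open neighborhood on which $\varphi$ restricts to a homeomorphism onto an open subset of $\mc{G}_{(A,B)}$. Since Lemma~\ref{lem:varphi-onto} already tells us that $\varphi$ is a bijective groupoid homomorphism, this suffices: a bijective local homeomorphism is automatically continuous and open, hence a homeomorphism. The key idea is that, although the formula for $\varphi$ involves a choice of normalizer $f$, on a suitable neighborhood of a fixed point we may use one and the same $f$ throughout, so that $\varphi$ factors through the single-normalizer map $x \mapsto [\alpha_f(x), f, x]$.

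Fix $(\dot\gamma, \chi) \in \qgpd \ltimes \specB$ and choose $f \in N$ together with a representative $\gamma \in \supp'(f)$ of $\dot\gamma$. First I would introduce the open set
\[ \mathcal{W} := \bigl(q(\supp'(f)) \times \rho\inv(s(\supp'(f)))\bigr) \cap (\qgpd \ltimes \specB), \]
which contains $(\dot\gamma, \chi)$ and is open because $q$ and $s$ are open maps (Remark~\ref{rmk:mcGs-topol-properties} and {\'e}taleness) and $\rho$ is continuous. For $(\dot\eta, \nu) \in \mathcal{W}$ we have $\dot\eta \in q(\supp'(f))$ and $\rho(\nu) \in s(\supp'(f))$, so Proposition~\ref{prop:from-alpha-to-tilde-alpha}\ref{item:dom-f} gives $\phi_\nu \in \dom(f)$ and well-definedness of $\varphi$ (Lemma~\ref{lem:varphi-welldefined}) yields
\[ \varphi(\dot\eta, \nu) = [\alpha_f(\phi_\nu), f, \phi_\nu]. \]
Since $\supp(f)$ is a bisection, each $\nu$ with $\rho(\nu) \in s(\supp'(f))$ determines a unique $\gamma_\nu \in \supp'(f)$ with $s(\gamma_\nu) = \rho(\nu)$, and then $\dot\eta = q(\gamma_\nu)$; the assignment $\nu \mapsto (q(\gamma_\nu), \nu)$ is continuous, as it uses the continuous inverse of $s|_{\supp(f)}$ together with continuity of $\rho$ and $q$. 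Hence the second-coordinate projection restricts to a homeomorphism $\mathcal{W} \xrightarrow{\ \cong\ } \rho\inv(s(\supp'(f)))$.

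It then remains to analyze the single-normalizer map $\Lambda_f \colon \dom(f) \to \mc{G}_{(A,B)}$, $x \mapsto [\alpha_f(x), f, x]$. I would show directly from the definition of the topology on the Weyl groupoid that $\Lambda_f$ is a homeomorphism onto the open set $\Theta(f) := \{[\alpha_f(x), f, x] : x \in \dom(f)\}$. It is injective because the third coordinate recovers $x$; the image of an open $U \subseteq \dom(f)$ is the basic open set obtained by taking $V = \widehat{B}$, so $\Lambda_f$ is open and $\Theta(f)$ itself is open; and the preimage under $\Lambda_f$ of a basic open set $\{[\alpha_n(y), n, y] : \alpha_n(y) \in V,\ y \in U\}$ consists of those $x \in U \cap \dom(f)$ with $\alpha_f(x) \in V$ for which $\alpha_f$ and $\alpha_n$ agree on a neighborhood of $x$, which is an open condition, so $\Lambda_f$ is continuous. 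Composing the homeomorphism $\mathcal{W} \cong \rho\inv(s(\supp'(f)))$ with the homeomorphism $\phi$ (Corollary~\ref{cor:specB-is-spectrum}), which by Proposition~\ref{prop:from-alpha-to-tilde-alpha}\ref{item:dom-f} carries $\rho\inv(s(\supp'(f)))$ onto $\dom(f)$, and then with $\Lambda_f$, exhibits $\varphi|_{\mathcal{W}}$ as a homeomorphism onto the open set $\Theta(f)$. As $(\dot\gamma, \chi)$ was arbitrary, $\varphi$ is a local homeomorphism, and the proof concludes. I expect the main obstacle to be the careful verification that $\Lambda_f$ is a homeomorphism onto an open bisection directly from the rather terse definition of the Weyl-groupoid topology; the trivialization over a single $f$ and the bisection argument for the projection are comparatively routine once well-definedness of $\varphi$ is invoked.
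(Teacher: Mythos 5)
Your proof is correct, but it takes a genuinely different route from the paper's. The paper establishes continuity and openness of $\varphi$ by two separate net-convergence arguments: for continuity it fixes a basic open neighborhood $\mc{U}(U,f,V)$ of $\varphi(\dot\gamma,\chi)$, uses well-definedness to replace $f_i$ by $f$ for large $i$, and invokes continuity of $\tilde{\alpha}$ and of $\phi$; for continuity of $\varphi\inv$ it uses openness of $\phi$ (Lemma~\ref{lem:phi-open}), injectivity of $\varphi$, and the fact that $s$ restricts to a homeomorphism on the bisection $\supp'(f)$ to recover convergence of the $\qgpd$-coordinates. You instead prove that $\varphi$ is a local homeomorphism by factoring it, near a fixed point, as the composition of three homeomorphisms: the bisection-induced trivialization $\mc{W} \cong \rho\inv(s(\supp'(f)))$, the Gelfand-type homeomorphism $\phi$ (Corollary~\ref{cor:specB-is-spectrum}, which carries $\rho\inv(s(\supp'(f)))$ onto $\dom(f)$ by Proposition~\ref{prop:from-alpha-to-tilde-alpha}\ref{item:dom-f}), and the single-normalizer map $\Lambda_f\colon x \mapsto [\alpha_f(x),f,x]$. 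The verifications you sketch for $\Lambda_f$ all go through: injectivity is immediate from the definition of $\sim$; openness holds because $\Lambda_f(U)$ is the basic open set with $V=\widehat B$ and $n=f$; and continuity holds because the set of points near which $\alpha_f$ and $\alpha_n$ agree is open and $\alpha_f$ is a partial homeomorphism, so preimages of basic open sets are open. Both arguments rest on the same pillars (well-definedness of $\varphi$ from Lemma~\ref{lem:varphi-welldefined}, Proposition~\ref{prop:from-alpha-to-tilde-alpha}\ref{item:dom-f}, the homeomorphism $\phi$, and the bisection structure of $\supp'(f)$), but your approach buys something the paper's does not: it isolates the reusable fact that each $\Lambda_f$ is a homeomorphism of $\dom(f)$ onto an open bisection of $\mc{G}_{(A,B)}$ (essentially the reason the Weyl groupoid is {\'e}tale), and it handles continuity and openness of $\varphi$ in one stroke, exhibiting $\varphi$ as matching up bisections of the two groupoids. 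The paper's net argument, by contrast, never has to unwind the equivalence relation $\sim$ beyond what Proposition~\ref{prop:Weyl-gpd-equality-equivalences} already provides, and leans entirely on the previously proved convergence criteria of Lemmas~\ref{lem:phi-continuous} and~\ref{lem:phi-open}, so it stays closer to the toolkit the paper has already assembled.
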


\begin{proof}
    To see continuity of $\varphi$, 
    suppose the net $(\dot{\gamma}_{i}, \chi_{i})_{i \in \Lambda}$ converges to $(\dot{\gamma}, \chi)$ in $\qgpd  \ltimes \specB $. Let $\varphi(\dot{\gamma}_{i}, \chi_{i}) = [\phi_{\tilde{\alpha}_{\dot{\gamma}_{i}}(\chi_{i})}, f_{i}, \phi_{\chi_{i}}]$
    and $\varphi(\dot \gamma, \chi) = [\phi_{\tilde \alpha_\gamma(\chi)}, f, \phi_\chi],$
    where $f_{i}, f \in C_c(\gpd ,c)$ are supported on  bisections such that $\dot{\gamma}_{i} \in q(\supp'(f_{i}))$ 
    and $\dot \gamma \in q(\supp'(f))$.
    
    Let $U$ be an open neighborhood of $\phi_{\tilde{\alpha}_{\dot{\gamma}}(\chi)}$ in $\widehat{B}$ and $V$ be an open neighborhood of $\phi_\chi$ in $\widehat{B}$, so that
    \begin{equation}\label{eq:def:mcU}
         \mc{U}(U,f,V):= \{[\alpha_{f}(\phi_\nu), f, \phi_\nu] \in \mc{G}_{(A,B)} \mid \alpha_{f}(\phi_\nu) \in U, \phi_\nu \in V\}
    \end{equation}
    is a basic open neighborhood
    of $[\alpha_{f}(\phi_\chi), f, \phi_\chi]$ (see \cite[p.\ 36]{renault-cartan}). We must show that
    there exists $N \in \Lambda$ such that if $i \geq N$, then $[\phi_{\tilde{\alpha}_{\dot{\gamma}_{i}}(\chi_{i})}, f_{i}, \phi_{\chi_{i}}]$ lies in $\mc{U}(U,f,V)$.
    Since $q(\supp'(f))$ is an open neighborhood of $\dot{\gamma}$ and  $\dot{\gamma}_{i}$ converges to $\dot{\gamma}$,  there exists $N_1 \in \Lambda$ such that if $i \geq N_1$ then $\dot\gamma_{i} \in q(\supp'(f))$. By the well-definedness of $\varphi$ (Lemma \ref{lem:varphi-welldefined}), we may assume that $f_{i} = f$ for $i \geq N_1$. Moreover, since $\chi_{i} $ converges to $\chi$ by assumption, Lemma \ref{lem:phi-continuous} implies there exists $N_2 \in \Lambda$ such that $\phi_{\chi_{i}} \in V$ for all $i \geq N_2$. Lastly, since $(\dot{\gamma}_{i}, \chi_{i})$ converges to $(\dot{\gamma}, \chi)$, continuity of $(\dot{\tau},\nu)\mapsto \tilde{\alpha}_{\dot{\tau}}(\nu)$ (Proposition~\ref{prop:tilde-alpha-is-action}\ref{item:continuous}) and of $\nu\mapsto\phi_\nu$ (Lemma~\ref{lem:phi-continuous}) imply that there exists $N_3 \in \Lambda$ such that $\phi_{\tilde{\alpha}_{\dot{\gamma}_{i}}(\chi_{i})} \in U$ for all $i \geq N_3$. Therefore, if 
    $i$ is greater than each of $N_{1},N_{2},N_{3}$, then $[\phi_{\tilde{\alpha}_{\dot{\gamma}_{i}}(\chi_{i})}, f, \phi_{\chi_{i}}]= [\phi_{\tilde \alpha_{\dot \gamma_i}(\chi_i)}, f_i, \phi_{\chi_i}]$ lies in 
    $\mc{U}(U,f,V)$.

    To see that $\varphi\inv$ is continuous, 
    suppose $(\dot{\gamma}_{i}, \chi_{i})_{i \in \Lambda}$ is a net in $\qgpd  \ltimes \specB $ such that $\varphi(\dot{\gamma}_{i}, \chi_{i})$ converges to some element $\WeylGpdElement$ in $\mc{G}_{(A,B)}$. If we write $\WeylGpdElement =[\alpha_{f}(\phi_\chi), f, \phi_\chi]$ for some $f\in N$ with $\phi_\chi\in\dom(f)$, then a basic open neighborhood around $\WeylGpdElement$ is of the form $\mc{U}(\widehat{B},f,V)$ (see Equation~\eqref{eq:def:mcU}), where $V\sset \dom(f)$ is some open neighborhood of $\phi_\chi$. Since $\varphi(\dot{\gamma}_{i}, \chi_{i})\to \WeylGpdElement$, we know that for any fixed $V$, there exists $N \in \Lambda$ such that if $i \geq N$ then $\varphi(\dot{\gamma}_{i}, \chi_{i})$ lies in $\mc{U}(\widehat{B},f,V)$. This means in particular that $\phi_{\chi_{i}}\in V$. Since $V$ was arbitrary, this proves that $\phi_{\chi_{i}}\to \phi_{\chi}$.
    The fact 
    that the map $\phi$ is open (Lemma~\ref{lem:phi-open})
    implies that $\chi_{i}$ converges to $\chi$.
    Since $\phi_\chi, \phi_{\chi_{i}}\in\dom(f)$, it follows from Proposition~\ref{prop:from-alpha-to-tilde-alpha}\ref{item:dom-f} that $\rho(\chi), \rho(\chi_{i})\in s(\supp'(f)),$ i.e.\ there exist (unique) $\tau,\tau_{i}\in\supp'(f)$ with $s(\tau)=\rho(\chi)$ and $s(\tau_{i})=\rho(\chi_{i}).$ The definition of $\varphi$ thus yields
    \[
        \varphi(\dot{\tau}, \chi) = [\alpha_{f}(\phi_{\chi}), f, \phi_{\chi}] = \WeylGpdElement
        \quad\text{and}\quad
        \varphi(\dot{\tau}_{i}, \chi_{i}) = [\alpha_{f}(\phi_{\chi_{i}}), f, \phi_{\chi_{i}}].
    \]
    On the other hand, since $\varphi(\dot{\gamma}_{i},\chi_{i})\in \mc{U}(\widehat{B},f,V)$, we also know that
    \[
        \varphi(\dot{\gamma}_{i},\chi_{i}) = [\alpha_{f}(\phi_{\chi_{i}}), f, \phi_{\chi_{i}}].
    \]
    Injectivity of $\varphi$ now implies $\dot{\gamma}_{i} = \dot{\tau}_{i}$. We will show that $\tau_{i}\to\tau$, so that in particular $\dot{\gamma}_{i}=\dot{\tau}_{i} \to \dot{\tau}$.
    
    Let $U$ be any open neighborhood around $\tau$ contained in $\supp'(f)$. Then, since $\gpd $ is {\'e}tale, $s(U)$ is an open neighborhood around $s(\tau)=\rho(\chi).$ As $\chi_{i}\to\chi$ and $\rho$ is continuous, there exists $i_0$ so that for all $i\geq i_0$, we have $s(\tau_{i})=\rho(\chi_{i})\in s(U)$. Since $\tau_{i}\in \supp'(f)$ and $U = s\inv(s(U)) \cap \supp'(f) 
    $, this means $\tau_{i} \in U$ for $i\geq i_0$. As $U$ was an arbitrary neighborhood around $\tau$, we conclude that $\tau_{i}$ converges to $\tau$.

    All in all, we have shown that $(\dot{\gamma}_{i},\chi_{i})$ converges to $(\dot{\tau},\chi)$, the pre-image of $\WeylGpdElement$ under $\varphi$. This completes the proof 
    of the lemma, and also of Theorem~\ref{thm:varphi}.
\end{proof}

\pagebreak[3]\section{A cocycle on the Weyl groupoid}\label{sec:cocycle}

 The standing assumptions for this section can be found on page~\pageref{assumption:Sigma}. In this section, we show that every  (continuous) section $\qsec $ of the quotient map $q$ induces a (continuous) $\mbb{T}$-valued $2$-cocycle $C^\qsec $ on the Weyl groupoid. Though the formula for $C^\qsec $ is complicated, it is reminiscent of \cite[Lemma 5.3]{renault-cartan} and will be exactly what we require to prove in Theorem~\ref{thm:psi} that $(\qgpd \ltimes\specB )\times_{C^\qsec } \mbb{T}$ and the Weyl twist $\Sigma_{(A,B)}$ are isomorphic  (even as topological groupoids, when $\qsec$ is continuous).
 
\begin{prop}\label{prop:def:cocyle-for-Weyl}
  Let $\qsec \colon\gpd/\agpd = \qgpd \to\gpd $ be a section for  $q\colon\gpd \to\qgpd $ and for each $\dot{\gamma}\in \qgpd $, choose one $f_{\dot{\gamma}}\in N$ such that $f_{\dot{\gamma}}(\qsec (\dot{\gamma})) >0$. The following defines a $\mbb{T}$-valued 2-cocycle on $ \qgpd  \ltimes \specB  $:
    \[
        C^{\qsec }\bigl( (\dot{\gamma}_{1},\tilde{\alpha}_{\dot{\gamma}_{2}}(\chi)), (\dot{\gamma}_{2},\chi)\bigr) :=
        \frac{
            \phi_\chi \bigl(\Phi( f_{q(\gamma_{1}\gamma_2)}^{*}  f_{\dot{\gamma}_{1}}  f_{\dot{\gamma}_{2}})\bigr)
        }{
            \abs{\phi_\chi \bigl(\Phi( f_{q(\gamma_{1}\gamma_2)}^{*}  f_{\dot{\gamma}_{1}}  f_{\dot{\gamma}_{2}})\bigr)}
        },
    \]
    where 
    $\Phi\colon A\to B$ is the conditional expectation. If $\qsec $ is continuous, then $C^{\qsec }$ is continuous.
\end{prop}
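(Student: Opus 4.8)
The plan is to first replace the implicit definition of $C^{\qsec}$ by an explicit closed form. Fix a composable pair, i.e.\ arguments $(\dot{\gamma}_1,\tilde{\alpha}_{\dot{\gamma}_2}(\chi))$ and $(\dot{\gamma}_2,\chi)$ with $\rho(\chi)=s(\dot{\gamma}_2)$. Since $\Phi$ restricts a function in $C_c(\gpd,c)$ to $\agpd$ and each $f_{\dot{\gamma}}$ is supported on a bisection, I would run the same support analysis as in the proof of Proposition~\ref{prop:from-alpha-to-tilde-alpha}: the only $w\in\agpd_{\rho(\chi)}$ contributing to the sum defining $\phi_\chi(\Phi(f_{q(\gamma_1\gamma_2)}^{*}f_{\dot{\gamma}_1}f_{\dot{\gamma}_2}))$ is $w:=\qsec(\dot{\gamma}_1\dot{\gamma}_2)\inv\qsec(\dot{\gamma}_1)\qsec(\dot{\gamma}_2)$, which lies in $\agpd$ because $q(w)$ is a unit. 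Evaluating the single surviving term yields
\begin{multline*}
\phi_\chi\bigl(\Phi(f_{q(\gamma_1\gamma_2)}^{*}f_{\dot{\gamma}_1}f_{\dot{\gamma}_2})\bigr)\\
=r\,\chi(w)\,\overline{c(\qsec(\dot{\gamma}_1\dot{\gamma}_2)\inv,\qsec(\dot{\gamma}_1\dot{\gamma}_2))}\,c(\qsec(\dot{\gamma}_1),\qsec(\dot{\gamma}_2))\,c(\qsec(\dot{\gamma}_1\dot{\gamma}_2)\inv,\qsec(\dot{\gamma}_1)\qsec(\dot{\gamma}_2)),
\end{multline*}
where $r=f_{q(\gamma_1\gamma_2)}(\qsec(\dot{\gamma}_1\dot{\gamma}_2))\,f_{\dot{\gamma}_1}(\qsec(\dot{\gamma}_1))\,f_{\dot{\gamma}_2}(\qsec(\dot{\gamma}_2))>0$. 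In particular the denominator in the statement is nonzero (this also follows from Lemma~\ref{lem:3-implies-2}), so $C^{\qsec}$ is well defined and $\mbb{T}$-valued; moreover it is independent of the choices of $f_{\dot{\gamma}}$ and equals the displayed unimodular product of $\chi$- and $c$-values.

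Continuity when $\qsec$ is continuous is then read off from this closed form. Given a convergent net of composable pairs $(\dot{\gamma}_1^i,\dot{\gamma}_2^i,\chi_i)\to(\dot{\gamma}_1,\dot{\gamma}_2,\chi)$, continuity of $\qsec$, of multiplication and inversion in $\gpd$, and of $c$ control every factor except $\chi_i(w_i)$, where $w_i:=\qsec(\dot{\gamma}_1^i\dot{\gamma}_2^i)\inv\qsec(\dot{\gamma}_1^i)\qsec(\dot{\gamma}_2^i)$. Since $w_i\to w$ in $\agpd$ with ${ p }(w_i)=\rho(\chi_i)$ and ${ p }(w)=\rho(\chi)$, Condition~\ref{item2:convergence-in-specB} of Proposition~\ref{shortened-prop:mcB-cong-widehatB} gives $\chi_i(w_i)\to\chi(w)$, whence $C^{\qsec}$ is continuous.

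The main work is the cocycle identity~\eqref{eq:cocycle}, which I would prove through the positive functional $\omega_\chi:=\phi_\chi\circ\Phi$ on $A$ rather than by expanding $c$-factors; write $\operatorname{ph}(z):=z/\abs{z}$ for $z\neq0$ and abbreviate $f_i:=f_{\dot{\gamma}_i}$. For a composable triple with innermost base $\chi$, put $\psi:=\tilde{\alpha}_{\dot{\gamma}_3}(\chi)$. The three factors $C^{\qsec}(x,yz)$, $C^{\qsec}(y,z)$, $C^{\qsec}(xy,z)$ are the phases of $\omega_\chi$ applied to $f_{\dot{\gamma}_1\dot{\gamma}_2\dot{\gamma}_3}^{*}f_1f_{\dot{\gamma}_2\dot{\gamma}_3}$, to $f_{\dot{\gamma}_2\dot{\gamma}_3}^{*}f_2f_3$, and to $f_{\dot{\gamma}_1\dot{\gamma}_2\dot{\gamma}_3}^{*}f_{\dot{\gamma}_1\dot{\gamma}_2}f_3$, while the only factor based at $\psi$ is $C^{\qsec}(x,y)=\operatorname{ph}\,\omega_\psi(f_{\dot{\gamma}_1\dot{\gamma}_2}^{*}f_1f_2)$. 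I would convert this last factor to base $\chi$ as follows: since $\alpha_{f_3}(\phi_\chi)=\phi_\psi$ by Proposition~\ref{prop:from-alpha-to-tilde-alpha}, the defining property~\eqref{eq:defining-eq-for-alpha} of $\alpha_{f_3}$ gives $\phi_\psi(b)=\phi_\chi(f_3^{*}bf_3)/\phi_\chi(f_3^{*}f_3)$ for $b\in B$, and normality of $\agpd$ (Assumption~\ref{assumption:mcA-clopen-normal}) yields $\Phi(f_3^{*}Yf_3)=\Phi(f_3^{*}\Phi(Y)f_3)$ for every $Y\in A$, because for $w\in\agpd$ the bisection support of $f_3$ makes $(f_3^{*}Zf_3)(w)$ a multiple of $Z(\gamma_3w\gamma_3\inv)$ with $\gamma_3w\gamma_3\inv\in\agpd$. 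Together these rewrite $C^{\qsec}(x,y)$ as $\operatorname{ph}\,\omega_\chi\bigl((f_{\dot{\gamma}_1\dot{\gamma}_2}f_3)^{*}f_1f_2f_3\bigr)$, up to the positive factor $\phi_\chi(f_3^{*}f_3)>0$.

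After this reduction every factor is the phase of a GNS inner product $\omega_\chi(n^{*}m)=\langle\pi(m)\xi,\pi(n)\xi\rangle$ (in the GNS triple $(\pi,\xi)$ of $\omega_\chi$) for normalizers $n,m$ supported on bisections. Here $f_1f_{\dot{\gamma}_2\dot{\gamma}_3}$, $f_{\dot{\gamma}_1\dot{\gamma}_2\dot{\gamma}_3}$, $f_{\dot{\gamma}_1\dot{\gamma}_2}f_3$ and $f_1f_2f_3$ all represent the \emph{same} Weyl groupoid element over $\chi$, and $f_2f_3$, $f_{\dot{\gamma}_2\dot{\gamma}_3}$ represent a common element (by associativity in $\qgpd$ and Proposition~\ref{prop:Weyl-gpd-equality-equivalences}). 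The single-term computation of the first paragraph shows that two normalizers representing the same Weyl element over $\chi$ satisfy $\abs{\omega_\chi(n^{*}m)}^2=\omega_\chi(n^{*}n)\,\omega_\chi(m^{*}m)$, i.e.\ equality in Cauchy--Schwarz, so their GNS vectors are parallel. Since the phase of an inner product of parallel vectors is multiplicative, the three factors attached to the triple product telescope, and~\eqref{eq:cocycle} reduces to the single equality
\[
\operatorname{ph}\langle\pi(f_2f_3)\xi,\pi(f_{\dot{\gamma}_2\dot{\gamma}_3})\xi\rangle=\operatorname{ph}\langle\pi(f_1f_2f_3)\xi,\pi(f_1f_{\dot{\gamma}_2\dot{\gamma}_3})\xi\rangle.
\]
This holds because $\pi(f_1f_2f_3)\xi=\pi(f_1)\pi(f_2f_3)\xi$ and $\pi(f_1f_{\dot{\gamma}_2\dot{\gamma}_3})\xi=\pi(f_1)\pi(f_{\dot{\gamma}_2\dot{\gamma}_3})\xi$, while $\pi(f_2f_3)\xi$ and $\pi(f_{\dot{\gamma}_2\dot{\gamma}_3})\xi$ are parallel, so the right-hand inner product is a positive multiple of the left-hand one. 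I expect the base-change step (combining~\eqref{eq:defining-eq-for-alpha} with normality of $\agpd$) and the verification of parallelism to be the main obstacles; everything else is bookkeeping of unimodular scalars.
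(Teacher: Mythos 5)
Your proposal is correct, and its handling of the cocycle identity is genuinely different from the paper's. The parts that coincide: your single-surviving-term evaluation of $\phi_\chi\bigl(\Phi(f_{q(\gamma_1\gamma_2)}^{*}f_{\dot{\gamma}_1}f_{\dot{\gamma}_2})\bigr)$ (giving the closed form as a positive number times $\chi(w)$ times explicit $c$-factors, hence $\mbb{T}$-valuedness and independence of the choices of $f_{\dot{\gamma}}$) and your factorwise continuity argument are essentially what the paper does in Lemma~\ref{lem:C-cts}; and your base-change step for $C^{\qsec}(x,y)$, combining Equation~\eqref{eq:defining-eq-for-alpha} for $\alpha_{f_{\dot{\gamma}_3}}$ with $\Phi(f_{\dot{\gamma}_3}^{*}Yf_{\dot{\gamma}_3})=\Phi(f_{\dot{\gamma}_3}^{*}\Phi(Y)f_{\dot{\gamma}_3})$ via normality of $\agpd$, is literally the paper's steps $(\dagger)$ and $(\ddagger)$. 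The divergence comes after that: the paper expands all four factors into products of values of $c$, cancels, and reduces \eqref{eq:cocycle} to the long identity \eqref{eq:main-cocycle-claim-for-C}, whose verification consumes the entire Appendix (Lemma~\ref{lem:main-cocycle-claim-for-C}); you instead pass to the GNS representation $(\pi,\xi)$ of the state $\omega_\chi=\phi_\chi\circ\Phi$ and use that any two elements of $N$ defining the same Weyl-groupoid element over $\phi_\chi$ attain equality in Cauchy--Schwarz --- indeed, your single-term computation together with Proposition~\ref{prop:Weyl-gpd-equality-equivalences}\ref{item:same-in-Weyl}$\implies$\ref{item:same-on-rho-chi} gives $\abs{\omega_\chi(n^{*}m)}^2=\omega_\chi(n^{*}n)\,\omega_\chi(m^{*}m)$ --- so their GNS vectors are parallel and phases telescope. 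I checked the reduction: writing $f_i:=f_{\dot{\gamma}_i}$, $f_{23}:=f_{q(\gamma_2\gamma_3)}$, $f_{123}:=f_{q(\gamma_1\gamma_2\gamma_3)}$, the left side of \eqref{eq:cocycle} collapses to $\operatorname{ph}\langle\pi(f_1f_2f_3)\xi,\pi(f_{123})\xi\rangle$, dividing by $\operatorname{ph}\langle\pi(f_1f_{23})\xi,\pi(f_{123})\xi\rangle$ turns the identity into exactly your displayed equality, and that equality holds because $\pi(f_1)$ carries the parallel pair $\bigl(\pi(f_2f_3)\xi,\pi(f_{23})\xi\bigr)$ to a parallel pair with the same scalar ratio, all relevant vectors being nonzero since the supports of the corresponding normalizers meet $\gpd_{\rho(\chi)}$. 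What each approach buys: the paper's expansion is elementary and self-contained, needing nothing beyond the cocycle identity for $c$, but costs a page of bookkeeping; your argument eliminates that computation entirely and isolates the conceptual reason the phases must cohere (normalizers over a common Weyl element span a single ray in the GNS space of $\omega_\chi$), a lemma of independent interest, at the price of invoking GNS machinery and of certifying nonvanishing of each intermediate inner product --- which your Cauchy--Schwarz equality itself supplies, as it makes each such inner product a product of strictly positive norms.
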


We will deal with the algebraic and topological
aspects of the proof
separately.

\begin{lemma}
    The map $C^{\qsec }$ from Proposition~\ref{prop:def:cocyle-for-Weyl} is 
    $\mbb{T}$-valued
    and satisfies the cocycle condition
    of Equation \eqref{eq:cocycle}.
\end{lemma}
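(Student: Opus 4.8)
The statement splits into two independent assertions, and the plan is to dispatch $\mathbb{T}$-valuedness first. Since $C^{\qsec}$ is by construction of the form $z/\abs{z}$, it suffices to show that the numerator
$z = \phi_\chi\bigl(\Phi(f_{q(\gamma_1\gamma_2)}^{*} f_{\dot{\gamma}_1} f_{\dot{\gamma}_2})\bigr)$ never vanishes on a composable pair $\bigl((\dot{\gamma}_1,\tilde{\alpha}_{\dot{\gamma}_2}(\chi)),(\dot{\gamma}_2,\chi)\bigr)$, where $\rho(\chi)=s(\dot{\gamma}_2)$. First I would observe that $g:=f_{\dot{\gamma}_1}f_{\dot{\gamma}_2}$ lies in $N$ (a product of normalizers supported on bisections is supported on the product bisection), and that both $g$ and $h:=f_{q(\gamma_1\gamma_2)}$ have a unique representative over $\rho(\chi)$, namely $\qsec(\dot{\gamma}_1)\qsec(\dot{\gamma}_2)\in\supp'(g)$ and $\qsec(\dot{\gamma}_1\dot{\gamma}_2)\in\supp'(h)$. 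Both project under $q$ to $\dot{\gamma}_1\dot{\gamma}_2$, so $q\bigl(\supp'(g)\cap\gpd_{\rho(\chi)}\bigr)=\{\dot{\gamma}_1\dot{\gamma}_2\}=q\bigl(\supp'(h)\cap\gpd_{\rho(\chi)}\bigr)$. Proposition~\ref{prop:Weyl-gpd-equality-equivalences}, \ref{item:same-on-rho-chi}$\implies$\ref{item:same-in-Weyl}, then gives $[\alpha_g(\phi_\chi),g,\phi_\chi]=[\alpha_h(\phi_\chi),h,\phi_\chi]$, whence Lemma~\ref{lem:3-implies-2}\ref{item:Weyl-gpd-implies-points} yields $z=\phi_\chi(\Phi(h^{*}g))\neq 0$. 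This proves $C^{\qsec}$ is $\mathbb{T}$-valued.

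For the cocycle condition I would avoid a direct manipulation of the cocycle identity for $c$ and instead exploit associativity in the Weyl twist. The key ingredient I would first establish is a \emph{scalar lemma}: if $n,m\in N(B)$ and $x\in\dom(n)\cap\dom(m)$ satisfy $[\alpha_n(x),n,x]=[\alpha_m(x),m,x]$ in $\mc{G}_{(A,B)}$, then $x(\Phi(m^{*}n))\neq 0$ and
\[
\llbracket \alpha_n(x), n, x\rrbracket = \frac{x(\Phi(m^{*}n))}{\abs{x(\Phi(m^{*}n))}}\,\llbracket \alpha_m(x), m, x\rrbracket
\]
in $\Sigma_{(A,B)}$. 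To prove it, use Proposition~\ref{prop:Weyl-gpd-similar-to-twist} to obtain $b_1,b_2\in B$ with $x(b_i)\neq 0$ and $nb_1=mb_2$; multiplying on the right by $b_1^{*}$ gives $n(b_1 b_1^{*})=m(b_2 b_1^{*})$ with $x(b_1 b_1^{*})>0$, and setting $\lambda:=x(b_2 b_1^{*})/\abs{x(b_2 b_1^{*})}$ one checks $n(b_1 b_1^{*})=(\lambda m)(\bar{\lambda}\,b_2 b_1^{*})$ with both $B$-coefficients positive at $x$, so the two triples are $\approx$-equivalent up to the $\mathbb{T}$-factor $\lambda$. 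Applying $x\circ\Phi$ to $m^{*}nb_1=(m^{*}m)b_2$ gives $x(\Phi(m^{*}n))\,x(b_1)=x(m^{*}m)\,x(b_2)$, and since $x(m^{*}m)>0$ the phase of $x(\Phi(m^{*}n))$ equals that of $x(b_2)\overline{x(b_1)}=x(b_2 b_1^{*})$, i.e.\ $\lambda$.

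With the scalar lemma in hand, define the lift $\tilde{\varphi}(\dot{\gamma},\chi):=\llbracket \phi_{\tilde{\alpha}_{\dot{\gamma}}(\chi)}, f_{\dot{\gamma}}, \phi_\chi\rrbracket\in\Sigma_{(A,B)}$ of $\varphi(\dot{\gamma},\chi)$. The twist multiplication rule, together with $\alpha_{f_{\dot{\mu}}}(\phi_{\tilde{\alpha}_{\dot{\nu}}(\psi)})=\phi_{\tilde{\alpha}_{\dot{\mu}\dot{\nu}}(\psi)}$ (Proposition~\ref{prop:from-alpha-to-tilde-alpha}), gives $\tilde{\varphi}(\dot{\mu},\tilde{\alpha}_{\dot{\nu}}(\psi))\,\tilde{\varphi}(\dot{\nu},\psi)=\llbracket \phi_{\tilde{\alpha}_{\dot{\mu}\dot{\nu}}(\psi)}, f_{\dot{\mu}}f_{\dot{\nu}}, \phi_\psi\rrbracket$, and the scalar lemma (with $n=f_{\dot{\mu}}f_{\dot{\nu}}$, $m=f_{q(\mu\nu)}$, $x=\phi_\psi$) rewrites this as $C^{\qsec}\bigl((\dot{\mu},\tilde{\alpha}_{\dot{\nu}}(\psi)),(\dot{\nu},\psi)\bigr)\,\tilde{\varphi}(\dot{\mu}\dot{\nu},\psi)$. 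Thus $\tilde{\varphi}(uv)\,C^{\qsec}(u,v)=\tilde{\varphi}(u)\tilde{\varphi}(v)$ for every composable pair. For a composable triple $(u,v,w)$, expanding $\tilde{\varphi}(u)\tilde{\varphi}(v)\tilde{\varphi}(w)$ in the two possible ways and using associativity of the twist multiplication, centrality and freeness of the $\mathbb{T}$-action (so that the common factor $\tilde{\varphi}(uvw)$ cancels) yields exactly the cocycle identity \eqref{eq:cocycle} for $C^{\qsec}$. The main obstacle is the scalar lemma: isolating the precise $\mathbb{T}$-factor relating two lifts of one Weyl-groupoid element and matching it, with the correct $\mathbb{T}$-action convention, to the phase of $x(\Phi(m^{*}n))$.
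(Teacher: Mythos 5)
Your proposal is correct, and while your treatment of $\mbb{T}$-valuedness is close in substance to the paper's (the paper computes the unique point of $\supp'(\Phi(f))$ over $\rho(\chi)$ by hand and then applies Proposition~\ref{prop:from-alpha-to-tilde-alpha}\ref{item:dom-f} and the $C^*$-identity, whereas you reach the same nonvanishing by routing through Proposition~\ref{prop:Weyl-gpd-equality-equivalences}, \ref{item:same-on-rho-chi}$\implies$\ref{item:same-in-Weyl}, and Lemma~\ref{lem:3-implies-2}\ref{item:Weyl-gpd-implies-points}), your proof of the cocycle condition takes a genuinely different route. The paper argues by brute force: it rewrites the four factors as evaluations of convolution products $F_i$ at explicit points $x_i\in\agpd_{\rho(\chi)}$, and reduces the claim to a bare $2$-cocycle identity for $c$ (Equation~\eqref{eq:main-cocycle-claim-for-C}) whose verification is a page-long manipulation in the Appendix (Lemma~\ref{lem:main-cocycle-claim-for-C}). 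You instead exhibit $C^{\qsec}(u,v)$ as the $\mbb{T}$-discrepancy between $\tilde{\varphi}(u)\tilde{\varphi}(v)$ and $\tilde{\varphi}(uv)$ for the section-induced lift $\tilde{\varphi}$ into the Weyl twist, so that the identity follows formally from associativity of $\Sigma_{(A,B)}$ together with centrality and freeness of its $\mbb{T}$-action; this is the classical reason obstruction cocycles of sections of $\mbb{T}$-extensions satisfy the cocycle identity. Your ``scalar lemma'' is essentially the paper's Proposition~\ref{prop:Weyl-twist-elt-description} (which appears only in the following section), and your proof of it is actually cleaner than the paper's: multiplying $nb_1=mb_2$ on the right by $b_1^{*}$ and extracting the phase from $x(\Phi(m^{*}n))\,x(b_1)=x(m^{*}m)\,x(b_2)$ uses only the \emph{statement} of Proposition~\ref{prop:Weyl-gpd-similar-to-twist}, whereas the paper's argument leans on the explicit form of the elements $b_i$ constructed inside that proposition's proof. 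What each approach buys: yours is shorter, conceptual, eliminates the Appendix entirely, and pre-establishes the multiplicativity relation that the paper re-derives later when proving Theorem~\ref{thm:psi}; the paper's computation is self-contained at the level of the cocycle $c$ and never needs to invoke that $\Sigma_{(A,B)}$ is a twist (i.e.\ that the $\mbb{T}$-action is free and central), which is background imported from Renault and Kumjian. In a final write-up you should state that freeness/centrality input explicitly, and note that the hypothesis of your scalar lemma for $n=f_{\dot{\mu}}f_{\dot{\nu}}$, $m=f_{q(\mu\nu)}$ is supplied by the same Proposition~\ref{prop:Weyl-gpd-equality-equivalences} argument you used for $\mbb{T}$-valuedness; with those two sentences added, your proof is complete.
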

\begin{proof}
    Let $C:= C^{\qsec }.$    First note that, if 
    we let
    $f:=f^{*}_{q(\gamma_1 \gamma_2)} f_{\dot{\gamma}_1} f_{\dot{\gamma}_2} \in N$, then
  writing $\gamma_i = \qsec (\dot{\gamma}_i),$ there exists $a \in \agpd_{s(\gamma_2)} $ with $\qsec (q(\gamma_1\gamma_2)) a = \gamma_1 \gamma_2.$ 
  Using the fact that each $f_\eta$ is supported on a bisection, one  computes that
  \[ f(a) = \overline{f_{q(\gamma_1\gamma_2)}(\gamma_1 \gamma_2) c(\gamma_1 \gamma_2 a\inv , a \gamma_2^{-1} \gamma_1^{-1})} f_1(\gamma_1) f_2 (\gamma_2) c(\gamma_1, \gamma_2) c(a \gamma_2^{-1} \gamma_1^{-1}, \gamma_1 \gamma_2)\]
  is nonzero,
  and hence  
$\{a\}= \supp'(f)\cap \agpd_{s(\gamma_2)}$.
  Since $\Phi$ is, on $C_c(\gpd )$, given by restriction 
  to $\agpd $, we see that $a\in \supp' (\Phi(f))$ also. 
  The fact that $\agpd \subseteq \gpd$ is open implies that
  $\Phi(f)\in N\cap B$.  As  $\rho(\chi)=s(\gamma_{2})=s(a)$, it follows from Proposition~\ref{prop:from-alpha-to-tilde-alpha}\ref{item:dom-f} that $\phi_\chi\in \dom (\Phi(f)),$ i.e.\
    \[
        \abs{\Omega(\Phi(f))}^2 (\phi_\chi)
        =
        \phi_\chi \bigl(\Phi(f)^{*} \Phi(f)\bigr)
        >0,
    \]
    which implies $\phi_\chi\bigl(\Phi(f)\bigr) =\Omega(\Phi(f)) (\phi_\chi) \neq 0$ by the $C^*$-identity. This shows that $C$ is 
    $\mbb{T}$-valued.
    
    For the cocycle identity (Equation~\eqref{eq:cocycle}), let
    \[
        h_{3}:=(\dot{\gamma}_3, \chi),
        \;
        h_{2}:=(\dot{\gamma}_2, \tilde{\alpha}_{\dot{\gamma}_3}(\chi)),
        \;
        h_{1}:=(\dot{\gamma}_1, \tilde{\alpha}_{\dot{\gamma}_2\dot{\gamma}_3}(\chi)).
    \]
    We need to check that
    \[
        C(h_{1}h_{2},h_{3})\ C(h_{1},h_{2})
        \overset{!}{=}
        C(h_{1},h_{2}h_{3})\ C(h_{2},h_{3}).
    \]
    In the following computation, we write $\propto$
    to denote that the left-hand side is a {\em positive} multiple of the right-hand side. 
    If we let $\tilde{C}$ denote the numerator of $C$, then our claim becomes
    \begin{equation}\label{eq:cocycle-claim}
        \tilde{C}(h_{1}h_{2},h_{3})\ \tilde{C}(h_{1},h_{2})\
        \phi_\chi ( f_{\dot{\gamma}_{3}}^{*}f_{\dot{\gamma}_{3}}) \
        \overset{!}{\propto} \
        \tilde{C}(h_{1},h_{2}h_{3})\ \tilde{C}(h_{2},h_{3}),
    \end{equation}
    where we threw in the positive number $\phi_\chi ( f_{\dot{\gamma}_{3}}^{*}f_{\dot{\gamma}_{3}})$ to eventually make up for the following annoyance: out of the four $\tilde{C}$-factors, $\tilde{C}(h_{1},h_{2})$ is {\em not} given by evaluating at $\phi_\chi$ but at $\tilde{\alpha}_{\dot{\gamma}_3}(\chi)=\alpha_{f_{\dot{\gamma}_3}} (\phi_\chi).$
    
    To protect our sanity, let us denote
    \[
        f_{i}:= f_{\dot{\gamma}_{i}},\quad
        f_{ij}:= f_{q(\gamma_{i}\gamma_j)},\quad
        f_{123}:= f_{q(\gamma_1\gamma_2\gamma_3)},
        \quad
        i,j\in\{1,2,3\}
    \]
    and 
    \[ \gamma_i := \qsec (\dot{\gamma_i}), \quad 
    \gamma_{ij} := \qsec (q(\gamma_i \gamma_j)), \quad
    \gamma_{123} := \qsec (q(\gamma_1 \gamma_2 \gamma_3)), \quad 
    i,j\in\{1,2,3\}.
    \]

    We compute
    \begin{align*}
        \tilde{C}(h_{1},h_{2}h_{3})
        =&
        \phi_\chi
            \bigl(
                \Phi(
                    f_{123}^{*}  f_{1}  f_{23}
                )
            \bigr),
        \\
        \tilde{C}(h_{2},h_{3})
        =&
        \phi_\chi
            \bigl(
                \Phi(
                    f_{23}^{*}  f_{2}  f_{3}
                )
            \bigr),
        \\
        \tilde{C}(h_{1}h_{2},h_{3})
        =&
        \phi_\chi
            \bigl(
                \Phi(
                    f_{123}^{*}  f_{12}  f_{3}
                )
            \bigr)
        ,
        \\
        \tilde{C}(h_{1},h_{2})\ \phi_\chi (f_3^{*}f_3)
        =&
        \alpha_{f_{3}} (\phi_\chi)
            \bigl(
                \Phi(
                    f_{12}^{*}  f_{1}  f_{2}
                )
            \bigr)
        \
        \phi_\chi (f_3^{*}f_3)
        \\
        \overset{(\dagger)}{=}&
        \phi_\chi
            \bigl(
                f_3^{*}
                \Phi(
                    f_{12}^{*}  f_{1}  f_{2}
                )
                f_3
            \bigr)
        \overset{(\ddagger)}{=}
        \phi_\chi
            \bigl(
                \Phi(
                    f_3^{*}f_{12}^{*}  f_{1}  f_{2}f_3
                )
            \bigr),
    \end{align*}
    where we used in $(\dagger)$ the defining property of $\alpha_{f_{3}}$, and in $(\ddagger)$ that $n \Phi(m)n^{*} = \Phi( n \Phi(m) n^{*}) = \Phi(n mn^{*}) $ for functions $n,m
    \in C_c(\gpd, c)
    $ with $n$ supported on a bisection: the first equality holds since the left-hand side is an element of $B$, and the second equality can be checked by hand, using that $\agpd \sset\Iso(\gpd )$  and that $\agpd $ is normal.
    
    We can now rephrase Equation~\eqref{eq:cocycle-claim} to
    \begin{align}\label{eq:cocycle-claim2}
    \phi_\chi
            \bigl(
                \Phi(
                    \underbrace{ f_{123}^{*} f_{1}f_{23}}_{=:F_{1}}
                )
             \bigr)
         \
         \phi_\chi
             \bigl(
                \Phi(
                    \underbrace{f_{23}^{*}  f_{2}f_{3} }_{=:F_{2}}
                )
            \bigr) \
        \overset{!}{\propto} \
        \phi_\chi
            \bigl(
                \Phi(
                    \underbrace{f_{123}^{*}  f_{12}  f_{3}}_{=:F_{3}}
                )
            \bigr)
        \
        \phi_\chi
            \bigl(
                \Phi(
                    \underbrace{f_3^{*}f_{12}^{*}  f_{1}  f_{2}f_3}_{=:F_{4}}
                )
            \bigr).
    \end{align}
    Since $F_i \in C_c(\gpd, c)$ for each $i$, by definition of $\phi_\chi$ and $\Phi$ we have
    
    \begin{align*}
        \phi_\chi\bigl(\Phi(F_{i})\bigr)
        =&
        \sum_{a\in \agpd_{\rho(\chi)}}
            \chi (a)\,
            \Phi(F_{i})(a)
        =
        \sum_{a\in \agpd_{\rho(\chi)}}
            \chi (a)\,
            F_{i}(a).
    \end{align*}
    Note that each $F_{i}$ is the product of functions in $N$, so $F_{i}$ is also supported in a bisection.
    Consequently, 
    there is a unique $x_i \in \agpd_{\rho(\chi)}$ at which $F_i$ does not vanish,
    namely
    \begin{alignat*}{2}
    x_1 &:= \gamma_{123}\inv  \gamma_1 \gamma_{23}, &\qquad
    x_2 &:= \gamma_{23}\inv \gamma_2 \gamma_3, \\
    x_3 &:= \gamma_{123}\inv \gamma_{12}\gamma_3, &\qquad
    x_4 &:= \gamma_3\inv\gamma_{12}\inv\gamma_1 \gamma_2 \gamma_3.
    \end{alignat*}
    We note that each $x_i \in \agpd_{\rho(\chi)}$ because $q(\gamma_{ij}) = q(\gamma_i \gamma_j)$ for $i,j \in \{1,2,3\}$ and $s(x_i) = s(\gamma_3) = \rho(\chi)$.
    
    Equation~\eqref{eq:cocycle-claim2} thus becomes
    \begin{align*}
    \chi(x_{1})\, F_{1}(x_{1}) \ 
    \chi(x_{2})\, F_{2}(x_{2}) \
        \overset{!}{\propto} \
    \chi(x_{3})\, F_{3}(x_{3}) \ 
    \chi(x_{4})\, F_{4}(x_{4}).
    \end{align*}    
    Using the
    fact that each $\chi\in\specB_{u}$ is a projective representation by assumption,
    we compute
    \begin{align*}
    \chi(x_{1})\,
    \chi(x_{2})
    =\chi(x_{1}x_{2})\, c(x_{1},x_{2})
    \text{ and }
    \chi(x_{3})\, \chi(x_{4})
    =
    \chi(x_{3}x_{4})\, c(x_{3},x_{4}).
    \end{align*}
    Note that $x_{1}x_{2}=
    x_{3}x_{4}$, so Equation \eqref{eq:cocycle-claim2} can be rewritten to
    \begin{align}\label{eq:claim3}
    c(x_{1},x_{2})\, F_{1}(x_{1}) \, F_{2}(x_{2}) \
        \overset{!}{\propto} \
    c(x_{3},x_{4})\, F_{3}(x_{3}) \, F_{4}(x_{4}).
    \end{align}    
    Using the formula for the convolution, we compute:
    \begin{align*}
        F_{1} (x_{1})
        =& 
        \uwave{\overline{f_{123} (\gamma_{123})}\,
        \overline{c(\gamma_{123}\inv,\gamma_{123})}}\,
        \uwave{f_{1} (\gamma_{1})} \,
        c(\gamma_{123}\inv, \gamma_{1})\, 
        \dotuline{f_{23}(\gamma_{23})}\,
        c(x_{1}\gamma_{23}\inv,\gamma_{23})
        \\
        F_{2}(x_{2})
        =&
        \dotuline{\overline{f_{23} (\gamma_{23})}}\,
        \overline{c(\gamma_{23}\inv,\gamma_{23})}\,
        \uwave{f_{2} (\gamma_{2})} \,
        c(\gamma_{23}\inv, \gamma_{2})\, 
        \uwave{f_{3}(\gamma_{3})}\,
        c(x_{2}\gamma_{3}\inv,\gamma_{3})
        \\
        F_{3}(x_{3})
        =&
        \uwave{\overline{f_{123} (\gamma_{123})}\,
        \overline{c(\gamma_{123}\inv,\gamma_{123})}}\,
        \dotuline{f_{12} (\gamma_{12})} \,
        c(\gamma_{123}\inv, \gamma_{12})\, \uwave{f_{3}(\gamma_{3})}\,
        c(x_{3}\gamma_{3}\inv,\gamma_{3})
        \\
        F_{4} (x_{4})
        =&
        \dotuline{\overline{f_{3}(\gamma_{3})}}\,
        \dotuline{\overline{f_{12}(\gamma_{12})}}\,
        \uwave{f_{1}(\gamma_{1})}\,
        \uwave{f_{2}(\gamma_{2})}\,
        \dotuline{f_{3}(\gamma_{3})}\,
        \overline{c(\gamma_{3}\inv,\gamma_{3})}\,
        \overline{c(\gamma_{12}\inv,\gamma_{12})}\\
        &
        c(\gamma_{3}\inv, \gamma_{12}\inv)\,
        c(x_{4}\gamma_{3}\inv\gamma_{2}\inv\gamma_{1}\inv,\gamma_{1})\,
        c(x_{4}\gamma_{3}\inv\gamma_{2}\inv,\gamma_{2})\,
        c(x_{4}\gamma_{3}\inv, \gamma_{3})
    \end{align*}
    Note here that the \uwave{wave-underlined factors} cancel on both sides of the alleged Equation~\eqref{eq:claim3} and the \dotuline{dot-underlined factors} amount to a positive factor (which is irrelevant to determine $\propto$).
    Thus, it suffices    to show that
    \begin{align}\begin{split}\label{eq:main-cocycle-claim-for-C}
    &c(x_{1},x_{2})\,
        c(\gamma_{123}\inv, \gamma_{1})\,         c(x_{1}\gamma_{23}\inv,\gamma_{23}) \, 
        \overline{c(\gamma_{23}\inv,\gamma_{23})}\,
        c(\gamma_{23}\inv, \gamma_{2})\,
        c(x_{2}\gamma_{3}\inv,\gamma_{3})
        \\
        \overset{!}{=}\, &
    c(x_{3},x_{4})\,
         c(\gamma_{123}\inv, \gamma_{12})\,
         c(x_{3}\gamma_{3}\inv, \gamma_{3}) \, 
        \overline{c(\gamma_{3}\inv,\gamma_{3})}\,
        \overline{c(\gamma_{12}\inv,\gamma_{12})}\,
        c(\gamma_{3}\inv, \gamma_{12}\inv)\\
        &c(\gamma_{3}\inv\gamma_{12}\inv,\gamma_{1})
        \,c(x_{4}\gamma_{3}\inv\gamma_{2}\inv,\gamma_{2})
        \, c(x_{4}\gamma_{3}\inv, \gamma_{3}).
    \end{split}\end{align}
    The proof of Equation~\eqref{eq:main-cocycle-claim-for-C} is a long exercise in cocycle computations and can be found in the Appendix, Lemma~\ref{lem:main-cocycle-claim-for-C}.
\end{proof}

\begin{lemma}\label{lem:C-cts}
     If $\qsec $ is a continuous section, then the 2-cocycle $C^{\qsec }$ from Proposition~\ref{prop:def:cocyle-for-Weyl} is continuous.
\end{lemma}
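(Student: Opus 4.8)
The plan is to remove the arbitrary choices $f_{\dot{\gamma}}$ from the defining formula by simplifying $C^{\qsec}$ to an explicit closed form, and then to read off continuity directly. First I would fix a composable pair $h_{1}=(\dot{\gamma}_{1},\tilde{\alpha}_{\dot{\gamma}_{2}}(\chi))$, $h_{2}=(\dot{\gamma}_{2},\chi)$, abbreviate $\gamma_{i}:=\qsec(\dot{\gamma}_{i})$ and $\gamma_{12}:=\qsec(q(\gamma_{1}\gamma_{2}))$, and set $F:=f_{q(\gamma_{1}\gamma_{2})}^{*}\,f_{\dot{\gamma}_{1}}\,f_{\dot{\gamma}_{2}}$. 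As already established in the proof of Proposition~\ref{prop:def:cocyle-for-Weyl}, $F\in N$ is supported on a bisection and $\{a\}=\supp'(F)\cap\agpd_{\rho(\chi)}$ with $a=\gamma_{12}\inv\gamma_{1}\gamma_{2}$; hence, by the definitions of $\phi_{\chi}$ and $\Phi$, the numerator of $C^{\qsec}(h_{1},h_{2})$ collapses to the single term $\phi_{\chi}(\Phi(F))=\chi(a)\,F(a)$.

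Next I would expand $F(a)$ via the twisted-convolution formula, exactly as in the computation of $F_{1}(x_{1})$ in the preceding lemma, obtaining
\[ F(a)=f_{q(\gamma_{1}\gamma_{2})}(\gamma_{12})\,f_{\dot{\gamma}_{1}}(\gamma_{1})\,f_{\dot{\gamma}_{2}}(\gamma_{2})\ \overline{c(\gamma_{12}\inv,\gamma_{12})}\,c(\gamma_{12}\inv,\gamma_{1})\,c(\gamma_{12}\inv\gamma_{1},\gamma_{2}), \]
where the three leading factors are strictly positive reals by the defining property $f_{\eta}(\qsec(\dot{\eta}))>0$ of the chosen normalizers. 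The crucial observation is then that dividing $\chi(a)F(a)$ by its modulus (using $\abs{\chi(a)}=1$) cancels all of these choice-dependent positive scalars, leaving the explicit expression
\[ C^{\qsec}(h_{1},h_{2})=\chi(\gamma_{12}\inv\gamma_{1}\gamma_{2})\ \overline{c(\gamma_{12}\inv,\gamma_{12})}\,c(\gamma_{12}\inv,\gamma_{1})\,c(\gamma_{12}\inv\gamma_{1},\gamma_{2}), \]
which holds pointwise on $(\qgpd\ltimes\specB)\2$ and depends only on $c$, $\qsec$, and $\chi$, never on the selection $\dot{\gamma}\mapsto f_{\dot{\gamma}}$.

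With this formula in hand, continuity follows by taking a net of composable data converging to $(h_{1},h_{2})$, equivalently $\dot{\gamma}_{i}^{\lambda}\to\dot{\gamma}_{i}$ in $\qgpd$ and $\chi^{\lambda}\to\chi$ in $\specB$, and checking that each factor converges. Since $\qsec$ is continuous by hypothesis and multiplication, inversion, and $q$ are continuous, the section points satisfy $\gamma_{i}^{\lambda}\to\gamma_{i}$ and $\gamma_{12}^{\lambda}\to\gamma_{12}$ in $\gpd$, so $a^{\lambda}:=(\gamma_{12}^{\lambda})\inv\gamma_{1}^{\lambda}\gamma_{2}^{\lambda}\to a$ in $\agpd$; continuity of $c$ then handles the three cocycle factors. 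For the remaining factor I would invoke the description of the topology on $\specB$: since $a^{\lambda}\to a$ with $p(a^{\lambda})=\rho(\chi^{\lambda})$ and $p(a)=\rho(\chi)$, Condition~\ref{item2:convergence-in-specB} of Proposition~\ref{shortened-prop:mcB-cong-widehatB} gives $\chi^{\lambda}(a^{\lambda})\to\chi(a)$. Multiplying the convergent factors yields $C^{\qsec}(h_{1}^{\lambda},h_{2}^{\lambda})\to C^{\qsec}(h_{1},h_{2})$.

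The main obstacle to anticipate is precisely that the assignment $\dot{\gamma}\mapsto f_{\dot{\gamma}}$ is an arbitrary selection which need not vary continuously, so a priori the numerator $\phi_{\chi}(\Phi(F))$ has no reason to be continuous. The resolution, and the heart of the argument, is that the normalization $z\mapsto z/\abs{z}$ deletes exactly the choice-dependent positive scalars $f_{q(\gamma_{1}\gamma_{2})}(\gamma_{12})\,f_{\dot{\gamma}_{1}}(\gamma_{1})\,f_{\dot{\gamma}_{2}}(\gamma_{2})$, reducing $C^{\qsec}$ to the manifestly continuous closed form above. A secondary point worth care is that the last factor $\chi^{\lambda}(a^{\lambda})$ must be controlled through Proposition~\ref{shortened-prop:mcB-cong-widehatB} rather than any naive pointwise continuity statement, since both $\chi$ and its argument $a$ move with the net.
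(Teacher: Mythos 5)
Your proposal is correct and follows essentially the same route as the paper's proof: both arguments evaluate $\phi_\chi(\Phi(F)) = \chi(a)\,F(a)$ at the unique point $a$ of $\supp'(F)\cap\agpd_{\rho(\chi)}$, observe that the normalization $z\mapsto z/\abs{z}$ deletes exactly the choice-dependent positive factors $f_{q(\gamma_1\gamma_2)}(\gamma_{12})\,f_{\dot{\gamma}_1}(\gamma_1)\,f_{\dot{\gamma}_2}(\gamma_2)$, and then obtain convergence of the surviving factors from continuity of $c$ and $\qsec$ together with the description of the topology on $\specB$ (your appeal to Condition~\ref{item2:convergence-in-specB} of Proposition~\ref{shortened-prop:mcB-cong-widehatB} is precisely the step the paper phrases as $\chi_i(\tau_i\inv a_i\inv\tau_i)\to\chi(\tau\inv a\inv\tau)$). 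The only difference is organizational: you extract the choice-free closed form for $C^{\qsec}$ once and for all before taking limits, whereas the paper carries out the identical computation directly along the convergent net.
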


\begin{proof}
    Let $C:= C^{\qsec }.$
    Suppose we are given a convergent net in $ (\qgpd  \ltimes \specB  )\2$, say
    \[
        (x_{i}, y_{i})
        :=
        \Bigl(
            \bigl(\dot{\gamma}_{i} , \tilde{\alpha}_{\dot{ \tau}_{i}} (\chi_{i})\bigr),
            (\dot{ \tau}_{i} , \chi_{i})
        \Bigr)
        \overset{i}{\longrightarrow}
        \Bigl(
            \bigl(\dot{\gamma} , \tilde{\alpha}_{\dot{ \tau}} (\chi)\bigr),
            (\dot{ \tau} , \chi )
        \Bigr)
        :=
        (x, y),
        \,i\in\Lambda.
    \]
    
    The assumption that $\lim_{i} (x_{i},y_{i})= (x,y)$ means that
    \[
        \lim_{i} \dot{\gamma}_{i} = \dot{\gamma}
        \text{ and }
        \lim_{i} \dot{ \tau}_{i} = \dot{ \tau}
        \text{ in }
        \qgpd , \text{ and }
        \lim_{i} \chi_{i} = \chi
        \text{ in } \specB .
    \]
    Recall that the latter implies that $u_{i} := \rho(\chi_{i})$ converges to  $\rho(\chi)=:u$.
    
    Let $ \gamma := \qsec (\dot{\gamma}), \tau:= \qsec (\dot{\tau})$ be the representative of $\dot{\gamma}$, resp.\
    $\dot{ \tau},$ in $\supp' (f_{\dot{\gamma}}),$ resp.\
    $\supp' (f_{\dot{ \tau}})$. Note that there exists a (unique) $a\in\agpd_{r(\tau)}$ with $\gamma a \tau = \qsec (q(\gamma\tau))\in \supp'(f_{q(\gamma\tau)}).$
    
    Similarly, for each $i$, let $\gamma_{i}:=\qsec (\dot{\gamma}_{i}) ,  \tau_{i}:=\qsec (\dot{\tau}_{i})$ be the representative of $\dot{\gamma}_{i},$ resp.\
    $\dot{ \tau}_{i},$ in $\supp' (f_{\dot{\gamma}_{i}}),$ resp.\
    $\supp' (f_{\dot{ \tau}_{i}})$. Again, for each $i$, there exists a (unique) $a_{i} \in\agpd_{r(\tau_{i})}$ such that $\gamma_{i} a_{i} \tau_{i}= \qsec (q(\gamma_{i}\tau_{i}))\in \supp'(f_{q(\gamma_{i}\tau_{i})}).$

    Since $\dot{\gamma}_{i}\overset{i}{\to}\dot{\gamma}$, continuity of $\qsec $ implies $\gamma_{i} = \qsec (\dot{\gamma}_i)\overset{i}{\to}\qsec (\dot{\gamma})=\gamma.$ Similarly, $\dot{ \tau}_{i} \overset{i}{\to} \dot{ \tau}$ implies $\tau_i\overset{i}{\to}\tau$. Since $\supp'(f_{\dot{\gamma}})$ and $\supp'(f_{\dot{\tau}})$ are open neighborhoods of $\gamma,$ resp.\ $\tau$, there thus exists $i_{1}$ so that for all $i\geq i_{1}$, we have $\gamma_{i} \in \supp'(f_{\dot{\gamma}})$ and $\tau_{i}\in \supp'(f_{\dot{ \tau}})$.
    Since $q(\gamma_{i} \tau_{i})=\dot{\gamma}_{i}\dot{ \tau}_{i} \overset{i}{\to} \dot{\gamma}\dot{ \tau}=q(\gamma\tau)$ and $\qsec $ is continuous, we also have 
    \[
        \gamma_i a_i \tau_i = \qsec (q(\gamma_{i} \tau_{i})) \overset{i}{\to} \qsec (q(\gamma\tau)) = \gamma a \tau,
    \]
    so there exists $i_{2}$
    so that for all $i\geq i_{2}$, we have
    $ \gamma_{i} a_{i} \tau_{i} \in \supp'(f_{q(\gamma\tau)})$.
    
    In order to show that $\lim_{i} C(x_{i},y_{i})=C(x,y)$, consider $F_{i} := f_{q(\gamma_{i}   \tau_{i})}^{*} f_{\dot{\gamma}_{i}}f_{\dot{ \tau}_{i}}$ 
    and recall that $C(x_i, y_i) = \frac{\phi_{\chi_i}(\Phi(F_i))}{|\phi_{\chi_i}(\Phi(F_i))|}$.
    The fact that $F_i$ is a product of elements of $N$ and hence is supported in a bisection means that $\agpd_{u_{i}} \cap\supp'(F_{i})$ consists of the single point
    \[
        \tau_{i}\inv a_{i}\inv \tau_{i} =
        (\gamma_{i} a_{i} \tau_{i} )\inv\, \gamma_{i} \,  \tau_{i}
        \in
        \supp' (f_{q(\gamma_{i}   \tau_{i})})\inv
        \cdot
        \supp'(f_{\dot{\gamma}_{i}})
        \cdot
        \supp' (f_{\dot{ \tau}_{i}})
        ,
    \]
    so that
    \begin{align*}
        \phi_{\chi_{i}} \left( \Phi(F_{i})\right)
        =
        \chi_{i} ( \tau_{i}\inv a_{i}\inv \tau_{i} )\,F_{i} ( \tau_{i}\inv a_{i}\inv \tau_{i})  .
    \end{align*}
    We compute (cf.~the proof of Proposition~\ref{prop:def:cocyle-for-Weyl}) that 
    \begin{align*}
        F_{i}(\tau_{i}\inv a_{i}\inv \tau_{i})
        =&
        \overline{f_{q(\gamma_{i}   \tau_{i})} (\gamma_{i} a_{i} \tau_{i})}\,
        \overline{c((\gamma_{i} a_{i} \tau_{i})\inv,\gamma_{i} a_{i} \tau_{i})}\\
        &f_{\dot{\gamma}_{i}} (\gamma_{i})\,
        c((\gamma_{i} a_{i} \tau_{i})\inv, \gamma_{i})\, 
        f_{\dot{\tau}_{i}}(\tau_{i})\,
        c(\tau_{i}\inv a_{i}\inv,\tau_{i}).
    \end{align*}
    In particular, since we assumed that $f_{\dot{\kappa}}(\qsec (\dot{\kappa}))> 0$ for any $\dot{\kappa}\in\qgpd $,
    we have
    \begin{align*}
        C(x_{i}, y_{i})
        =&
        \overline{c((\gamma_{i} a_{i} \tau_{i})\inv,\gamma_{i} a_{i} \tau_{i})}\,
        c((\gamma_{i} a_{i} \tau_{i})\inv, \gamma_{i})\, 
        c(\tau_{i}\inv a_{i}\inv,\tau_{i}) \,
        \chi_{i} ( \tau_{i}\inv a_{i}\inv \tau_{i} ) 
        .
    \end{align*}
    As $\gamma_{i}\to\gamma$, $\tau_{i}\to \tau$, and $\gamma_{i} a_{i} \tau_{i}  \to  \gamma a \tau$ (all by continuity of $\qsec $), we have that $a_{i}\to a$. Since $\chi_{i}\to \chi$, it follows that
    \[
        \chi_{i} ( \tau_{i}\inv a_{i}\inv \tau_{i} )
        \to
        \chi ( \tau\inv a\inv \tau).
    \]
    Since $c$ is continuous, we conclude that
    \[
        C(x_{i},y_{i})
        \to
        \overline{c((\gamma a \tau)\inv,\gamma a \tau)}\,
        c((\gamma a \tau)\inv, \gamma)\, 
        c(\tau\inv a\inv,\tau) \,
        \chi ( \tau\inv a\inv \tau).
    \]
    Note that the right-hand side is, by the same argument as for $C(x_{i},y_{i})$, exactly $C(x,y).$ This concludes our proof.
\end{proof}

\pagebreak[3]\section{Computing the Weyl twist}
 Our standing assumptions   can be found on page~\pageref{assumption:Sigma}. 
 In the main result of this section (Theorem~\ref{thm:psi}), we prove that every section $\qsec $ of the quotient map $q\colon  \gpd \to \agpd$ induces a $\mbb{T}$-equivariant isomorphism of the (discrete) groupoids 
$(\qgpd  \ltimes \specB ) \times_{C^\qsec } \mbb{T}$ and $\Sigma_{(A,B)}$. Moreover, if $\qsec $ is continuous, then the isomorphism is a homeomorphism, meaning we have explicitly identified not only the Weyl groupoid but also the Weyl twist of the Cartan pair $(A,B)$, see Corollary~\ref{cor:cartan}.

We begin by recalling the classical construction of a twist $G \times_\sigma \mathbb T$ from a 2-cocycle $\sigma$ on a groupoid $G$ (cf.~\cite[Chapter I, Proposition 1.14]{renault} or \cite{DGNRW:Cartan}). As a set, we have $G \times_\sigma \mathbb T = G \times \mathbb  T$; we set $(G\times_\sigma \mathbb T)\2 = \{ (g, \lambda), (h, \mu): (g,h) \in G\2\}$.  The multiplication is given by $(g, \lambda) (h, \mu) = (gh, \sigma(g,h) \lambda \mu).$ When $\sigma$ is continuous, the product topology on $G\times_\sigma \mathbb{T}$ makes it a topological groupoid. Note that there is a continuous action of $\mathbb  T$ on $G \times_\sigma \mathbb T$, given by $\lambda \cdot (g, \mu) = (g, \lambda \mu)$, and $(G \times_\sigma \mathbb T) / \mathbb T \cong G.$ Similarly (cf.~\cite[Section 3]{Kumjian:diag}), the Weyl twist $\Sigma_{(A, B)}$ always admits  an action of $\mathbb T$, given by $\lambda \cdot \llbracket \alpha_n(\phi_\chi), n, \phi_\chi\rrbracket = \llbracket \alpha_n(\phi_\chi), \lambda n, \phi_\chi\rrbracket,$ so that $\Sigma_{(A, B)}/\mathbb T = \gpd_{(A,B)}.$  

\begin{thm}\label{thm:psi}
    As in Proposition~\ref{prop:def:cocyle-for-Weyl}, let $\qsec \colon\qgpd \to\gpd $ be a section for  $q\colon\gpd \to\qgpd $ and for each $\dot{\gamma}\in \qgpd $, choose one $f_{\dot{\gamma}}\in N$ such that $f_{\dot{\gamma}}(\qsec (\dot{\gamma})) >0$.
    Define
    \[
        \psi_{\qsec }\colon (\qgpd  \ltimes \specB ) \times_{C^\qsec } \mbb{T} \to \Sigma_{(A,B)},
        \quad
        \psi_{\qsec} (\dot{\gamma},\chi, \lambda) = \llbracket \phi(\tilde{\alpha}_{\dot{\gamma}}(\chi)), \lambda f_{\dot{\gamma}}, \phi_{\chi}\rrbracket.
    \]
     Then the map $\psi_{\qsec }$ is a $\mbb{T}$-equivariant isomorphism of  \textup(discrete\textup) groupoids 
     which does not depend on the choice of $f_{\dot{\gamma}}$. 

    Moreover, if $\qsec $ is a continuous section, then $\psi_{\qsec }$ is a 
    homeomorphism.
\end{thm}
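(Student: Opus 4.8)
The plan is to reduce everything to a single transition lemma comparing two representatives of the same point of the Weyl twist, and then treat the algebraic and topological assertions separately. The transition lemma I would prove first is: if $f_1,f_2\in N$ satisfy $\phi_\chi\in\dom(f_1)\cap\dom(f_2)$ and $[\alpha_{f_1}(\phi_\chi),f_1,\phi_\chi]=[\alpha_{f_2}(\phi_\chi),f_2,\phi_\chi]$ in $\mc{G}_{(A,B)}$, then in $\Sigma_{(A,B)}$
\[
\llbracket\alpha_{f_1}(\phi_\chi),f_1,\phi_\chi\rrbracket
=\frac{\phi_\chi(\Phi(f_2^{*}f_1))}{\abs{\phi_\chi(\Phi(f_2^{*}f_1))}}\;\llbracket\alpha_{f_2}(\phi_\chi),f_2,\phi_\chi\rrbracket.
\]
Since $\Sigma_{(A,B)}/\mbb{T}=\mc{G}_{(A,B)}$, the two brackets differ by a unique $\mu\in\mbb{T}$, which by definition of $\approx$ means $f_1b=\mu f_2b'$ for some $b,b'\in B$ with $\phi_\chi(b),\phi_\chi(b')>0$. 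Multiplying on the left by $f_2^{*}$, applying the $B$-linear expectation $\Phi$ and then the character $\phi_\chi$, and using $\phi_\chi(f_2^{*}f_2)>0$ isolates $\mu$ as the displayed ratio, which is nonzero by Lemma~\ref{lem:3-implies-2}\ref{item:Weyl-gpd-implies-points}.

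The algebraic assertions then follow quickly. For well-definedness, two admissible choices $f_{\dot\gamma},f'_{\dot\gamma}$ both represent $\varphi(\dot\gamma,\chi)$, and since $(f'_{\dot\gamma})^{*}f_{\dot\gamma}$ is supported on a bisection whose unique element over $\rho(\chi)$ is the unit $\rho(\chi)$ itself, the transition ratio evaluates to $f'_{\dot\gamma}(\qsec(\dot\gamma))\,f_{\dot\gamma}(\qsec(\dot\gamma))>0$, so the two brackets coincide. $\mbb{T}$-equivariance is immediate from $\lambda'\cdot(\dot\gamma,\chi,\lambda)=(\dot\gamma,\chi,\lambda'\lambda)$ and the definition of the $\mbb{T}$-action on $\Sigma_{(A,B)}$. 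For the homomorphism property I would multiply $\psi_{\qsec}$ of a composable pair using $\llbracket\alpha_n(\alpha_m(x)),n,\alpha_m(x)\rrbracket\,\llbracket\alpha_m(x),m,x\rrbracket=\llbracket\alpha_{nm}(x),nm,x\rrbracket$, arriving at $\llbracket\phi(\tilde\alpha_{\dot\gamma_1\dot\gamma_2}(\chi)),\lambda_1\lambda_2 f_{\dot\gamma_1}f_{\dot\gamma_2},\phi_\chi\rrbracket$; comparing with $\psi_{\qsec}$ of the product $(\dot\gamma_1\dot\gamma_2,\chi,C^{\qsec}(\cdots)\lambda_1\lambda_2)$ and applying the transition lemma with $f_1=f_{\dot\gamma_1}f_{\dot\gamma_2}$ and $f_2=f_{q(\gamma_1\gamma_2)}$ produces exactly the factor $C^{\qsec}\bigl((\dot\gamma_1,\tilde\alpha_{\dot\gamma_2}(\chi)),(\dot\gamma_2,\chi)\bigr)$ of Proposition~\ref{prop:def:cocyle-for-Weyl}, so the two sides agree. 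Bijectivity is then formal: $\psi_{\qsec}$ is a $\mbb{T}$-equivariant map sitting in a square over the bijection $\varphi$ (Theorem~\ref{thm:varphi}), both vertical maps being quotients by free $\mbb{T}$-actions whose orbits are exactly the fibers, and such a map is automatically a bijection.

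For the topological claim, assume $\qsec$ continuous, so $C^{\qsec}$ is continuous (Lemma~\ref{lem:C-cts}) and the domain carries the product topology. By $\mbb{T}$-equivariance and continuity of the two $\mbb{T}$-actions, continuity of $\psi_{\qsec}$ reduces to continuity of $(\dot\gamma,\chi)\mapsto\llbracket\phi(\tilde\alpha_{\dot\gamma}(\chi)),f_{\dot\gamma},\phi_\chi\rrbracket$. Given $(\dot\gamma_i,\chi_i)\to(\dot\gamma,\chi)$, for large $i$ the point $\qsec(\dot\gamma_i)$ lies in $\supp'(g)$ with $g:=f_{\dot\gamma}$, so the transition lemma rewrites $\psi_{\qsec}(\dot\gamma_i,\chi_i,1)=\nu_i\,\llbracket\alpha_g(\phi_{\chi_i}),g,\phi_{\chi_i}\rrbracket$ with $\nu_i=\phi_{\chi_i}(\Phi(g^{*}f_{\dot\gamma_i}))/\abs{\phi_{\chi_i}(\Phi(g^{*}f_{\dot\gamma_i}))}$. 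Since $\phi_\mu\mapsto\llbracket\alpha_g(\phi_\mu),g,\phi_\mu\rrbracket$ is continuous on $\dom(g)$ and $\phi_{\chi_i}\to\phi_\chi$ by Lemma~\ref{lem:phi-continuous}, the whole task becomes the main obstacle: showing $\nu_i\to1$. I would evaluate $\phi_{\chi_i}(\Phi(g^{*}f_{\dot\gamma_i}))$ at the unique point $\beta_i^{-1}\qsec(\dot\gamma_i)$ of $\agpd_{\rho(\chi_i)}$ in the support of the bisection-supported function $g^{*}f_{\dot\gamma_i}$, where $\beta_i$ is the representative of $\dot\gamma_i$ in $\supp'(g)$, and run the same explicit cocycle computation as in Lemma~\ref{lem:C-cts}: continuity of $\qsec$ gives $\beta_i\to\qsec(\dot\gamma)$ and hence $\beta_i^{-1}\qsec(\dot\gamma_i)\to\rho(\chi)$, so the phase of $\phi_{\chi_i}(\Phi(g^{*}f_{\dot\gamma_i}))$ converges to the phase of the positive real number $\phi_\chi(\Phi(g^{*}g))$, giving $\nu_i\to1$.

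Finally, for continuity of $\psi_{\qsec}^{-1}$ I would start from a convergent net $\psi_{\qsec}(\dot\gamma_i,\chi_i,\lambda_i)\to\psi_{\qsec}(\dot\gamma,\chi,\lambda)$ and push it down the quotient $\Sigma_{(A,B)}\to\mc{G}_{(A,B)}$; the homeomorphism $\varphi$ then yields $(\dot\gamma_i,\chi_i)\to(\dot\gamma,\chi)$. Comparing this with the net $\psi_{\qsec}(\dot\gamma_i,\chi_i,\lambda)$, which converges to $\psi_{\qsec}(\dot\gamma,\chi,\lambda)$ by the continuity just established, inside a local trivialization of the principal $\mbb{T}$-bundle $\Sigma_{(A,B)}\to\mc{G}_{(A,B)}$ forces $\lambda_i\to\lambda$, whence $(\dot\gamma_i,\chi_i,\lambda_i)\to(\dot\gamma,\chi,\lambda)$. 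This completes the proof that $\psi_{\qsec}$ is a homeomorphism.
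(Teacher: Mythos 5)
Your proposal is correct, and while its computational heart coincides with the paper's, several of its structural steps take a genuinely different route. Your transition lemma is exactly the paper's Proposition~\ref{prop:Weyl-twist-elt-description} (specialized to $f_1,f_2\in N$), but you prove it differently: the paper reruns the explicit construction of $b_1,b_2$ from Proposition~\ref{prop:Weyl-gpd-similar-to-twist} to exhibit the phase, whereas you invoke the fact that $\Sigma_{(A,B)}/\mbb{T}=\mc{G}_{(A,B)}$ with free $\mbb{T}$-action to get a unique fiber-translate $\mu$ and then isolate it algebraically via $\Phi$ and $\phi_\chi$ — legitimate here, since the paper states this quotient fact as background. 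The well-definedness and homomorphism steps then agree with the paper's Lemmata~\ref{lem:psi-surjective} and~\ref{lem:psi-T-equiv} (your evaluation of $\phi_\chi(\Phi((f')^{*}f))=\overline{f'(\qsec(\dot\gamma))}\,f(\qsec(\dot\gamma))$ is Corollary~\ref{cor:explicit-lambda}, and your identification of the transition phase with $C^{\qsec}$ is the paper's computation verbatim). The real divergence is in bijectivity and in continuity of the inverse: the paper proves surjectivity and injectivity by hand (Proposition~\ref{prop:Weyl-twist-elt-description}, then Lemma~\ref{lem:3-implies-2} plus a positivity argument for Lemma~\ref{lem:top-prin=>psi-injective}), and proves openness by a direct net argument in the basic open sets $\mf{U}(U,V,f)$; you instead run formal principal-bundle arguments over the bijection/homeomorphism $\varphi$ of Theorem~\ref{thm:varphi} — an equivariant map covering a bijection between free $\mbb{T}$-quotients is a bijection, and pushing a convergent net down to $\mc{G}_{(A,B)}$ plus a local trivialization forces $\lambda_i\to\lambda$. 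This buys brevity and makes the logical dependence on Theorem~\ref{thm:varphi} transparent, but its cost is a heavier reliance on standard-but-here-unproved facts about the Weyl twist as a topological $\mbb{T}$-bundle: freeness and fiber-transitivity of the action, continuity of the projection $\Sigma_{(A,B)}\to\mc{G}_{(A,B)}$ and of the action, continuity of the local sections $x\mapsto\llbracket\alpha_g(x),g,x\rrbracket$, and local triviality. All of these are true and recoverable from Renault's construction (or from your own transition lemma together with weak-$*$ continuity of $x\mapsto x(\Phi(f^{*}g))$, which is essentially how the paper's $\kappa_i$-argument proceeds), but a referee would ask you to either cite them precisely or prove them, which is what the paper's hands-on lemmata effectively do. Your phase-convergence argument $\nu_i\to 1$ in the continuity step is the same computation as the paper's $\overline{f(\gamma_i)}/\abs{f(\gamma_i)}\to 1$ in Lemma~\ref{lem:csec-cts=>psi-cts}, including its essential use of continuity of $\qsec$.
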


\begin{cor}
\label{cor:cartan}
    If $\qsec $ is continuous, then the Cartan pair $(C_r^{*}(\gpd ,c), C_r^{*}(\agpd ,c))$ is isomorphic to the pair $\bigl(C_r^{*}((\qgpd \ltimes\specB )\times_{C^\qsec }\mbb{T}),C_0(
    \specB )\bigr)
    $.
\end{cor}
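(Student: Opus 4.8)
The plan is to derive Corollary~\ref{cor:cartan} by combining the two main theorems of the paper with Renault's reconstruction theorem for Cartan pairs. Recall from \cite[Theorem 5.9]{renault-cartan}, as quoted in Section~\ref{sec:prelim}, that the Weyl construction produces an isomorphism of Cartan pairs
\[
    (A, B) = \bigl(C_r^*(\gpd, c), C_r^*(\agpd, c)\bigr) \cong \bigl(C_r^*(\mc{G}_{(A,B)}, \Sigma_{(A,B)}), C_0(\mc{G}_{(A,B)}\z)\bigr).
\]
So it suffices to identify the right-hand pair with $\bigl(C_r^*((\qgpd \ltimes \specB) \times_{C^\qsec} \mbb{T}), C_0(\specB)\bigr)$, and the content of the corollary is that Theorems~\ref{thm:varphi} and~\ref{thm:psi} accomplish exactly this identification.

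First I would check that $\varphi$ and $\psi_\qsec$ assemble into an isomorphism of twists. By Theorem~\ref{thm:psi}, when $\qsec$ is continuous the map $\psi_\qsec$ is a $\mbb{T}$-equivariant homeomorphism and groupoid isomorphism from $(\qgpd \ltimes \specB) \times_{C^\qsec} \mbb{T}$ onto $\Sigma_{(A,B)}$. Comparing the defining formulas shows that $\psi_\qsec$ covers $\varphi$: applying the respective $\mbb{T}$-quotient maps sends $(\dot{\gamma}, \chi, \lambda)$ to $[\phi_{\tilde{\alpha}_{\dot{\gamma}}(\chi)}, f_{\dot{\gamma}}, \phi_\chi] = \varphi(\dot{\gamma}, \chi)$, the scalar $\lambda$ being invisible to the Weyl groupoid. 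Since $\varphi$ is itself an isomorphism of topological groupoids by Theorem~\ref{thm:varphi}, the pair $(\psi_\qsec, \varphi)$ is an isomorphism of twists covering an isomorphism of their base groupoids.

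Next I would invoke the functoriality of the reduced twisted groupoid $C^*$-algebra: an isomorphism of twists covering a groupoid isomorphism induces an isomorphism of the associated reduced twisted $C^*$-algebras that carries the diagonal subalgebra $C_0$ of one unit space onto that of the other. Applying this to $(\psi_\qsec, \varphi)$ yields
\[
    C_r^*(\mc{G}_{(A,B)}, \Sigma_{(A,B)}) \cong C_r^*\bigl((\qgpd \ltimes \specB) \times_{C^\qsec} \mbb{T}\bigr),
\]
sending $C_0(\mc{G}_{(A,B)}\z)$ onto $C_0((\qgpd \ltimes \specB)\z)$. Since $\varphi$ restricts to a homeomorphism of unit spaces and $(\qgpd \ltimes \specB)\z$ is identified with $\specB$ via $(\rho(\chi), \chi) \mapsto \chi$ (as recorded after Proposition~\ref{prop:tilde-alpha-is-action}), this last algebra is $C_0(\specB)$, in agreement with Corollary~\ref{cor:specB-is-spectrum}. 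Chaining these isomorphisms with the one from \cite[Theorem 5.9]{renault-cartan} proves the corollary.

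The hard part will be the functoriality step. One must verify that a $\mbb{T}$-equivariant homeomorphism of twists covering a groupoid isomorphism induces, at the level of compactly supported sections, a $*$-isomorphism for the twisted convolution products that is compatible with each regular representation $\pi_u$, so that it descends to the reduced completions, and that it sends sections supported over the unit space to sections supported over the unit space so as to respect the diagonals. This is standard, amounting essentially to transporting the line-bundle data along the twist isomorphism, but it is precisely where the continuity of $\qsec$ --- which upgrades $\psi_\qsec$ from an algebraic isomorphism to a homeomorphism --- is genuinely needed.
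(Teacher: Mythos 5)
Your proposal takes essentially the same route as the paper's proof: the paper likewise deduces the corollary by combining Theorem~\ref{thm:varphi} (identifying the Weyl groupoid with $\qgpd\ltimes\specB$), Theorem~\ref{thm:psi} (identifying the Weyl twist with $(\qgpd\ltimes\specB)\times_{C^\qsec}\mbb{T}$), and \cite[Theorem 5.9]{renault-cartan}. The only difference is one of detail: the paper leaves implicit the two points you spell out, namely that $\psi_{\qsec}$ covers $\varphi$ (the scalar $\lambda$ being invisible to $\alpha_{\lambda f}=\alpha_f$) and that a twist isomorphism covering a groupoid isomorphism induces a diagonal-preserving isomorphism of the reduced twisted $C^{*}$-algebras.
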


\begin{proof}
Since the Weyl groupoid is isomorphic, as a topological groupoid, to $\qgpd \ltimes\specB $ by Theorem~\ref{thm:varphi}, and the Weyl twist is also topologically isomorphic to $(\qgpd \ltimes\specB )\times_{C^\qsec }\mbb{T}$ by Theorem~\ref{thm:psi},  \cite[Theorem 5.9]{renault-cartan} implies that $C_r^{*}(\gpd ,c)$ is isomorphic to $C_r^{*}(\qgpd \ltimes \specB ,(\qgpd \ltimes \specB )\times_{C^\qsec } \mbb{T})$, and that isomorphism carries $C_r^{*}(\agpd ,c)$ onto $C_0((\qgpd  \ltimes \specB )^{(0)}) \cong C_0(\specB)    
$. 
\end{proof}

The remainder of this section is devoted to proving Theorem~\ref{thm:psi}. In order to do so, we will need the following analogue of \cite[Lemma 5.4]{DGNRW:Cartan} that
states that every element of the Weyl twist can be represented by a function supported in a bisection and scaled by an explicitly computed factor.

\begin{prop}
\label{prop:Weyl-twist-elt-description}
    Let $\llbracket \alpha_{n}(\phi_\chi), n, \phi_\chi\rrbracket$ be an  arbitrary element of the Weyl twist $\Sigma_{(A,B)}$, and let $\dot{\gamma}\in\qgpd $ be such that
    $
        \varphi(\dot{\gamma},\chi)
        =
        [\alpha_{n}( \phi_\chi ), n,  \phi_\chi ].
    $
    If $f\in N$ satisfies $\dot{\gamma} \in q(\supp'(f)),$ then
    \[ 
        \llbracket \alpha_{n}(\phi_\chi), n, \phi_\chi\rrbracket
        =
        \llbracket \alpha_{f}(\phi_\chi), \lambda f, \phi_\chi\rrbracket
    \quad\text{where}\quad
        \lambda
        =
        \frac{\phi_{\chi} \bigl( \Phi (f^{*} \, n) \bigr)}{\abs{\phi_{\chi} \bigl( \Phi (f^{*} \, n) \bigr)}}
        \in\mbb{T}.
    \]
    \end{prop}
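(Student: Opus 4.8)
The plan is to transfer the equality of Weyl groupoid classes built into the hypothesis into an equivalence in the Weyl twist, keeping track of the unimodular scalar that the coarser relation $\sim$ discards but $\approx$ remembers. First I would record the inputs. Since $f\in N$ with $\dot{\gamma}\in q(\supp'(f))$, the definition of $\varphi$ in Equation~\eqref{eq:defn-of-varphi} gives $\varphi(\dot{\gamma},\chi)=[\alpha_f(\phi_\chi),f,\phi_\chi]$, which by hypothesis coincides with $[\alpha_n(\phi_\chi),n,\phi_\chi]$. Proposition~\ref{prop:Weyl-gpd-similar-to-twist} then furnishes $b_1,b_2\in B$ with $\phi_\chi(b_i)\neq 0$ and $nb_1=fb_2$. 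I would also note that $\rho(\chi)=s(\gamma)\in s(\supp'(f))$, so Proposition~\ref{prop:from-alpha-to-tilde-alpha}\ref{item:dom-f} yields $\phi_\chi\in\dom(f)$ and hence $\phi_\chi(f^{*}f)>0$; and that $\alpha_{\lambda f}=\alpha_f$ for any $\lambda\in\mbb{T}$, since $(\lambda f)^{*}\Omega(g)(\lambda f)=f^{*}\Omega(g)f$, so that the first coordinate $\alpha_f(\phi_\chi)$ appearing in the claim is unambiguous.

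Next I would manufacture the twist equivalence directly from $nb_1=fb_2$. Right-multiplying by $b_1^{*}$ gives $n(b_1b_1^{*})=f(b_2b_1^{*})$, where $\phi_\chi(b_1b_1^{*})=\abs{\phi_\chi(b_1)}^2>0$. Setting $\lambda:=\phi_\chi(b_2b_1^{*})/\abs{\phi_\chi(b_2b_1^{*})}\in\mbb{T}$ and replacing the right-hand cutdown by $\overline{\lambda}\,b_2b_1^{*}$ produces $n(b_1b_1^{*})=(\lambda f)(\overline{\lambda}\,b_2b_1^{*})$, with $\phi_\chi(\overline{\lambda}\,b_2b_1^{*})=\abs{\phi_\chi(b_2b_1^{*})}>0$. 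By the definition of $\approx$ this exhibits $(\alpha_n(\phi_\chi),n,\phi_\chi)\approx(\alpha_{\lambda f}(\phi_\chi),\lambda f,\phi_\chi)$, and using $\alpha_{\lambda f}=\alpha_f$ I conclude $\llbracket\alpha_n(\phi_\chi),n,\phi_\chi\rrbracket=\llbracket\alpha_f(\phi_\chi),\lambda f,\phi_\chi\rrbracket$ for this particular $\lambda$.

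The crux is to show that this $\lambda$ agrees with the intrinsic formula in the statement. Here I would use that $f^{*}f\in B$ (as $f\in N$) together with the $B$-bimodule property of the conditional expectation: applying $\Phi$ to $f^{*}nb_1=f^{*}fb_2$ and invoking right $B$-linearity gives $\Phi(f^{*}n)\,b_1=f^{*}f\,b_2$, whence $\phi_\chi(\Phi(f^{*}n))\,\phi_\chi(b_1)=\phi_\chi(f^{*}f)\,\phi_\chi(b_2)$. The right-hand side is nonzero, so $\phi_\chi(\Phi(f^{*}n))\neq 0$ and the stated $\lambda$ is well defined; and since $\phi_\chi(f^{*}f)>0$, the phase of $\phi_\chi(\Phi(f^{*}n))$ equals the phase of $\phi_\chi(b_2)\overline{\phi_\chi(b_1)}=\phi_\chi(b_2b_1^{*})$, which is precisely the $\lambda$ built in the previous step. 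I expect this phase-matching computation to be the only genuinely delicate point; everything else is bookkeeping with the definition of $\approx$ and with cutting down normalizers by elements of $B$.
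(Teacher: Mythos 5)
Your proof is correct, and it reaches the same destination by a genuinely different technical route: the two arguments differ in how much they extract from Proposition \ref{prop:Weyl-gpd-similar-to-twist}. The paper applies that proposition with $n_1=f$, $n_2=n$ and then reaches into its \emph{proof}: it re-uses the fact that the construction there gives $x(b_2)>0$ (not merely $\neq 0$), and it recalls the explicit formula $b_1=\Omega(k)f^{*}n\,\Omega(\overline{k})$ of Equation~\eqref{eq:def:b_1}, from which $x(b_1)=\phi_\chi\bigl(\Phi(f^{*}n)\bigr)$ follows by $B$-linearity of $\Phi$ and $k(x)=1$; the scalar $\lambda$ is then the phase of $x(b_1)$. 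You instead use only the \emph{statement} of Proposition \ref{prop:Weyl-gpd-similar-to-twist}: starting from $nb_1=fb_2$ with $\phi_\chi(b_i)\neq 0$, you manufacture the strictly positive cut-downs demanded by $\approx$ through right-multiplication by $b_1^{*}$ (so that $\phi_\chi(b_1b_1^{*})=\abs{\phi_\chi(b_1)}^2>0$, and the phase of $\phi_\chi(b_2b_1^{*})$ is absorbed into $\lambda$), and you identify this $\lambda$ with the stated formula by applying $\Phi$ to $f^{*}nb_1=f^{*}fb_2$ and invoking right $B$-linearity together with $\phi_\chi(f^{*}f)>0$. Both arguments rest on the same two pillars --- Proposition \ref{prop:Weyl-gpd-similar-to-twist} and $B$-bilinearity of the conditional expectation --- but yours is self-contained at the level of statements (it would survive any re-proof of that proposition), at the cost of the extra $b_1^{*}$-trick and phase-matching computation, whereas the paper's is shorter because it borrows objects already constructed. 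A further small advantage of your version: it makes explicit that $\phi_\chi\bigl(\Phi(f^{*}n)\bigr)\neq 0$, i.e.\ that the stated formula for $\lambda$ is well defined, which the paper obtains only implicitly through $x(b_1)\neq 0$.
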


Note that by the surjectivity of $\varphi$ (see Theorem~\ref{thm:varphi}), such an element  $\dot{\gamma}$ always exists. Moreover, Proposition~\ref{prop:from-alpha-to-tilde-alpha}\ref{item:from-alpha-to-tilde-alpha} shows that $\alpha_{f}(\phi_{\chi}) = \phi_{\tilde{\alpha}_{\dot{\gamma}}(\chi)}$ for any $f$ as specified in the proposition.

\begin{proof}
    Write $x:=\phi_\chi\in\dom(n)$. 
    We have to find two elements $b,b'\in B$ such that $x (b), x (b') >0$ and $nb = (\lambda f)b'$.
    By assumption on $f$ and definition of $\varphi$, we have
   \begin{equation*}
        [\alpha_{f}( x ), f,  x ]
        =
        \varphi(\dot{\gamma},\chi)
        =
        [\alpha_{n}( x ), n,  x ].
    \end{equation*}
    Proposition \ref{prop:Weyl-gpd-similar-to-twist} therefore tells us that there exist $b_1, b_2 \in B$ such that $f b_1 = n b_2$ and $x(b_1), x(b_2) \not= 0$. In fact, the construction of $b_1, b_2$ in the proof of that proposition gives $x(b_2) > 0$.  Thus, if we set 
    \[b' = \frac{|x(b_1)|}{x(b_1)} b_1,\qquad b = b_2, \qquad  \lambda = \frac{x(b_1)}{|x(b_1)| },\]
    then $x(b), x(b')>0$ and $\lambda f b' = f b_1 = n b_2$ as desired.
    
    To see that $\lambda$ can be equivalently written as in the statement of the proposition, recall from Equation~\eqref{eq:def:b_1} that 
\[ b_1 = \Omega(k) f^{*} n \,\Omega(\overline{k}),\]
where $k \in C_0(\widehat B)$ is supported on $\dom(f) \cap \dom(n)$ and satisfies $k(x) = 1$.  As the conditional expectation $\Phi: A \to B$ is $B$-linear, it follows that 
\[ x(b_1) = \phi_\chi(\Phi(b_1))= \phi_\chi( \Omega(k) \Phi( f^{*} n ) \Omega(\overline{k}))= \phi_\chi(\Phi(f^{*} n)).\]
This yields the asserted 
expression for $\lambda$.
\end{proof}
As we will frequently need to explicitly compute the constant $\lambda$ appearing in Proposition~\ref{prop:Weyl-twist-elt-description}, the following corollary will be helpful.
\begin{cor}\label{cor:explicit-lambda}
   In the setting of Proposition~\ref{prop:Weyl-twist-elt-description}, suppose further that $n \in N$. If the unique element $\gamma$ of $\supp'(n)\cap \agpd_{\rho(\chi)}$ is also an element of  $\supp'(f)$, then
   \[ 
        \llbracket \alpha_{n}(\phi_\chi), n, \phi_\chi\rrbracket
        =
        \llbracket \alpha_{f}(\phi_\chi), \lambda f, \phi_\chi\rrbracket
    \quad\text{where}\quad
        \lambda
        = \frac{\overline{f(\gamma )} }{\abs{f(\gamma )}}\frac{ n(\gamma)}{\abs{ n(\gamma)}}.
    \]
\end{cor}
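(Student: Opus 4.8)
The plan is to specialize the formula $\lambda = \phi_\chi\bigl(\Phi(f^{*}n)\bigr)/\bigl|\phi_\chi\bigl(\Phi(f^{*}n)\bigr)\bigr|$ furnished by Proposition~\ref{prop:Weyl-twist-elt-description} to the case $n\in N$, where the convolution $f^{*}n$ can be evaluated by hand. First I would record that $f^{*}\in N$ (its support is the bisection $\supp(f)\inv$) and hence $f^{*}n\in N$, so $f^{*}n$ is again supported on a bisection (cf.~\cite[Lemma 3.1.4]{AidansNotes}). Writing $u:=\rho(\chi)$ and using that $\Phi$ restricts functions in $C_c(\gpd,c)$ to $\agpd$, the definition of $\phi_\chi$ gives
\[
    \phi_\chi\bigl(\Phi(f^{*}n)\bigr) = \sum_{a\in\agpd_{u}} \chi(a)\,(f^{*}n)(a).
\]

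The key step is to show that the only $a\in\agpd_{u}$ at which $f^{*}n$ does not vanish is the unit $u$. Expanding $(f^{*}n)(a)=\sum_{\xi\rho=a} f^{*}(\xi)\,n(\rho)\,c(\xi,\rho)$, a nonzero term forces $\rho\in\supp'(n)$ with $s(\rho)=s(a)=u$; since $n$ is supported on a bisection and $\gamma\in\supp'(n)$ has $s(\gamma)=u$, this means $\rho=\gamma$ and $\xi=a\gamma\inv$. The requirement $\xi\inv=\gamma a\inv\in\supp'(f)$, together with the fact that $f$ is supported on a bisection and $\gamma\in\supp'(f)$ has source $u$, then forces $\gamma a\inv=\gamma$, i.e.\ $a=u$. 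I expect this bookkeeping with the source and range maps on the two bisections to be the main (if modest) obstacle: one must keep careful track of which elements have source $u$, and invoke the injectivity of $s$ on each bisection at the right moment.

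Once this is established, I would evaluate $(f^{*}n)(u)$. The constraint $\xi\rho=u$ means $\rho=\xi\inv$, so the involution formula $f^{*}(\xi)=\overline{f(\xi\inv)\,c(\xi,\xi\inv)}$ yields, after the factors $\overline{c(\xi,\xi\inv)}\,c(\xi,\xi\inv)=1$ cancel,
\[
    (f^{*}n)(u) = \sum_{\zeta\in\gpd_{u}} \overline{f(\zeta)}\,n(\zeta) = \overline{f(\gamma)}\,n(\gamma),
\]
the last equality again using that $\gamma$ is the unique element of $\supp'(n)$ with source $u$ and that $\gamma\in\supp'(f)$ by hypothesis. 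Since $\chi(u)=1$, the displayed sum collapses to $\phi_\chi\bigl(\Phi(f^{*}n)\bigr)=\overline{f(\gamma)}\,n(\gamma)$, and substituting into the formula from Proposition~\ref{prop:Weyl-twist-elt-description} gives
\[
    \lambda = \frac{\overline{f(\gamma)}\,n(\gamma)}{\bigl|\overline{f(\gamma)}\,n(\gamma)\bigr|} = \frac{\overline{f(\gamma)}}{\abs{f(\gamma)}}\,\frac{n(\gamma)}{\abs{n(\gamma)}},
\]
as claimed.
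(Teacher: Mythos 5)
Your proposal is correct and follows essentially the same route as the paper: specialize the formula $\lambda = \phi_\chi\bigl(\Phi(f^{*}n)\bigr)/\abs{\phi_\chi\bigl(\Phi(f^{*}n)\bigr)}$ from Proposition~\ref{prop:Weyl-twist-elt-description}, use that $\Phi$ restricts functions in $C_c(\gpd,c)$ to $\agpd$ and that $f^{*}n\in N$, and collapse the convolution sum to the single term $\overline{f(\gamma)}\,n(\gamma)$ by exploiting the bisection supports. The only cosmetic difference is that your parametrization $\rho=\xi\inv$ makes the cocycle factors cancel outright, whereas the paper collapses the double sum at once and then invokes normalization together with $c(\gamma,\gamma\inv)=c(\gamma\inv,\gamma)$ from \cite[Lemma 2.1]{DGNRW:Cartan}.
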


\begin{proof}
Since $f^{*} n \in N$, \cite[Proposition 3.13]{DGNRW:Cartan} tells us that $\Phi(f^{*}n)$ is  given by restriction to $\agpd$.  In particular, $\Phi(f^{*} n) \in C_c(\agpd, c)$. The definition of $\phi_\chi$  therefore yields  
\[\phi_\chi( \Phi(f^{*} n) )= \sum_{a\in \agpd_{\rho(\chi)}} \chi(a
) f^{*} n (a) = \sum_{a\in \agpd_{\rho(\chi)}} \chi(a) \sum_{r(\eta) = \rho(\chi)} f^{*}(a\eta) n(\eta\inv ) c(a \eta, \eta\inv ).\]
Since $f$ and $n$ are supported in bisections, 
and both have the same element of $\gpd_{\rho(\chi)}$ in their support,
only the summand
where
$\eta:=\gamma\inv$ and $a:=\rho(\chi)$ 
does
not vanish, i.e.\ the sum above simplifies to a single term:
\[ \phi_\chi(\Phi(f^*n)) = \chi(\rho(\chi)) \overline{f(\gamma \rho(\chi)\inv ) c(\gamma \rho(\chi)\inv , \rho(\chi)\gamma\inv )} n(\gamma) c(\rho(\chi)\gamma\inv , \gamma).\]
As $\chi(\rho(\chi)) = 1$ and $c(\gamma^{-1}, \gamma) = c(\gamma, \gamma^{-1}),$ we conclude that
$
    \phi_\chi( \Phi(f^{*} n)) = \overline{f(\gamma)} n(\gamma)$ and hence the desired formula for $\lambda$.
\end{proof}

The proof of Theorem~\ref{thm:psi}  now proceeds through a series of lemmata. To simplify notation, we will write $\psi$ for $\psi_{\qsec }$ and~$C$ for~$C^\qsec $ in the proofs, tacitly using the chosen~$\qsec $.

\begin{lemma}    
\label{lem:psi-surjective}
    The map $\psi_{\qsec }$ is surjective and does not depend on the choice of $f_{\dot{\gamma}}$.
\end{lemma}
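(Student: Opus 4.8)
The plan is to handle the two assertions separately, in each case pushing an arbitrary representative down to the bisection-supported representatives $f_{\dot\gamma}$ that we have already analyzed. For surjectivity, I would begin with an arbitrary class $\llbracket \alpha_{n}(x), n, x\rrbracket\in\Sigma_{(A,B)}$, where $n\in N(B)$ and $x\in\dom(n)$. Since $\phi\colon\specB\to\widehat{B}$ is a bijection (Lemma~\ref{lem:phi-surjective}), I may write $x=\phi_\chi$ for a unique $\chi\in\specB$. By surjectivity of $\varphi$ (Theorem~\ref{thm:varphi}), there is $\dot\gamma\in\qgpd$ with $\varphi(\dot\gamma,\chi)=[\alpha_{n}(\phi_\chi), n, \phi_\chi]$, and in particular $(\dot\gamma,\chi)\in\qgpd\ltimes\specB$. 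Because $f_{\dot\gamma}(\qsec(\dot\gamma))>0$, the point $\qsec(\dot\gamma)$ lies in $\supp'(f_{\dot\gamma})$, so $\dot\gamma\in q(\supp'(f_{\dot\gamma}))$ and Proposition~\ref{prop:Weyl-twist-elt-description} applies with $f=f_{\dot\gamma}$. It produces a scalar $\lambda\in\mbb{T}$ with $\llbracket \alpha_{n}(\phi_\chi), n, \phi_\chi\rrbracket=\llbracket \alpha_{f_{\dot\gamma}}(\phi_\chi), \lambda f_{\dot\gamma}, \phi_\chi\rrbracket$. Finally, Proposition~\ref{prop:from-alpha-to-tilde-alpha}\ref{item:from-alpha-to-tilde-alpha} identifies $\alpha_{f_{\dot\gamma}}(\phi_\chi)=\phi_{\tilde\alpha_{\dot\gamma}(\chi)}$, so the right-hand side is exactly $\psi_{\qsec}(\dot\gamma,\chi,\lambda)$, establishing surjectivity.

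For independence of the choice of the family $(f_{\dot\gamma})$, I would let $(g_{\dot\gamma})$ be a second admissible family, i.e.\ $g_{\dot\gamma}\in N$ with $g_{\dot\gamma}(\qsec(\dot\gamma))>0$, and fix a triple $(\dot\gamma,\chi,\lambda)$. Writing $\gamma:=\qsec(\dot\gamma)$, this $\gamma$ is the unique element of $\supp'(g_{\dot\gamma})\cap\gpd_{\rho(\chi)}$, and it also lies in $\supp'(f_{\dot\gamma})$. Hence Corollary~\ref{cor:explicit-lambda}, applied with $n=g_{\dot\gamma}$ and $f=f_{\dot\gamma}$, gives
\[
\llbracket \alpha_{g_{\dot\gamma}}(\phi_\chi), g_{\dot\gamma}, \phi_\chi\rrbracket
=
\llbracket \alpha_{f_{\dot\gamma}}(\phi_\chi), \mu f_{\dot\gamma}, \phi_\chi\rrbracket,
\qquad
\mu=\frac{\overline{f_{\dot\gamma}(\gamma)}}{\abs{f_{\dot\gamma}(\gamma)}}\,\frac{g_{\dot\gamma}(\gamma)}{\abs{g_{\dot\gamma}(\gamma)}}.
\]
Since $f_{\dot\gamma}(\gamma)>0$ and $g_{\dot\gamma}(\gamma)>0$ by the normalization of both families, $\mu=1$, so $\llbracket \phi_{\tilde\alpha_{\dot\gamma}(\chi)}, g_{\dot\gamma}, \phi_\chi\rrbracket=\llbracket \phi_{\tilde\alpha_{\dot\gamma}(\chi)}, f_{\dot\gamma}, \phi_\chi\rrbracket$ after using Proposition~\ref{prop:from-alpha-to-tilde-alpha}\ref{item:from-alpha-to-tilde-alpha} to rewrite the first coordinates. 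Applying the $\mbb{T}$-action on $\Sigma_{(A,B)}$ then yields
\[
\llbracket \phi_{\tilde\alpha_{\dot\gamma}(\chi)}, \lambda g_{\dot\gamma}, \phi_\chi\rrbracket
=\lambda\cdot\llbracket \phi_{\tilde\alpha_{\dot\gamma}(\chi)}, g_{\dot\gamma}, \phi_\chi\rrbracket
=\lambda\cdot\llbracket \phi_{\tilde\alpha_{\dot\gamma}(\chi)}, f_{\dot\gamma}, \phi_\chi\rrbracket
=\llbracket \phi_{\tilde\alpha_{\dot\gamma}(\chi)}, \lambda f_{\dot\gamma}, \phi_\chi\rrbracket,
\]
so the value of $\psi_{\qsec}(\dot\gamma,\chi,\lambda)$ is unchanged.

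The main obstacle is the independence step, and specifically the need to force the phase $\mu$ to be exactly $1$ rather than a general element of $\mbb{T}$. This is precisely why the positivity normalization $f_{\dot\gamma}(\qsec(\dot\gamma))>0$ is imposed on the defining families, and why it is worth isolating Corollary~\ref{cor:explicit-lambda}: the coarser representative-comparison in Proposition~\ref{prop:Weyl-twist-elt-description} only controls $\lambda$ up to $\mbb{T}$, whereas here I need the exact value. I expect the surjectivity half to be essentially bookkeeping once the earlier results are in place; the care required is in confirming that $\gamma=\qsec(\dot\gamma)$ really is the unique source-$\rho(\chi)$ element in both supports, which is what makes Corollary~\ref{cor:explicit-lambda} applicable and collapses $\mu$ to $1$.
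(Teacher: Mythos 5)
Your proof is correct and follows essentially the same route as the paper: surjectivity via bijectivity of $\phi$, surjectivity of $\varphi$, and Proposition~\ref{prop:Weyl-twist-elt-description}, and independence of the choice via Corollary~\ref{cor:explicit-lambda} with the positivity normalization collapsing the phase. The only cosmetic difference is that you apply the corollary to the unscaled normalizers (getting $\mu=1$) and then reintroduce $\lambda$ through the $\mbb{T}$-action, whereas the paper applies it directly to $\lambda f_{\dot{\gamma}}$ and concludes $\mu=\lambda$.
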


\begin{proof}
    Let $\psi:= \psi_{\qsec }.$
    Since  
    $
        \rho(\chi)=s(\qsec (\dot{\gamma}))\in s(\supp'(f_{\dot{\gamma}}))
    $, Proposition~\ref{prop:from-alpha-to-tilde-alpha}\ref{item:dom-f} implies that $\phi_\chi\in\dom(f_{\dot{\gamma}}).$ Therefore $\psi_\qsec(\dot{\gamma}, \chi, \lambda)$ is a well-defined element of $\Sigma_{(A,B)}$.
    
    Let $\llbracket \alpha_{n}(x), n, x\rrbracket$ be an arbitrary element of $\Sigma_{(A,B)}$. By the surjectivity of $\varphi$ (see Proposition~\ref{thm:varphi}) there indeed exists $\dot{\gamma}$ such that $\varphi(\dot{\gamma},\chi)=[ \alpha_{n}(x), n, x ],$ where $x=\phi_{\chi}$ (see Lemma~\ref{lem:phi-surjective}). It then follows from Proposition~\ref{prop:Weyl-twist-elt-description} that, since $f_{\dot{\gamma}}\in N$ has $\dot{\gamma}\in q(\supp'(f))$, we have
    \[ 
        \llbracket \alpha_{n}(x), n, x\rrbracket
        =
        \psi(\dot{\gamma}, \chi, \lambda)
        \quad\text{where}\quad
        \lambda
        =
        \frac{\phi_{\chi} \bigl( \Phi (f_{\dot{\gamma}}^{*} \, n) \bigr)}{\abs{\phi_{\chi} \bigl( \Phi (f_{\dot{\gamma}}^{*} \, n) \bigr)}}
        \in\mbb{T},
    \] 
    which proves surjectivity of $\psi$. 
    
    Now suppose $f$ is another element of $N$ with $f(\qsec (\dot{\gamma}))>0$. Since $\gamma:=\qsec (\dot{\gamma})$ is in both $\supp'(f)$ and  $\supp'(f_{\dot{\gamma}})$,  Corollary~\ref{cor:explicit-lambda} yields  $ 
    \llbracket \alpha_{f_{\dot{\gamma}}}(\phi_\chi), \lambda f_{\dot{\gamma}}, \phi_\chi\rrbracket=\llbracket \alpha_{f}(\phi_\chi), \mu f, \phi_\chi\rrbracket$ for
    \[
        \mu =
        \frac{\overline{f(\gamma )} }{\abs{f(\gamma )}}\frac{ \lambda f_{\dot{\gamma}}(\gamma)}{\abs{ f_{\dot{\gamma}}(\gamma)}}
    \]
    Since we have both $f_{\dot{\gamma}}(\gamma) > 0$ and $f(\gamma)  > 0$ by hypothesis, we conclude $\mu=\lambda$. Thus, the map $\psi_\qsec$ does not depend on the choice of $f_{\dot{\gamma}}$.
\end{proof}

\begin{lemma}
\label{lem:top-prin=>psi-injective}
    The map $\psi_{\qsec }$ is injective.
\end{lemma}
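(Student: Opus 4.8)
The plan is to peel the three coordinates of a triple off one at a time, repeatedly using that twist-equivalence ($\approx$) implies Weyl-groupoid-equivalence ($\sim$), together with the injectivity results already established. Suppose
$\psi_{\qsec}(\dot{\gamma}_1,\chi_1,\lambda_1)=\psi_{\qsec}(\dot{\gamma}_2,\chi_2,\lambda_2)$ in $\Sigma_{(A,B)}$. Unpacking the definition of $\approx$, equality of the first coordinates forces $\phi_{\chi_1}=\phi_{\chi_2}$; since $\phi$ is injective (Lemma~\ref{lem:phionetoone}) this gives $\chi_1=\chi_2=:\chi$, and hence $\rho(\chi_1)=\rho(\chi_2)=:u$. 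Because $f_{\dot{\gamma}_i}(\qsec(\dot{\gamma}_i))>0$ we have $\qsec(\dot{\gamma}_i)\in\supp'(f_{\dot{\gamma}_i})$, so $\rho(\chi)=s(\qsec(\dot{\gamma}_i))\in s(\supp'(f_{\dot{\gamma}_i}))$ and Proposition~\ref{prop:from-alpha-to-tilde-alpha}\ref{item:dom-f} guarantees $\phi_\chi\in\dom(f_{\dot{\gamma}_i})$ for $i=1,2$.

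Next I would recover $\dot{\gamma}_1=\dot{\gamma}_2$. Writing $n_i:=\lambda_i f_{\dot{\gamma}_i}$, note that $n_i\in N$ (scaling by a unimodular constant does not change the support) and that $\alpha_{n_i}=\alpha_{f_{\dot{\gamma}_i}}$, so the hypothesis reads
$\llbracket \alpha_{n_1}(\phi_\chi), n_1, \phi_\chi\rrbracket=\llbracket \alpha_{n_2}(\phi_\chi), n_2, \phi_\chi\rrbracket$. Since $\dot{\gamma}_i\in q(\supp'(n_i))$ and $s(\dot{\gamma}_1)=s(\dot{\gamma}_2)=u$, the ``moreover'' clause of Lemma~\ref{lem:3-implies-2} (via case~\ref{item:Weyl-twist-implies-points}) yields $\dot{\gamma}_1=\dot{\gamma}_2=:\dot{\gamma}$, and therefore $f_{\dot{\gamma}_1}=f_{\dot{\gamma}_2}=:f$. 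One could instead pass to $\mc{G}_{(A,B)}$ and invoke the injectivity of $\varphi$ from Theorem~\ref{thm:varphi}, which is where topological principality of the Weyl groupoid ultimately enters.

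It then remains to identify the circle coordinates. The hypothesis now reads $\llbracket \alpha_f(\phi_\chi), \lambda_1 f, \phi_\chi\rrbracket=\llbracket \alpha_f(\phi_\chi), \lambda_2 f, \phi_\chi\rrbracket$, so by the definition of $\approx$ there exist $b,b'\in B$ with $\phi_\chi(b),\phi_\chi(b')>0$ and $\lambda_1 f b=\lambda_2 f b'$. I would left-multiply by $f^{*}$ and use that $f^{*}f\in B$ (as $f\in N(B)$ and $B$ contains an approximate identity) together with commutativity of $B$ to obtain $\lambda_1(f^{*}f)b=\lambda_2(f^{*}f)b'$ in $B$. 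Applying the character $\phi_\chi$ and using its multiplicativity gives
\[
\lambda_1\,\phi_\chi(f^{*}f)\,\phi_\chi(b)=\lambda_2\,\phi_\chi(f^{*}f)\,\phi_\chi(b').
\]
Here $\phi_\chi(f^{*}f)>0$ since $\phi_\chi\in\dom(f)$, and $\phi_\chi(b),\phi_\chi(b')>0$ by construction; so each side is a unimodular scalar times a strictly positive real. Taking absolute values forces the two positive factors to coincide, and cancelling them yields $\lambda_1=\lambda_2$. Combining the three steps gives $(\dot{\gamma}_1,\chi_1,\lambda_1)=(\dot{\gamma}_2,\chi_2,\lambda_2)$, so $\psi_{\qsec}$ is injective.

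The main obstacle I anticipate is bookkeeping rather than conceptual: carrying the scalar $\lambda$ faithfully through the relation $\approx$ and verifying at each reduction that scaling a normalizer by a unimodular constant leaves both its support and its partial homeomorphism $\alpha$ unchanged, so that the passages to $\sim$ and to Lemma~\ref{lem:3-implies-2} are legitimate. No genuinely new ingredient beyond the injectivity of $\phi$, the structure of $\approx$, and Lemma~\ref{lem:3-implies-2} (or injectivity of $\varphi$) appears to be needed.
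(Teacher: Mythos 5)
Your proof is correct and takes essentially the same route as the paper's: both peel off $\chi$ via injectivity of $\phi$ (Lemma~\ref{lem:phionetoone}), then obtain $\dot{\gamma}_1=\dot{\gamma}_2$ from the ``moreover'' clause of Lemma~\ref{lem:3-implies-2}\ref{item:Weyl-twist-implies-points} applied to the normalizers $\lambda_i f_{\dot{\gamma}_i}\in N$, and finally get $\lambda_1=\lambda_2$ by left-multiplying the relation $\lambda_1 f b=\lambda_2 f b'$ by $f^{*}$, evaluating at $\phi_\chi$, and using $\phi_\chi(f^{*}f)>0$ together with positivity of $\phi_\chi(b),\phi_\chi(b')$. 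One cosmetic slip: the definition of $\approx$ forces equality of the \emph{right-most} entries of the representing triples (the base points), not the first; the conclusion $\phi_{\chi_1}=\phi_{\chi_2}$ you draw from it is nonetheless exactly what that condition gives.
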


\begin{proof}
   Let $\psi:= \psi_{\qsec }.$
If $\psi (\dot{\gamma}, \chi, \lambda) = \psi (\dot{ \tau}, \chi', \lambda')$, then
   \begin{equation}
   \label{eq:useful}
    \llbracket \phi(\tilde{\alpha}_{\dot{\gamma}}(\chi)), \lambda f_{\dot{\gamma}}, \phi_{\chi}\rrbracket
    =
    \llbracket \phi(\tilde{\alpha}_{\dot{ \tau}}(\chi')), \lambda' f_{\dot{ \tau}}, \phi_{\chi'}\rrbracket .
   \end{equation}
   Comparing the right-most components yields $\chi=\chi'$ by Lemma \ref{lem:phionetoone}. 
    In particular, $s(\dot{\gamma})=\rho(\chi)=s(\dot{\tau})$. Since $\dot{\gamma}\in q(\supp'(f_{\dot{\gamma}}))$ and $\dot{\tau}\in q(\supp'(f_{\dot{\tau}}))$, 
    the second assertion of
    Lemma~\ref{lem:3-implies-2}\ref{item:Weyl-twist-implies-points}
    then yields $\dot{\gamma}=\dot{ \tau}.$  Equation \eqref{eq:useful} then implies the existence of $b,b'\in B$ such that $\phi_\chi (b),\phi_\chi (b') >0$ and \[(\lambda f_{\dot{\gamma}})b = (\lambda' f_{\dot{ \tau}})b'=(\lambda' f_{\dot{\gamma}})b'.\]
   If we multiply both sides by $f_{\dot{\gamma}}^{*}$ and evaluate at $\phi_\chi$, we get
   \[
        \lambda \,\phi_\chi(f_{\dot{\gamma}}^{*}f_{\dot{\gamma}})
        \,\phi_\chi(b)
        =
        \lambda'\,
        \phi_\chi(f_{\dot{\gamma}}^{*}\,    f_{\dot{\gamma}}) \phi_\chi(b').
   \]
  Since $\rho(\chi)=s(\dot{\gamma})
  =s(\qsec({\dot{\gamma}}))
  $, 
   Proposition~\ref{prop:from-alpha-to-tilde-alpha}\ref{item:dom-f}
   implies that $ \phi_\chi(f_{\dot{\gamma}}^{*}    f_{\dot{\gamma}}) > 0$.
   As $\phi_\chi (b)$ and $\phi_\chi (b')$ are positive also and $\lambda, \lambda'\in\mbb{T}$, we conclude that  $\lambda=\lambda'$.
\end{proof}

\begin{lemma}\label{lem:psi-T-equiv}
    The map $\psi_{\qsec}$ is a $\mbb{T}$-equivariant groupoid homomorphism.
\end{lemma}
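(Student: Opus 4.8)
The plan is to establish the two defining properties of the map separately: $\mbb{T}$-equivariance, which is essentially formal, and multiplicativity, which carries all of the content and is where the cocycle $C^\qsec$ was engineered to make things work out.

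First I would dispose of $\mbb{T}$-equivariance. Recall from the construction recalled before Theorem~\ref{thm:psi} that the $\mbb{T}$-action on the domain is $\mu\cdot(\dot{\gamma},\chi,\lambda)=(\dot{\gamma},\chi,\mu\lambda)$ and that the action on $\Sigma_{(A,B)}$ is $\mu\cdot\llbracket\alpha_n(\phi_\chi),n,\phi_\chi\rrbracket=\llbracket\alpha_n(\phi_\chi),\mu n,\phi_\chi\rrbracket$. Since scaling the third coordinate by $\mu$ simply replaces the normalizer $\lambda f_{\dot{\gamma}}$ by $\mu\lambda f_{\dot{\gamma}}$, equivariance is read directly off the defining formula for $\psi_\qsec$. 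I will also note at the end that a map which preserves composability and products between groupoids automatically preserves units and inverses, so it suffices to treat multiplication.

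For multiplicativity I would take a composable pair and, after renaming the right-hand source $\chi$, write it as $(\dot{\tau},\tilde{\alpha}_{\dot{\gamma}}(\chi),\lambda_1)$ and $(\dot{\gamma},\chi,\lambda_2)$. Setting $\kappa:=C^\qsec\bigl((\dot{\tau},\tilde{\alpha}_{\dot{\gamma}}(\chi)),(\dot{\gamma},\chi)\bigr)$, the product in $(\qgpd\ltimes\specB)\times_{C^\qsec}\mbb{T}$ is $(\dot{\tau}\dot{\gamma},\chi,\kappa\lambda_1\lambda_2)$, so the left-hand side of the desired identity is $\llbracket\phi(\tilde{\alpha}_{\dot{\tau}\dot{\gamma}}(\chi)),\ \kappa\lambda_1\lambda_2\,f_{\dot{\tau}\dot{\gamma}},\ \phi_\chi\rrbracket$. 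On the right-hand side, Proposition~\ref{prop:from-alpha-to-tilde-alpha}\ref{item:from-alpha-to-tilde-alpha} gives $\phi(\tilde{\alpha}_{\dot{\gamma}}(\chi))=\alpha_{f_{\dot{\gamma}}}(\phi_\chi)$, so the two images $\psi_\qsec(\dot{\tau},\tilde{\alpha}_{\dot{\gamma}}(\chi),\lambda_1)$ and $\psi_\qsec(\dot{\gamma},\chi,\lambda_2)$ share the matching middle unit and are composable in $\Sigma_{(A,B)}$; their product, by the Weyl-twist multiplication and the bilinearity of twisted convolution, is $\llbracket\phi(\tilde{\alpha}_{\dot{\tau}\dot{\gamma}}(\chi)),\ \lambda_1\lambda_2\,(f_{\dot{\tau}}f_{\dot{\gamma}}),\ \phi_\chi\rrbracket$, where $f_{\dot{\tau}}f_{\dot{\gamma}}$ is the convolution product. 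The crux is then to reconcile the convolution representative with the \emph{chosen} representative $f_{\dot{\tau}\dot{\gamma}}$: both lie in $N$ and, by Lemma~\ref{lem:varphi-welldefined}, represent the same Weyl-groupoid element $\varphi(\dot{\tau}\dot{\gamma},\chi)$. Applying Proposition~\ref{prop:Weyl-twist-elt-description} with $n:=f_{\dot{\tau}}f_{\dot{\gamma}}$ and $f:=f_{\dot{\tau}\dot{\gamma}}$ rewrites $\llbracket\alpha_{f_{\dot{\tau}}f_{\dot{\gamma}}}(\phi_\chi),f_{\dot{\tau}}f_{\dot{\gamma}},\phi_\chi\rrbracket=\llbracket\alpha_{f_{\dot{\tau}\dot{\gamma}}}(\phi_\chi),\lambda_0 f_{\dot{\tau}\dot{\gamma}},\phi_\chi\rrbracket$ with scaling factor $\lambda_0=\phi_\chi\bigl(\Phi(f_{\dot{\tau}\dot{\gamma}}^{*}f_{\dot{\tau}}f_{\dot{\gamma}})\bigr)/\bigl|\phi_\chi\bigl(\Phi(f_{\dot{\tau}\dot{\gamma}}^{*}f_{\dot{\tau}}f_{\dot{\gamma}})\bigr)\bigr|$, which is exactly $\kappa$ by the defining formula for $C^\qsec$ in Proposition~\ref{prop:def:cocyle-for-Weyl} (note $q(\qsec(\dot{\tau})\qsec(\dot{\gamma}))=\dot{\tau}\dot{\gamma}$, so the chosen function there is $f_{\dot{\tau}\dot{\gamma}}$). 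Pulling the scalars through using the equivariance already established and applying Proposition~\ref{prop:from-alpha-to-tilde-alpha}\ref{item:from-alpha-to-tilde-alpha} once more to identify $\alpha_{f_{\dot{\tau}\dot{\gamma}}}(\phi_\chi)=\phi(\tilde{\alpha}_{\dot{\tau}\dot{\gamma}}(\chi))$ then matches the right-hand side to the left-hand side.

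The one genuinely delicate point, and the main obstacle, is that one cannot shortcut this last step through Corollary~\ref{cor:explicit-lambda}: the unique source-$\rho(\chi)$ element of $\supp'(f_{\dot{\tau}}f_{\dot{\gamma}})$ is $\qsec(\dot{\tau})\qsec(\dot{\gamma})$, which differs from $\qsec(\dot{\tau}\dot{\gamma})$ by a generally nontrivial element of $\agpd$ and hence need not lie in $\supp'(f_{\dot{\tau}\dot{\gamma}})$, so the supports of $n$ and $f$ need not overlap in the required way. This is precisely why the full strength of Proposition~\ref{prop:Weyl-twist-elt-description} is required, and the entire design of $C^\qsec$ is so that the scalar it produces when changing representatives is the cocycle value; I would emphasize this and keep careful track of the bookkeeping between $\phi(\tilde{\alpha}_{\dot{\gamma}}(\chi))$ and $\alpha_{f_{\dot{\gamma}}}(\phi_\chi)$ throughout.
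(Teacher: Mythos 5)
Your proposal is correct and takes essentially the same route as the paper's proof: equivariance by direct inspection of the defining formula, then multiplicativity by computing the Weyl-twist product to get the convolution representative $f_{\dot{\tau}}f_{\dot{\gamma}}$ and invoking Proposition~\ref{prop:Weyl-twist-elt-description} to pass to the chosen representative $f_{\dot{\tau}\dot{\gamma}}$, with the resulting scalar recognized as exactly the cocycle value $C^{\qsec}$. The only deviations are cosmetic bookkeeping: you factor the scalars $\lambda_1\lambda_2$ out before applying the proposition (the paper carries them inside the normalizer $n$), and you cite Lemma~\ref{lem:varphi-welldefined} for the support condition $\dot{\tau}\dot{\gamma}\in q(\supp'(f_{\dot{\tau}}f_{\dot{\gamma}}))$ rather than re-verifying it inline, and your closing remark about why Corollary~\ref{cor:explicit-lambda} cannot be used is a correct observation consistent with the paper's choice to use the full proposition.
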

\begin{proof}
    Let $\psi:= \psi_{\qsec }$ and $C:= C^{\qsec }.$    Regarding $\mbb{T}$-invariance, recall that for any $z\in\mbb{T}$, 
    \begin{align*}
        z\, \psi(\dot{\gamma},\chi,\lambda)
        =&
        z\, \llbracket \phi(\tilde{\alpha}_{\dot{\gamma}}(\chi)), \lambda f_{\dot{\gamma}}, \phi_{\chi}\rrbracket
        =
        \llbracket \phi(\tilde{\alpha}_{\dot{\gamma}}(\chi)), z \lambda f_{\dot{\gamma}}, \phi_{\chi}\rrbracket
        \\
        =&
        \psi(\dot{\gamma},\chi,z \lambda)
        =
        \psi\bigl(z\cdot (\dot{\gamma},\chi,\lambda)\bigr).
    \end{align*}
    
    To see that $\psi$ is multiplicative, recall that each composable pair in $ (\qgpd  \ltimes \specB  ) \times_{C}\mbb{T}$ is of the form
       \[
            \Bigl(
                (\dot{\gamma}_{1},\tilde{\alpha}_{\dot{\gamma}_{2}}(\chi),\lambda_{1})
                ,
                (\dot{\gamma}_{2},\chi,\lambda_{2})
            \Bigr),
            \text{ where }
            \chi\in \specB_{s(\dot{\gamma_{2}})} 
            \text{ and }
            s(\dot{\gamma_{1}})=r(\dot{\gamma_{2}}).
       \]
    If we let $h_{1}:= (\dot{\gamma}_{1},\tilde{\alpha}_{\dot{\gamma}_{2}}(\chi))$ and $h_{2}:= (\dot{\gamma}_{2},\chi)\in \qgpd  \ltimes \specB  $, then the composition of the above composable pair in $ (\qgpd  \ltimes \specB  ) \times_{C} \mbb{T}$ is given by $( q(\gamma_{1} \gamma_{2}) ,\chi,C(h_{1}, h_{2}) \lambda_{1} \lambda_{2}).$ 
    Then
       \begin{align*}
            \psi ( (h_{1},\lambda_{1})\cdot ( h_{2},\lambda_{2}))
            =&\llbracket \phi(\tilde{\alpha}_{ q(\gamma_{1} \gamma_{2}) }(\chi)), C(h_{1},h_{2})\lambda_{1} \lambda_{2} \, f_{q(\gamma_{1}\gamma_{2})}, \phi(\chi)\rrbracket
            .
       \end{align*}
       On the other hand,
       \begin{align*}
            \psi (h_{1},\lambda_{1})
            \,
            \psi(h_{2},\lambda_{2})
            =&
            \llbracket \phi( \tilde{\alpha}_{\dot{\gamma}_{1}}(\tilde{\alpha}_{\dot{\gamma}_{2}}(\chi))), \lambda_{1} f_{\dot{\gamma}_{1}}, \phi(\tilde{\alpha}_{\dot{\gamma}_{2}}(\chi))\rrbracket
            \,
            \llbracket \phi(\tilde{\alpha}_{\dot{\gamma}_{2}}(\chi)), \lambda_{2} f_{\dot{\gamma}_{2}}, \phi(\chi)\rrbracket
            \\
            =&
            \llbracket \phi( \tilde{\alpha}_{ q(\gamma_{1} \gamma_{2}) }(\chi)), \lambda_{1} \lambda_{2}\, (f_{\dot{\gamma}_{1}}
            f_{\dot{\gamma}_{2}}), \phi(\chi)\rrbracket ,
        \end{align*}
        where the last equality follows from the definition of the groupoid structure on $\Sigma_{(A,B)}$ and because $\tilde{\alpha}$ is a homomorphism.

        Let $n:= \lambda_{1} \lambda_{2} (f_{\dot{\gamma}_{1}}
        f_{\dot{\gamma}_{2}}).$ Note that $n$ is an element of $N$ with $\supp'(n) = \supp' (f_{\dot{\gamma}_1}) \cdot \supp' (f_{\dot{\gamma}_2})$; in particular,
       \[
            \dot{\gamma}_1 \dot{\gamma}_2 = q(\gamma_{1}\gamma_{2}) \in q\left( \supp' (f_{\dot{\gamma}_1}) \cdot \supp' (f_{\dot{\gamma}_2}) \right) =q(\supp'(n)).
        \]
        Thus,
        \[
            [
                \phi( \tilde{\alpha}_{ q(\gamma_{1} \gamma_{2}) }(\chi)), n, \phi(\chi)
            ]
            =
            \varphi (\dot{\gamma}_1 \dot{\gamma}_2, \chi),
        \]
        and so if we apply Proposition~\ref{prop:Weyl-twist-elt-description}, we get 
        \[
            \llbracket \phi( \tilde{\alpha}_{ q(\gamma_{1} \gamma_{2}) }(\chi)), n, \phi(\chi)\rrbracket
            =
            \llbracket \phi( \tilde{\alpha}_{ q(\gamma_{1} \gamma_{2}) }(\chi)), \lambda f, \phi(\chi)\rrbracket
        \]
        where 
        \[
            f:= f_{q(\gamma_{1}\gamma_{2})} 
            \quad\text{and}\quad
            \lambda :=
            \frac{\phi_{\chi}\bigl( \Phi (f^{*} n)\bigr) }{\abs{\phi_{\chi}\bigl( \Phi (f^{*} n)\bigr)}}.
        \]
        By definition of $C$
        (see Proposition~\ref{prop:def:cocyle-for-Weyl})
        and $n$, we have
        \[
            \lambda_{1}\lambda_{2} C(h_{1},h_{2})
            =
            \lambda_{1}\lambda_{2} \frac{\phi_{\chi}\bigl( \Phi (f^{*} f_{\dot{\gamma}_{1}}f_{\dot{\gamma}_{2}})\bigr) }{\abs{\phi_{\chi}\bigl( \Phi (f^{*} f_{\dot{\gamma}_{1}}f_{\dot{\gamma}_{2}})\bigr)}}
            =
            \lambda.
        \]
        We conclude that $\psi (h_{1},\lambda_{1})
            \,
            \psi(h_{2},\lambda_{2})
            =
            \psi ( (h_{1},\lambda_{1})\cdot (h_{2},\lambda_{2})),$
        as claimed.
\end{proof}

\begin{lemma}%
\label{lem:csec-cts=>psi-cts}
    If $\qsec $ is a continuous section, the map $\psi_{\qsec }$ is continuous.
\end{lemma}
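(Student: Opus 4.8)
The plan is to reduce the problem to a single, fixed normalizer on a tail of the net, in the same spirit as the continuity argument for $\varphi$ in Lemma~\ref{lem:varphi-cts}. First I would take a net $(\dot{\gamma}_i, \chi_i, \lambda_i)$ converging to $(\dot{\gamma}, \chi, \lambda)$ in $(\qgpd \ltimes \specB) \times_{C^\qsec} \mbb{T}$. Since $\qsec$ is continuous, $C^\qsec$ is continuous by Lemma~\ref{lem:C-cts}, so this topology is the product topology and convergence of the net amounts to $\dot{\gamma}_i \to \dot{\gamma}$ in $\qgpd$, $\chi_i \to \chi$ in $\specB$, and $\lambda_i \to \lambda$ in $\mbb{T}$.

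The obstacle is that the normalizers $f_{\dot{\gamma}_i}$ appearing in $\psi_\qsec(\dot{\gamma}_i, \chi_i, \lambda_i) = \llbracket \phi(\tilde{\alpha}_{\dot{\gamma}_i}(\chi_i)), \lambda_i f_{\dot{\gamma}_i}, \phi_{\chi_i} \rrbracket$ need not depend continuously on $i$. To circumvent this, set $\gamma := \qsec(\dot{\gamma})$ and fix the single normalizer $f := f_{\dot{\gamma}}$. Continuity of $\qsec$ gives $\qsec(\dot{\gamma}_i) \to \gamma$, and since $f$ is continuous with $f(\gamma) > 0$ and $\supp'(f)$ is an open neighborhood of $\gamma$, there is an index $i_0$ beyond which $\qsec(\dot{\gamma}_i) \in \supp'(f)$ and $f(\qsec(\dot{\gamma}_i)) > 0$. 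Thus $f$ is an admissible normalizer for $\dot{\gamma}_i$ (i.e.\ $f \in N$, $\dot{\gamma}_i \in q(\supp'(f))$, and $f(\qsec(\dot{\gamma}_i)) > 0$) for all $i \geq i_0$, so Lemma~\ref{lem:psi-surjective} lets me recompute $\psi_\qsec(\dot{\gamma}_i, \chi_i, \lambda_i)$ using $f$ in place of $f_{\dot{\gamma}_i}$. Because $s(\qsec(\dot{\gamma}_i)) = \rho(\chi_i)$ and $f$ is supported on a bisection, $\qsec(\dot{\gamma}_i)$ is the unique element of $\supp'(f) \cap \gpd_{\rho(\chi_i)}$, so Proposition~\ref{prop:from-alpha-to-tilde-alpha}\ref{item:from-alpha-to-tilde-alpha} identifies $\phi(\tilde{\alpha}_{\dot{\gamma}_i}(\chi_i)) = \alpha_f(\phi_{\chi_i})$, giving
\[
    \psi_\qsec(\dot{\gamma}_i, \chi_i, \lambda_i) = \llbracket \alpha_f(\phi_{\chi_i}), \lambda_i f, \phi_{\chi_i} \rrbracket \qquad (i \geq i_0),
\]
and likewise $\psi_\qsec(\dot{\gamma}, \chi, \lambda) = \llbracket \alpha_f(\phi_\chi), \lambda f, \phi_\chi \rrbracket$.

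With a common normalizer in hand, I would finish using the standard description of the twist topology: for a fixed $n \in N(B)$, the map $\dom(n) \times \mbb{T} \to \Sigma_{(A,B)}$, $(x, z) \mapsto \llbracket \alpha_n(x), z n, x \rrbracket$, is a homeomorphism onto an open subset of the twist; equivalently, over the open bisection $\{[\alpha_n(x), n, x] : x \in \dom(n)\}$ of the Weyl groupoid, the twist trivializes with $z$ as the $\mbb{T}$-coordinate. Applying this to $n = f$, it remains only to check that $(\phi_{\chi_i}, \lambda_i) \to (\phi_\chi, \lambda)$ in $\dom(f) \times \mbb{T}$: the first coordinate converges because $\chi_i \to \chi$ and $\phi$ is continuous (Lemma~\ref{lem:phi-continuous}), and the second converges by hypothesis. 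Hence $\psi_\qsec(\dot{\gamma}_i, \chi_i, \lambda_i) \to \psi_\qsec(\dot{\gamma}, \chi, \lambda)$, establishing continuity. The main obstacle, as indicated, is exactly the possible discontinuity of the assignment $\dot{\gamma} \mapsto f_{\dot{\gamma}}$; everything hinges on trading it for a fixed $f$ via the independence-of-choice statement (Lemma~\ref{lem:psi-surjective}) together with the continuity of $\qsec$, after which the argument collapses to convergence in the single local chart $\dom(f) \times \mbb{T}$ of the twist.
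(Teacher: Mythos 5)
Your overall strategy coincides with the paper's: fix the single normalizer $f := f_{\dot{\gamma}}$, use continuity of $\qsec$ to see that $\gamma_i := \qsec(\dot{\gamma}_i)$ eventually lies in $\supp'(f)$, re-express $\psi_{\qsec}(\dot{\gamma}_i,\chi_i,\lambda_i)$ in terms of $f$, and conclude by convergence in the chart over $\dom(f)$ (the paper's basic open sets $\mf{U}(U,V,f)$ from Renault). However, there is a genuine gap at the key step: you assert that $f(\gamma_i) > 0$ for all large $i$, and use this "admissibility" to invoke the independence-of-choice statement (Lemma~\ref{lem:psi-surjective}) so as to write $\psi_{\qsec}(\dot{\gamma}_i,\chi_i,\lambda_i) = \llbracket \alpha_f(\phi_{\chi_i}), \lambda_i f, \phi_{\chi_i}\rrbracket$ with the \emph{same} scalar $\lambda_i$. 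Since $f$ is $\mbb{C}$-valued, continuity of $f$ together with $\gamma_i \to \gamma$ only gives $f(\gamma_i) \to f(\gamma) > 0$: the values are eventually nonzero with argument tending to $0$, but there is no reason they are positive real numbers (when $\qgpd$ is not discrete, $\gamma_i \neq \gamma$ for all $i$, and nothing forces $f$ to take real values near $\gamma$). Consequently your displayed identity is false in general: the $\mbb{T}$-coordinate of $\psi_{\qsec}(\dot{\gamma}_i,\chi_i,\lambda_i)$ in the chart determined by $f$ is not $\lambda_i$ but carries a phase correction coming from $f(\gamma_i)$.

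The repair --- and this is exactly what the paper's proof does --- is to replace the appeal to Lemma~\ref{lem:psi-surjective} by Corollary~\ref{cor:explicit-lambda}, which requires only $\gamma_i \in \supp'(f)$ (no positivity of $f$ at $\gamma_i$) and, using $f_{\dot{\gamma}_i}(\gamma_i) > 0$, produces the correct representative
\[
    \psi_{\qsec}(\dot{\gamma}_i,\chi_i,\lambda_i)
    =
    \llbracket \alpha_f(\phi_{\chi_i}), \kappa_i f, \phi_{\chi_i}\rrbracket,
    \qquad
    \kappa_i = \lambda_i\,\frac{\overline{f(\gamma_i)}}{\abs{f(\gamma_i)}}.
\]
One then needs the additional (easy but essential) observation that $\overline{f(\gamma_i)}/\abs{f(\gamma_i)} \to 1$ because $f(\gamma_i) \to f(\gamma) = \overline{f(\gamma)} > 0$, whence $\kappa_i \to \lambda$; your chart argument then closes the proof. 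So the skeleton of your argument is sound and matches the paper, but the phase bookkeeping you skipped is precisely the nontrivial content of the paper's final step; without it the convergence of the $\mbb{T}$-coordinate is not established.
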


\begin{proof}
  Let $\psi:= \psi_{\qsec } $ and $C := C^\qsec$.
 Assume $x_{i}:=(\dot{\gamma}_{i}, \chi_{i}, \mu_{i})$ is a net in $ ( \qgpd  \ltimes \specB  ) \times_{C} \mbb{T}$ that converges to $x:=(\dot{\gamma}, \chi, \mu)$. If $f:= f_{\dot{\gamma}}$, then a basic open set around $\psi(x)$ (according to \cite[Lemma~4.16]{renault-cartan}) is of the form
    \begin{equation}\label{eq:def:mfU}
        \mf{U}(U,V,f)
        :=
        \Bigl\{
            \llbracket
                \alpha_{f} (\phi_\nu),
                \kappa f, 
                \phi_\nu
            \rrbracket
            \,\vert\,
            \phi_{\nu}\in U, \kappa\in V
        \Bigr\},
    \end{equation}
    where $U\sset\widehat{B}$ is an open neighborhood around $\phi_{\chi}$ and $V\sset\mbb{T}$ an open neighborhood around $\mu$. We need to show that $\psi(x_{i})\in \mf{U}$ for large enough $i$. By definition of $\psi$, we have
    \[
        \psi(x_{i})
        =
        \llbracket
            \alpha_{f_{i}} (\phi_{\chi_{i}}),
            \mu_{i} f_{i}, 
            \phi_{\chi_{i}}
        \rrbracket,
        \quad
        \text{where }f_{i}:=f_{\dot{\gamma}_{i}}.
    \]
     Let $\gamma_{i}:= \qsec (\dot{\gamma}_{i})$ and $\gamma := \qsec (\dot{\gamma})$ be the representatives in the open supports of $f_i$ and $f$ respectively. Note that, since $\supp'(f)$ is an open neighborhood of $\gamma$, it follows from $\dot{\gamma}_{i}\to\dot{\gamma}$ and continuity of $\qsec $ that $\gamma_{i} \in \supp'(f)$ for all $i$ larger than some $i_{1}$. 
    Since we also have $f_i(\gamma_i) > 0$,  Corollary~\ref{cor:explicit-lambda} implies that for     $i\geq i_{1}$,
    \[
        \psi(x_{i})
        =
        \llbracket
            \alpha_f(\phi_{\chi_{i}}),
            \lambda_{i} f, 
            \phi_{\chi_{i}}
        \rrbracket,
        \quad\text{where}\
        \lambda_{i}
        :=
        \frac{\overline{f(\gamma_{i} )} }{\abs{f(\gamma_{i} )}}\frac{ \mu_i f_i(\gamma_{i})}{\abs{ f_i(\gamma_{i})}} = \mu_i \frac{\overline{f(\gamma_{i} )} }{\abs{f(\gamma_{i} )}}.
    \]
    To show that $\psi(x_{i})$ is an element of $\mf{U}(U,V,f)$ for all large enough $i$, we must show that $\lambda_{i}\in V$ and $\phi_{\chi_{i}}\in U$. For the latter, note that as $\chi_{i}\to \chi$ and as $\nu\mapsto\phi_\nu$ is continuous, we know that $\phi_{\chi_{i}}\in U$ for all $i$ larger than some $i_2$. For the former, note that, since $\gamma_{i}\to\gamma$ and $f$ is a continuous function with $f(\gamma)=\overline{f(\gamma)}>0$, we have that $\frac{\overline{f(\gamma_{i})} }
    {\abs{f(\gamma_{i})} }= \lambda_{i}\overline{\mu_{i}}$ converges to $1$. Since $\mu_i\to\mu$ by assumption, this implies that $\lambda_{i}$ is in the neighborhood $V$ of $\mu$ for all large enough $i$.
\end{proof}

\begin{lemma}
    If $\qsec $ is a continuous section, then $\psi_{\qsec }$ is open.
\end{lemma}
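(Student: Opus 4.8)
The plan is to prove that $\psi := \psi_{\qsec}$ is open by showing that its inverse is continuous, which I will do with nets. Since $\psi$ is already a continuous bijection by Lemmata~\ref{lem:psi-surjective}, \ref{lem:top-prin=>psi-injective}, and~\ref{lem:csec-cts=>psi-cts}, this completes the proof that $\psi$ is a homeomorphism, and hence of Theorem~\ref{thm:psi}. Concretely, I would fix a net $x_{i} := (\dot{\gamma}_{i}, \chi_{i}, \mu_{i})$ in $(\qgpd \ltimes \specB) \times_{C} \mbb{T}$ and a point $x := (\dot{\gamma}, \chi, \mu)$ with $\psi(x_{i}) \to \psi(x)$ in $\Sigma_{(A,B)}$, and show that $x_{i} \to x$, i.e.\ that all three coordinates converge.

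First I would recover convergence of the first two coordinates from the Weyl groupoid. The quotient map $\Sigma_{(A,B)} \to \mc{G}_{(A,B)}$ is continuous, and since $\psi$ covers $\varphi$ under the two quotients by $\mbb{T}$ (scaling a normalizer does not change its associated partial homeomorphism), it carries $\psi(x_{i})$ to $\varphi(\dot{\gamma}_{i}, \chi_{i})$ and $\psi(x)$ to $\varphi(\dot{\gamma}, \chi)$. Thus $\varphi(\dot{\gamma}_{i}, \chi_{i}) \to \varphi(\dot{\gamma}, \chi)$, and because $\varphi$ is a homeomorphism (Theorem~\ref{thm:varphi}), applying $\varphi\inv$ yields $(\dot{\gamma}_{i}, \chi_{i}) \to (\dot{\gamma}, \chi)$; in particular $\dot{\gamma}_{i} \to \dot{\gamma}$ and $\chi_{i} \to \chi$.

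The remaining, and most delicate, point is $\mu_{i} \to \mu$. Setting $f := f_{\dot{\gamma}}$, $\gamma := \qsec(\dot{\gamma})$, and $\gamma_{i} := \qsec(\dot{\gamma}_{i})$, continuity of $\qsec$ gives $\gamma_{i} \to \gamma$, so $\gamma_{i} \in \supp'(f)$ eventually; then Corollary~\ref{cor:explicit-lambda} rewrites $\psi(x_{i}) = \llbracket \alpha_{f}(\phi_{\chi_{i}}), \lambda_{i} f, \phi_{\chi_{i}} \rrbracket$ with $\lambda_{i} = \mu_{i}\, \overline{f(\gamma_{i})}/\abs{f(\gamma_{i})}$, while $\psi(x) = \llbracket \alpha_{f}(\phi_{\chi}), \mu f, \phi_{\chi} \rrbracket$. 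Given an arbitrary open $V \ni \mu$, the basic neighborhood $\mf{U}(\widehat{B}, V, f)$ of $\psi(x)$ (in the sense of \cite[Lemma 4.16]{renault-cartan}, as used in Lemma~\ref{lem:csec-cts=>psi-cts}) eventually contains $\psi(x_{i})$. The crux is then to extract from the membership $\psi(x_{i}) \in \mf{U}(\widehat{B}, V, f)$ that $\lambda_{i} \in V$: unwinding the relation $\approx$ defining $\Sigma_{(A,B)}$, an equality $\llbracket \alpha_{f}(\phi_{\nu}), \kappa f, \phi_{\nu} \rrbracket = \llbracket \alpha_{f}(\phi_{\chi_{i}}), \lambda_{i} f, \phi_{\chi_{i}} \rrbracket$ forces $\phi_{\nu} = \phi_{\chi_{i}}$ together with the existence of $b, b' \in B$ satisfying $\phi_{\chi_{i}}(b), \phi_{\chi_{i}}(b') > 0$ and $\kappa f b = \lambda_{i} f b'$. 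Multiplying by $f^{*}$ and applying $\phi_{\chi_{i}}$ — using $\phi_{\chi_{i}}(f^{*} f) > 0$, which holds since $\phi_{\chi_{i}} \in \dom(f)$ by Proposition~\ref{prop:from-alpha-to-tilde-alpha}\ref{item:dom-f} — shows that $\kappa$ and $\lambda_{i}$ differ by a positive real of modulus one, so $\lambda_{i} = \kappa \in V$. This is precisely the freeness of the $\mbb{T}$-action on the twist expressed in coordinates, and it is the step I expect to demand the most care.

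It follows that $\lambda_{i} \to \mu$. Since $\gamma_{i} \to \gamma$ and $f$ is continuous with $f(\gamma) > 0$, the factor $\overline{f(\gamma_{i})}/\abs{f(\gamma_{i})} \to 1$, whence $\mu_{i} = \lambda_{i}\, \abs{f(\gamma_{i})}/\overline{f(\gamma_{i})} \to \mu$. Combining this with the convergence of the first two coordinates gives $x_{i} \to x$, so $\psi\inv$ is continuous and $\psi$ is open, as desired.
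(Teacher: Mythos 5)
Your proof is correct, and it differs from the paper's in one substantive way. For the first two coordinates, the paper stays inside the twist: from $\psi(x_{i})\in\mf{U}(U,V,f)$ it reads off $\phi_{\chi_{i}}\to\phi_{\chi}$ (so $\chi_{i}\to\chi$ by Lemma~\ref{lem:phi-open}), and then recovers $\dot{\gamma}_{i}\to\dot{\gamma}$ by hand: Lemma~\ref{lem:3-implies-2}\ref{item:Weyl-twist-implies-points} shows that the unique $\tau_{i}\in\supp'(f)$ with source $\rho(\chi_{i})$ satisfies $\dot{\tau}_{i}=\dot{\gamma}_{i}$, and since $s$ restricts to a homeomorphism on the bisection $\supp'(f)$, one gets $\tau_{i}\to\gamma$. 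You instead factor through the Weyl groupoid: because rescaling a normalizer leaves $\alpha_{n}$ unchanged, the projection $\Sigma_{(A,B)}\to\mc{G}_{(A,B)}$ intertwines $\psi$ with $\varphi$, so its continuity together with Theorem~\ref{thm:varphi} yields $(\dot{\gamma}_{i},\chi_{i})\to(\dot{\gamma},\chi)$ at once. That is shorter and a legitimate reuse of Theorem~\ref{thm:varphi}; its only cost is leaning on continuity of the projection from the twist to the Weyl groupoid, which is part of Renault's twist structure \cite{renault-cartan} but is a fact the paper's own proof never needs. The scalar step of your argument is essentially the paper's: both pass to the common normalizer $f=f_{\dot{\gamma}}$ via Corollary~\ref{cor:explicit-lambda} (using continuity of $\qsec$ so that $\gamma_{i}\in\supp'(f)$ eventually) and then compare the two scalars attached to $f$. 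Your inline verification that equal twist elements over the same $f$ and the same character force equal scalars (multiply by $f^{*}$, evaluate at $\phi_{\chi_{i}}$, use $\phi_{\chi_{i}}(f^{*}f)>0$) is correct; note that it is word-for-word the computation in the injectivity Lemma~\ref{lem:top-prin=>psi-injective}, which you could simply have cited, and that the paper itself tacitly relies on the same uniqueness when it identifies $\kappa_{i}$ with the value supplied by Corollary~\ref{cor:explicit-lambda}.
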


\begin{proof}
    Let $\psi:= \psi_{\qsec }.$
    Since $\psi$ is bijective by Lemmata~\ref{lem:psi-surjective} and~\ref{lem:top-prin=>psi-injective}, we may show that $\psi\inv$ is continuous. Assume that we have a convergent net
    $(Y_i)_{i\in \Lambda}$
    in $\Sigma_{(A,B)}$. 
    By Proposition~\ref{prop:Weyl-twist-elt-description}, we may write
    \[
        Y_{i}
        :=
        \llbracket
            \alpha_{f_{\dot{\gamma}_{i}}} (\phi_{\chi_{i}}),
            \lambda_{i} f_{\dot{\gamma}_{i}},
            \phi_{\chi_{i}}
        \rrbracket
        \to
        \llbracket
            \alpha_{f_{\dot{\gamma}}} (\phi_{\chi}),
            \lambda f_{\dot{\gamma}},
            \phi_{\chi}
        \rrbracket
        ,
        \quad i\in\Lambda,
    \]
    where 
    $f_{\dot \gamma},f_{\dot \gamma_i}\in N$.
Let us write $f:= f_{\dot{\gamma}}$ and $f_{i}:= f_{\dot{\gamma}_{i}}$. We have to show that $(\dot{\gamma}_{i}, \chi_{i}, \lambda_{i})$ converges to $(\dot{\gamma}, \chi, \lambda)$ in $ ( \qgpd  \ltimes \specB  ) \times\mbb{T}$.

    Convergence in $\Sigma_{(A,B)}$ implies that for any open neighborhood $U \subseteq \dom(f) \subseteq \widehat{B}$ of $\phi_\chi$ and for any open neighborhood $V \subseteq \mbb{T}$ of $\lambda$, there exists $j_{U,V}$ such that $Y_i \in \mf{U}(U,V,f)$ for $i \geq j_{U,V}$ (see Equation~\eqref{eq:def:mfU} for the definition of $\mf{U}$). In particular, for each $i \geq j_{U,V}$, we have $\phi_{\chi_i}\in U$ and
    \begin{equation}\label{eq:Yi-new}
        Y_{i}
        =
        \llbracket
            \alpha_{f} (\phi_{\chi_i}),
            \kappa_{i} f,
            \phi_{\chi_i}
        \rrbracket
    \end{equation}
    for some $\kappa_i\in V$. Thus $\phi_{\chi_i} \to \phi_\chi$, so $\chi_i \to \chi$ by Lemma \ref{lem:phi-open}. Because of Equation~\eqref{eq:Yi-new}, we can use Lemma~\ref{lem:3-implies-2}\ref{item:Weyl-twist-implies-points} to conclude that, if $i \geq j_{U,V}$,
    \begin{equation}\label{eq:final-ineq}
        \overline{\kappa_{i}}\lambda_{i}\,
        \phi_{\chi_{i}} \bigl(\Phi(f^{*}f_{i})\bigr)
        =
        \phi_{\chi_{i}} \bigl(\Phi((\kappa_{i}f)^{*}\,(\lambda_{i}f_{i}))\bigr)
        >0.
    \end{equation}

  By Proposition~\ref{prop:from-alpha-to-tilde-alpha}\ref{item:dom-f}, there exists a unique element of $\supp'(f)$ with source $\rho(\chi_i)$, which we will denote by $\tau_i$.
Let $\gamma_i:=\qsec(\dot{\gamma_i})$.
It follows from the 
fact that $f, f_i$ are
supported in bisections
 that $\tau_i\inv \gamma_i$
must be in $\agpd $.
In other words, $\dot{\tau}_i=\dot{\gamma}_i$.

    We have already shown that $\phi_{\chi_{i}}\to\phi_{\chi}$, so we know that $s(\tau_{i})=\rho(\chi_{i})\to \rho(\chi)=s(\dot{\gamma})$.
    Since $\tau_{i}$ and $\gamma:=\qsec (\dot{\gamma})$ are elements of the bisection $\supp'(f)$ where $s$ restricts to a homeomorphism, we conclude that $\tau_{i}\to \gamma.$ In particular 
    $\dot{\gamma}_{i} = \dot{\tau}_{i} \to \dot{\gamma}.$

    It remains to show that $\lambda_{i}\to \lambda.$         Since $\qsec $ is continuous, we know that $\dot{\gamma}_i\to\dot{\gamma}$ implies $\gamma_{i}=\qsec (\dot{\gamma}_i)\to \qsec (\dot{\gamma})=\gamma$. As $\supp'(f)$ is an open neighborhood of $\gamma$, there exists $i_{1}$ so that $\gamma_{i}\in \supp'(f)$ for $i\geq i_{1}$; to be precise, $\gamma_{i}=\tau_{i}$.  As $\gamma_i \in \supp'(f) \cap \supp'(f_i)$ for such $i$, the fact that  $f, f_i \in N$ enables us to invoke Corollary~\ref{cor:explicit-lambda}.  We conclude that
    \[
        \kappa_i
        =\frac{\overline{f(\gamma_i )} }{\abs{f(\gamma_i )}}\frac{ \lambda_i f_{\dot{\gamma}_i}(\gamma_i)}{\abs{ f_{\dot{\gamma}_i}(\gamma_i)}}
        =
        \frac{\overline{f(\gamma_i )} }{\abs{f(\gamma_i )}}\lambda_i
        .
    \]
Since $\gamma_{i}=\tau_{i}\to \gamma$ and the continuous function $f$ satisfies $f(\gamma)=f_{\dot{\gamma}}(\qsec (\dot{\gamma}))>0$, it follows that $\frac{f(\gamma_{i})}{\abs{f(\gamma_{i})}}$ converges to $1$. Consequently, $\overline{\kappa_{i}}\lambda_{i}\to 1$. As $\kappa_{i}$ converges to $\lambda$ since $\kappa_i\in V$ for all $i\geq j_{\dom(f),V}$, we have that $\lambda_{i}= \kappa_{i}(\overline{\kappa_{i}}\lambda_{i})$ converges to $\lambda$ also. 
    This finishes our proof.
\end{proof}

\begin{rmk}
    Note that, according to Theorem~\ref{thm:psi}, any choice of section $\qsec $ gives a $\mbb{T}$-equivariant isomorphism $\psi_{\qsec }$ of the {\em discrete} groupoids $( (\qgpd  \ltimes \specB  ) \times_{C^{\qsec }} \mbb{T})^{d}$ and $\Sigma_{(A,B)}^{d}$. In particular, we can let $\mc{E}_{\qsec }$ be the groupoid $ (\qgpd  \ltimes \specB  ) \times_{C^{\qsec }}\mbb{T}$ equipped with the initial topology with respect to $\psi_{\qsec }$, i.e.\ the coarsest topology that makes $\psi_{\qsec }$ continuous.
    This procedure makes $\psi_{\qsec }$ a $\mbb{T}$-equivariant isomorphism between the {\em topological} groupoids $\mc{E}_{\qsec }$ and $\Sigma_{(A,B)}$. If $\qsec $ is continuous, Theorem~\ref{thm:psi} establishes that $\mc{E}_{\qsec }$ is exactly $ (\qgpd  \ltimes \specB  ) \times_{C^{\qsec }} \mbb{T}$ with its usual topology.
\end{rmk}

\pagebreak[3]\section{Appendix}

\begin{lemma}\label{lem:main-cocycle-claim-for-C}
    Equation~\eqref{eq:main-cocycle-claim-for-C} holds.
\end{lemma}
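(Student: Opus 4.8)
The plan is to treat this as a pure identity among $\mathbb{T}$-values and verify it using nothing but the $2$-cocycle condition (Equation~\eqref{eq:cocycle}) together with the defining relations of the elements involved; no algebra structure is needed at this stage, since the $\chi$-factors and the positive factors have already been cleared in the passage from Equation~\eqref{eq:cocycle-claim2} to Equation~\eqref{eq:claim3}. First I would record the relevant relations. Because each $x_i$ lies in the abelian subgroupoid $\agpd_{\rho(\chi)}$, we have $\gamma_{123} x_1 = \gamma_1 \gamma_{23}$, $\gamma_{23} x_2 = \gamma_2 \gamma_3$, $\gamma_{123} x_3 = \gamma_{12}\gamma_3$, and $\gamma_3 x_4 = \gamma_{12}\inv \gamma_1 \gamma_2 \gamma_3$, together with the key relation $x_1 x_2 = x_3 x_4 = \gamma_{123}\inv \gamma_1 \gamma_2 \gamma_3$ noted in the main text. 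I would also recall that $c$ is symmetric on $\agpd$ and that $c(\eta, \eta\inv) = c(\eta\inv, \eta)$ for every $\eta\in\gpd$ by \cite[Lemma 2.1]{DGNRW:Cartan}.

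The conceptual reason the identity holds is associativity: the equation is an instance of the fact that the total cocycle phase accumulated in forming the element $\gamma_{123}\inv\gamma_1\gamma_2\gamma_3$ from its atomic pieces is independent of how the product is bracketed, which is exactly the content of the $2$-cocycle condition. The left-hand side realizes this via the intermediate section value $\gamma_{12}$, while the right-hand side uses $\gamma_{23}$. I would use this picture to organize the calculation: rewrite every cocycle value whose arguments are compound words, such as $c(x_1\gamma_{23}\inv, \gamma_{23})$, $c(x_4\gamma_3\inv, \gamma_3)$, $c(\gamma_3\inv\gamma_{12}\inv, \gamma_1)$, and $c(\gamma_{123}\inv, \gamma_{12})$, in terms of cocycle values on the ``atomic'' pairs built from $\gamma_1,\gamma_2,\gamma_3,\gamma_{12},\gamma_{23},\gamma_{123}$ and their inverses. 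The only moves required are the two solved forms of Equation~\eqref{eq:cocycle},
\[ c(\alpha\beta, \delta) = c(\alpha, \beta\delta)\, c(\beta,\delta)\, \overline{c(\alpha,\beta)} \quad\text{and}\quad c(\alpha, \beta\delta) = c(\alpha\beta,\delta)\, c(\alpha,\beta)\, \overline{c(\beta,\delta)}, \]
each of which peels one generator off a compound argument.

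Carrying this out, I would substitute $\gamma_{123}\inv\gamma_1\gamma_{23}$ for $x_1$ and similarly for $x_2,x_3,x_4$, so that every cocycle argument becomes a word in the six generators, and then normalize each side by always splitting as $c(\text{prefix}, \text{final generator})$ until no compound first arguments remain. The two terms $c(x_1,x_2)$ and $c(x_3,x_4)$ are handled using symmetry of $c$ on $\agpd$ and the common value $x_1 x_2 = x_3 x_4$; with these in hand, both sides should collapse to the same product of atomic cocycle values, which is the phase of forming $\gamma_{123}\inv\gamma_1\gamma_2\gamma_3$. This yields the desired equality.

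I expect the main obstacle to be bookkeeping rather than any genuine difficulty: there are a dozen or so factors across the two sides, and each application of the cocycle identity spawns new terms, so the computation must be carried out in a disciplined order, for instance processing the longest arguments first and recording cancellations immediately, to keep it finite and error-free. The associativity interpretation is the safeguard here: it tells me precisely which normal form to aim for and guarantees in advance that no true obstruction can appear, so the calculation is certain to close.
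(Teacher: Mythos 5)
Your conceptual diagnosis of Equation~\eqref{eq:main-cocycle-claim-for-C} is the right one, but as written your proposal has a genuine gap: the verification is never performed, and the principle you invoke to guarantee that it ``is certain to close'' is both unproven and, as stated, inapplicable. The two sides of \eqref{eq:main-cocycle-claim-for-C} are \emph{not} two bracketings of one word in a fixed set of atomic pieces: the left side is assembled from the letters $\gamma_{123}\inv,\gamma_1,\gamma_{23},\gamma_{23}\inv,\gamma_2,\gamma_3$ and the right side from $\gamma_{123}\inv,\gamma_{12},\gamma_3,\gamma_3\inv,\gamma_{12}\inv,\gamma_1,\gamma_2,\gamma_3$ (incidentally you have the two sides interchanged: in \eqref{eq:main-cocycle-claim-for-C} it is the left-hand side that goes through $\gamma_{23}$ and the right-hand side through $\gamma_{12}$). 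The two words agree only after cancelling inserted inverse pairs, and those cancellations are exactly what produces the factors $\overline{c(\gamma_{23}\inv,\gamma_{23})}$, $\overline{c(\gamma_3\inv,\gamma_3)}$, $\overline{c(\gamma_{12}\inv,\gamma_{12})}$; bracketing-independence alone says nothing about them. Your rewriting scheme also cannot do what you claim: the cocycle identity trades a compound first argument for a compound second argument, so ``splitting until no compound first arguments remain'' never reaches cocycle values on atomic pairs, and no procedure can --- already for $\gpd=\mathbb{Z}$, writing $c$ as a coboundary $c(a^m,a^n)=\lambda_m\lambda_n\overline{\lambda_{m+n}}$ shows that $c(a^2,a)$ involves the free parameter $\lambda_3$, which no product of cocycle values on $\{a^{\pm 1}\}$ can produce. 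Nor is the weaker rewriting (atomic first arguments only) confluent across the different word representations of the same groupoid element; that both sides reach a common form is precisely the assertion to be proved. Finally, symmetry of $c$ on $\agpd$ cannot ``handle'' $c(x_1,x_2)$ versus $c(x_3,x_4)$: symmetry relates $c(x_1,x_2)$ to $c(x_2,x_1)$, and $(x_3,x_4)\neq(x_1,x_2)$ even though $x_3x_4=x_1x_2$; the paper's proof in fact notes at its end that only $c(\eta,\eta\inv)=c(\eta\inv,\eta)$ is ever needed, never symmetry.

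The paper closes this gap by brute force: a long, carefully ordered chain of applications of Equation~\eqref{eq:cocycle} and cancellations --- essentially your plan actually carried out, which is why the Appendix exists. If you want a proof that realizes your associativity intuition honestly, the correct vehicle is the extension groupoid $\gpd\times_c\mathbb{T}$, whose multiplication $(\eta,\lambda)(\zeta,\mu)=(\eta\zeta,\lambda\mu\,c(\eta,\zeta))$ is associative \emph{because} $c$ is a cocycle, and in which $(\eta,1)\inv=(\eta\inv,\overline{c(\eta,\eta\inv)})$. Writing $\bar{\eta}:=(\eta,1)$, a short direct computation (using $c(\eta,\eta\inv)=c(\eta\inv,\eta)$ from \cite[Lemma 2.1]{DGNRW:Cartan} to match conjugate factors) shows that the left-hand side of \eqref{eq:main-cocycle-claim-for-C} equals $c(\gamma_{123},\gamma_{123}\inv)$ times the $\mathbb{T}$-component of $\bigl(\bar{\gamma}_{123}\inv\bar{\gamma}_1\bar{\gamma}_{23}\bigr)\bigl(\bar{\gamma}_{23}\inv\bar{\gamma}_2\bar{\gamma}_3\bigr)$, while the right-hand side equals $c(\gamma_{123},\gamma_{123}\inv)$ times the $\mathbb{T}$-component of $\bigl(\bar{\gamma}_{123}\inv\bar{\gamma}_{12}\bar{\gamma}_3\bigr)\bigl(\bar{\gamma}_3\inv\bar{\gamma}_{12}\inv\bar{\gamma}_1\bar{\gamma}_2\bar{\gamma}_3\bigr)$; both products collapse, by associativity in $\gpd\times_c\mathbb{T}$ and cancellation of $\bar{\gamma}\bar{\gamma}\inv$, to $\bar{\gamma}_{123}\inv\bar{\gamma}_1\bar{\gamma}_2\bar{\gamma}_3$, so the two phases coincide. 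That argument is complete and shorter than the Appendix, but note that it is the groupoid structure of $\gpd\times_c\mathbb{T}$ --- not bracketing coherence plus an unperformed normalization --- that does the work.
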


\begin{proof}
Let $x:=\gamma_2\inv \gamma_1\inv \gamma_{12}, y:= \gamma_3\inv \gamma_2\inv \gamma_{23}, z:= \gamma_3\inv\gamma_2\inv \gamma_1\inv \gamma_{123}$. With these identifications, Equation \eqref{eq:main-cocycle-claim-for-C} becomes
\allowdisplaybreaks
\begin{align*}
 &
	c( z\inv y , y\inv )\,
	c( z\inv \gamma_{3}\inv \gamma_{2}\inv \gamma_{1}\inv, \gamma_{1})\,
	c( z\inv y y\inv \gamma_{3}\inv \gamma_{2}\inv, \gamma_{2} \gamma_{3} y )
	\\
	&\overline{c( y\inv \gamma_{3}\inv \gamma_{2}\inv, \gamma_{2} \gamma_{3} y )}\,
	c( y\inv \gamma_{3}\inv \gamma_{2}\inv, \gamma_{2})\,
	c( y\inv \gamma_{3}\inv, \gamma_{3})
	\\
	\overset{!}{=}\,
	&
	c( z\inv \gamma_{3}\inv x \gamma_{3}, \gamma_{3}\inv x\inv \gamma_{3})\,
	c( z\inv \gamma_{3}\inv \gamma_{2}\inv \gamma_{1}\inv, \gamma_{1} \gamma_{2} x )\,
	c( z\inv \gamma_{3}\inv x \gamma_{3} \gamma_{3}\inv, \gamma_{3})
	\\
	&\overline{c( \gamma_{3}\inv, \gamma_{3})}\,
	\overline{c( x\inv \gamma_{2}\inv \gamma_{1}\inv, \gamma_{1} \gamma_{2} x )}\,
	c( \gamma_{3}\inv, x\inv \gamma_{2}\inv \gamma_{1}\inv)
	\\
	&
	c( \gamma_{3}\inv x\inv \gamma_{2}\inv \gamma_{1}\inv, \gamma_{1})\,
	c( \gamma_{3}\inv x\inv \gamma_{3} \gamma_{3}\inv \gamma_{2}\inv, \gamma_{2})\,
	c( \gamma_{3}\inv x\inv \gamma_{3} \gamma_{3}\inv, \gamma_{3})
	\\\iff &\qquad \text{(bring conjugates to the other side)}\\
	&
 	c( z\inv y , y\inv )\,
 	c( z\inv \gamma_{3}\inv \gamma_{2}\inv \gamma_{1}\inv, \gamma_{1})\,
 	c( z\inv \gamma_{3}\inv \gamma_{2}\inv, \gamma_{2} \gamma_{3} y )
 	\\
 	&
 	c( x\inv \gamma_{2}\inv \gamma_{1}\inv, \gamma_{1} \gamma_{2} x )\,
 	\uwave{
 		c( y\inv \gamma_{3}\inv \gamma_{2}\inv, \gamma_{2})\,
 		c( y\inv \gamma_{3}\inv, \gamma_{3})\,
 	}
 	c( \gamma_{3}\inv, \gamma_{3})
 	\\
 	&\qquad \text{(use cocycle condition, Equation~\eqref{eq:cocycle}, on underlined factors)}
 	\\
	=&
 	c( z\inv y , y\inv )\,
 	c( z\inv \gamma_{3}\inv \gamma_{2}\inv \gamma_{1}\inv, \gamma_{1})\,
 	c( z\inv \gamma_{3}\inv \gamma_{2}\inv, \gamma_{2} \gamma_{3} y )
 	\\
 	&
 	c( x\inv \gamma_{2}\inv \gamma_{1}\inv, \gamma_{1} \gamma_{2} x )\,
 	{
 		c(y\inv \gamma_{3}\inv \gamma_{2}\inv,\gamma_{2}\gamma_{3})\,
 		c(\gamma_{2},\gamma_{3})
 	}
 	c( \gamma_{3}\inv, \gamma_{3})
 	\\
 	\overset{!}{=}\,
 	&
 	c( z\inv \gamma_{3}\inv x \gamma_{3}, \gamma_{3}\inv x\inv \gamma_{3})\,
 	c( z\inv \gamma_{3}\inv \gamma_{2}\inv \gamma_{1}\inv, \gamma_{1} \gamma_{2} x )\,
 	c( z\inv \gamma_{3}\inv x  , \gamma_{3})
 	\\
 	&
 	c( y\inv \gamma_{3}\inv \gamma_{2}\inv, \gamma_{2} \gamma_{3} y )\,
 	c( \gamma_{3}\inv, x\inv \gamma_{2}\inv \gamma_{1}\inv)
 	\\
	&
	c( \gamma_{3}\inv x\inv \gamma_{2}\inv \gamma_{1}\inv, \gamma_{1})\,
	\uwave{c( \gamma_{3}\inv x\inv \gamma_{2}\inv, \gamma_{2})\,
	c( \gamma_{3}\inv x\inv , \gamma_{3})} 
	\\
 	&\qquad \text{(use cocycle condition  on underlined factors)}
 	\\
	=&
 	c( z\inv \gamma_{3}\inv x \gamma_{3}, \gamma_{3}\inv x\inv \gamma_{3})\,
 	c( z\inv \gamma_{3}\inv \gamma_{2}\inv \gamma_{1}\inv, \gamma_{1} \gamma_{2} x )\,
 	c( z\inv \gamma_{3}\inv x  , \gamma_{3})
 	\\
 	&
 	c( y\inv \gamma_{3}\inv \gamma_{2}\inv, \gamma_{2} \gamma_{3} y )\,
 	c( \gamma_{3}\inv, x\inv \gamma_{2}\inv \gamma_{1}\inv)
 	\\
	&
	\uwave{
		c( \gamma_{3}\inv x\inv \gamma_{2}\inv \gamma_{1}\inv, \gamma_{1})\,
		{c(\gamma_{3}\inv x\inv \gamma_{2}\inv,\gamma_{2}\gamma_{3})}
	} {c(\gamma_2,\gamma_3)}
	\\
 	&\qquad \text{(use cocycle condition  on underlined factors)}
 	\\
	=&
 	c( z\inv \gamma_{3}\inv x \gamma_{3}, \gamma_{3}\inv x\inv \gamma_{3})\,
 	c( z\inv \gamma_{3}\inv \gamma_{2}\inv \gamma_{1}\inv, \gamma_{1} \gamma_{2} x )\,
 	c( z\inv \gamma_{3}\inv x  , \gamma_{3})
 	\\
 	&
 	c( y\inv \gamma_{3}\inv \gamma_{2}\inv, \gamma_{2} \gamma_{3} y )\,
	\uwave{c( \gamma_{3}\inv, x\inv \gamma_{2}\inv \gamma_{1}\inv)}
	\\
	&
	\uwave{c(\gamma_{3}\inv x\inv \gamma_{2}\inv \gamma_{1}\inv, \gamma_{1} \gamma_{2}\gamma_{3})\,}
		c(\gamma_{1}, \gamma_{2}\gamma_{3})
	c(\gamma_{2},\gamma_{3})	
	\\
	&\qquad \text{(use cocycle condition  on underlined factors)}
 	\\
 	=&
 	c( z\inv \gamma_{3}\inv x \gamma_{3}, \gamma_{3}\inv x\inv \gamma_{3})\,
 	c( z\inv \gamma_{3}\inv \gamma_{2}\inv \gamma_{1}\inv, \gamma_{1} \gamma_{2} x )\,
 	c( z\inv \gamma_{3}\inv x  , \gamma_{3})
 	\\
 	&
 	c( y\inv \gamma_{3}\inv \gamma_{2}\inv, \gamma_{2} \gamma_{3} y )\,
	{c(\gamma_{3}\inv,x\inv \gamma_{3})}
	\\
	&
	{c(x\inv \gamma_{2}\inv \gamma_{1}\inv,\gamma_{1} \gamma_{2}\gamma_{3})\,}
	c(\gamma_{1}, \gamma_{2}\gamma_{3})
	c(\gamma_{2},\gamma_{3})
\end{align*}
\begin{align*}
\iff&\qquad\text{(cancel $c(\gamma_{2},\gamma_{3})$ on both sides)}\\
	&
 	c( z\inv y , y\inv )\,
 	c( z\inv \gamma_{3}\inv \gamma_{2}\inv \gamma_{1}\inv, \gamma_{1})\,
 	c( z\inv \gamma_{3}\inv \gamma_{2}\inv, \gamma_{2} \gamma_{3} y )
 	\\
 	&
 	c( x\inv \gamma_{2}\inv \gamma_{1}\inv, \gamma_{1} \gamma_{2} x )\,
	c(y\inv \gamma_{3}\inv \gamma_{2}\inv,\gamma_{2}\gamma_{3})\,
  	c( \gamma_{3}\inv, \gamma_{3})
 	\\
 	\overset{!}{=}\,
 	&
 	\uwave{c( z\inv \gamma_{3}\inv x \gamma_{3}, \gamma_{3}\inv x\inv \gamma_{3})\,}
 	c( z\inv \gamma_{3}\inv \gamma_{2}\inv \gamma_{1}\inv, \gamma_{1} \gamma_{2} x )\,
 	c( z\inv \gamma_{3}\inv x  , \gamma_{3})
 	\\
 	&
 	c( y\inv \gamma_{3}\inv \gamma_{2}\inv, \gamma_{2} \gamma_{3} y )\,
	\uwave{c(\gamma_{3}\inv,x\inv \gamma_{3})}
	\\
	&
	c(x\inv \gamma_{2}\inv \gamma_{1}\inv,\gamma_{1} \gamma_{2}\gamma_{3})\,
	c(\gamma_{1}, \gamma_{2}\gamma_{3})\,
	\\
	&\qquad \text{(use cocycle condition  on underlined factors)}
 	\\
 	=
 	&
 	\uwave{c(z\inv \gamma_{3}\inv x \gamma_{3},\gamma_{3}\inv)\,}
 	c( z\inv \gamma_{3}\inv \gamma_{2}\inv \gamma_{1}\inv, \gamma_{1} \gamma_{2} x )\,
 	\uwave{c( z\inv \gamma_{3}\inv x  , \gamma_{3})}
 	\\
 	&
 	c( y\inv \gamma_{3}\inv \gamma_{2}\inv, \gamma_{2} \gamma_{3} y )\,
	{c(z\inv \gamma_{3}\inv x ,x\inv \gamma_{3})}
	\\
	&
	c(x\inv \gamma_{2}\inv \gamma_{1}\inv,\gamma_{1} \gamma_{2}\gamma_{3})\,
	c(\gamma_{1}, \gamma_{2}\gamma_{3})\,
	\\&\qquad \text{(use cocycle condition  on underlined factors)}
 	\\
 	=
 	&
 	c( z\inv \gamma_{3}\inv \gamma_{2}\inv \gamma_{1}\inv, \gamma_{1} \gamma_{2} x )\,
	{ c(\gamma_{3}\inv,\gamma_{3})}
 	\\
 	&
 	c( y\inv \gamma_{3}\inv \gamma_{2}\inv, \gamma_{2} \gamma_{3} y )\,
	c(z\inv \gamma_{3}\inv x ,x\inv \gamma_{3})
	\\
	&
	c(x\inv \gamma_{2}\inv \gamma_{1}\inv,\gamma_{1} \gamma_{2}\gamma_{3})\,
	c(\gamma_{1}, \gamma_{2}\gamma_{3})
	\\\iff&\qquad \text{(cancel $c(\gamma_3\inv,\gamma_3)$ on both sides)}
 	\\\
	&
 	c( z\inv y , y\inv )\,
 	c( z\inv \gamma_{3}\inv \gamma_{2}\inv \gamma_{1}\inv, \gamma_{1})\,
 	\uwave{c( z\inv \gamma_{3}\inv \gamma_{2}\inv, \gamma_{2} \gamma_{3} y )}
 	\\
 	&
 	c( x\inv \gamma_{2}\inv \gamma_{1}\inv, \gamma_{1} \gamma_{2} x )\,
	c(y\inv \gamma_{3}\inv \gamma_{2}\inv,\gamma_{2}\gamma_{3})\,
	\\&\qquad \text{(use cocycle condition  on underlined factor)}
 	\\
	=&
	\uwave{	c( z\inv y , y\inv )}\,
 	c( z\inv \gamma_{3}\inv \gamma_{2}\inv \gamma_{1}\inv, \gamma_{1})\,
 	{\uwave{c(z\inv,y)}\,
 	c(z\inv \gamma_{3}\inv \gamma_{2}\inv,\gamma_{2} \gamma_{3})\,
 	\overline{c(\gamma_{2} \gamma_{3},y)}
 	}
 	\\
 	&
 	c( x\inv \gamma_{2}\inv \gamma_{1}\inv, \gamma_{1} \gamma_{2} x )\,
	c(y\inv \gamma_{3}\inv \gamma_{2}\inv,\gamma_{2}\gamma_{3})\,
 	\\&\qquad \text{(use cocycle condition on underlined factors)}
 	\\
 		=&
		{\uwave{c(y,y\inv)}\,}
 	 	c( z\inv \gamma_{3}\inv \gamma_{2}\inv \gamma_{1}\inv, \gamma_{1})\,
 	 	c(z\inv \gamma_{3}\inv \gamma_{2}\inv,\gamma_{2} \gamma_{3})\,
 	 	\uwave{\overline{c(\gamma_{2} \gamma_{3},y)}}
 	 	\\
 	 	&
 	 	c( x\inv \gamma_{2}\inv \gamma_{1}\inv, \gamma_{1} \gamma_{2} x )\,
 		\uwave{c(y\inv \gamma_{3}\inv \gamma_{2}\inv,\gamma_{2}\gamma_{3})}
 	 	\\
	\overset{!}{=}&\,
 	c( z\inv \gamma_{3}\inv \gamma_{2}\inv \gamma_{1}\inv, \gamma_{1} \gamma_{2} x )
 	\uwave{c( y\inv \gamma_{3}\inv \gamma_{2}\inv, \gamma_{2} \gamma_{3} y )}\,
 	\,
 	\\
 	&
	c(z\inv \gamma_{3}\inv x ,x\inv \gamma_{3})
	c(x\inv \gamma_{2}\inv \gamma_{1}\inv,\gamma_{1} \gamma_{2}\gamma_{3})\,
	c(\gamma_{1}, \gamma_{2}\gamma_{3})
\end{align*}
\begin{align*}
	\iff&\qquad \text{(use cocycle condition  to cancel underlined factors)}
 	\\
		&\uwave{c( z\inv \gamma_{3}\inv \gamma_{2}\inv \gamma_{1}\inv, \gamma_{1})\,
 	 	c(z\inv \gamma_{3}\inv \gamma_{2}\inv,\gamma_{2} \gamma_{3})\,}
 	 	c( x\inv \gamma_{2}\inv \gamma_{1}\inv, \gamma_{1} \gamma_{2} x )\,
 	\\&\qquad \text{(use cocycle condition  on underlined factors)}\\
 	=&
	{c(z\inv \gamma_{3}\inv \gamma_{2}\inv \gamma_{1}\inv,\gamma_{1}\gamma_{2} \gamma_{3})\,
 		{c(\gamma_{1},\gamma_{2} \gamma_{3})}
 		\,}
 	c( x\inv \gamma_{2}\inv \gamma_{1}\inv, \gamma_{1} \gamma_{2} x )\,
 	\\
	\overset{!}{=}&\,
 	c( z\inv \gamma_{3}\inv \gamma_{2}\inv \gamma_{1}\inv, \gamma_{1} \gamma_{2} x )\,
	c(z\inv \gamma_{3}\inv x ,x\inv \gamma_{3})
	c(x\inv \gamma_{2}\inv \gamma_{1}\inv,\gamma_{1} \gamma_{2}\gamma_{3})\,
	{c(\gamma_{1}, \gamma_{2}\gamma_{3})}
	\\
	\\\iff&\qquad \text{(cancel $c(\gamma_{1}, \gamma_{2}\gamma_{3})$ on both sides)}\\
	&
	{c(z\inv \gamma_{3}\inv \gamma_{2}\inv \gamma_{1}\inv,\gamma_{1}\gamma_{2} \gamma_{3})\,}
 	c( x\inv \gamma_{2}\inv \gamma_{1}\inv, \gamma_{1} \gamma_{2} x )\,
	\\
		\overset{!}{=}&\,
	 	c( z\inv \gamma_{3}\inv \gamma_{2}\inv \gamma_{1}\inv, \gamma_{1} \gamma_{2} x )
	  	\\
	 	&
		\uwave{c(z\inv \gamma_{3}\inv x ,x\inv \gamma_{3})
		c(x\inv \gamma_{2}\inv \gamma_{1}\inv,\gamma_{1} \gamma_{2}\gamma_{3})\,}
	\\&\qquad \text{(use cocycle condition  on underlined factors)}\\
	=&\,
 	c( z\inv \gamma_{3}\inv \gamma_{2}\inv \gamma_{1}\inv, \gamma_{1} \gamma_{2} x )
  	\\
 	&
	{{c(z\inv \gamma_{3}\inv \gamma_{2}\inv \gamma_{1}\inv,\gamma_{1} \gamma_{2}\gamma_{3})\,}
		c(z\inv \gamma_{3}\inv x,x\inv \gamma_{2}\inv \gamma_{1}\inv)
	\,}
		\\\iff&\qquad \text{(cancel $c(z\inv \gamma_{3}\inv \gamma_{2}\inv \gamma_{1}\inv,\gamma_{1} \gamma_{2}\gamma_{3})$ on both sides)}\\
		&
	 	c( x\inv \gamma_{2}\inv \gamma_{1}\inv, \gamma_{1} \gamma_{2} x )
	 	\\&\qquad \text{(use cocycle condition )}\\
	 	=&
	 	c(x\inv, x)\,
	 	c(\gamma_{2}\inv \gamma_{1}\inv,\gamma_{1} \gamma_{2} x)\,
	 	\overline{c(x\inv,\gamma_{2}\inv \gamma_{1}\inv)}
		\\
	\overset{!}{=}&\,
	 	c( z\inv \gamma_{3}\inv \gamma_{2}\inv \gamma_{1}\inv, \gamma_{1} \gamma_{2} x )\,
		c(z\inv \gamma_{3}\inv x,x\inv \gamma_{2}\inv \gamma_{1}\inv)
	\\\iff&\qquad \text{(bring conjugate to other side)}\\
	&
 	c(x\inv, x)\,
 	{c(\gamma_{2}\inv \gamma_{1}\inv,\gamma_{1} \gamma_{2} x)\,}
	\\
\overset{!}{=}&\,
 	\uwave{c( z\inv \gamma_{3}\inv \gamma_{2}\inv \gamma_{1}\inv, \gamma_{1} \gamma_{2} x )\,}
	\dotuline{c(z\inv \gamma_{3}\inv x,x\inv \gamma_{2}\inv \gamma_{1}\inv)\,
	c(x\inv,\gamma_{2}\inv \gamma_{1}\inv)}
	\\&\qquad \text{(use cocycle condition  on underlined factors separately)}\\
	=&\,
 	{c(z\inv \gamma_{3}\inv,x)\,
 		{c(\gamma_{2}\inv \gamma_{1}\inv,\gamma_{1} \gamma_{2} x)\,}
 		\overline{c(z\inv \gamma_{3}\inv, \gamma_{2}\inv \gamma_{1}\inv)}\,}
 	\\&
	{c(z\inv \gamma_{3}\inv,\gamma_{2}\inv \gamma_{1}\inv)\,
		c(z\inv \gamma_{3}\inv x,x\inv)
		}
	\\\iff&\qquad \text{(cancel $c(\gamma_{2}\inv \gamma_{1}\inv,\gamma_{1} \gamma_{2} x)$ on both sides, bring conjugate to other side)}\\
	&
 	c(x\inv, x)\,c(z\inv \gamma_{3}\inv, \gamma_{2}\inv \gamma_{1}\inv)
	\\
\overset{!}{=}&\,
 		\uwave{c(z\inv \gamma_{3}\inv,x)\,}
		c(z\inv \gamma_{3}\inv,\gamma_{2}\inv \gamma_{1}\inv)\,
		\uwave{c(z\inv \gamma_{3}\inv x,x\inv)}
		\\&\qquad \text{(use cocycle condition  on underlined factors to arrive at a true statement)}\\
		=&\,
		c(z\inv \gamma_{3}\inv,\gamma_{2}\inv \gamma_{1}\inv)\,
		{c(x,x\inv)}.
\end{align*}
Note that, by  \cite[Lemma 2.1]{DGNRW:Cartan}, $c(x,x\inv)=c(x\inv,x)$ for any element of $\gpd $, so this Lemma does not rely on $c$ being symmetric on $\agpd $.
\end{proof}

\bibliographystyle{amsalpha} 

\newcommand{\etalchar}[1]{$^{#1}$}
\providecommand{\bysame}{\leavevmode\hbox to3em{\hrulefill}\thinspace}
\providecommand{\MR}{\relax\ifhmode\unskip\space\fi MR }
\providecommand{\MRhref}[2]{%
  \href{http://www.ams.org/mathscinet-getitem?mr=#1}{#2}
}
\providecommand{\href}[2]{#2}

\end{document}